\author{Andy Hammerlindl}
\address{School of Mathematical Sciences, Monash University, Victoria $3800$ Australia} \urladdr{ http://users.monash.edu.au/~ahammerl/}  \email{andy.hammerlindl@monash.edu}
\author{Yi Shi}
\address{School of Mathematical Sciences, Peking University, Beijing $100871$ China}
\email{shiyi@math.pku.edu.cn}
\title{Accessibility of derived-from-Anosov systems}
\date{\today}
\def\saveenum{\xdef\@savedenum{\the\c@enumi\relax}}
\def\resetenum{\global\c@enumi\@savedenum}
    \newcommand{\gra}{\operatorname{graph}}
    \newcommand{\bbR}{\mathbb{R}}
    \newcommand{\bbZ}{\mathbb{Z}}
    \newcommand{\bbT}{\mathbb{T}}
    \newcommand{\subof}{\subset}
    \newcommand{\sans}{\setminus}
    \newcommand{\Es}{E^s}
    \newcommand{\Ec}{E^c}
    \newcommand{\Eu}{E^u}
    \newcommand{\Ecu}{E^{cu}}
    \newcommand{\Ecs}{E^{cs}}
    \newcommand{\inv}{^{-1}}
    \newcommand{\invn}{^{-n}}
    \newcommand{\cF}{\mathcal{F}}
    \newcommand{\Fs}{\mathcal{F}^s}
    \newcommand{\Fc}{\mathcal{F}^c}
    \newcommand{\Fu}{\mathcal{F}^u}
    \newcommand{\Fcu}{\mathcal{F}^{cu}}
    \newcommand{\Fcs}{\mathcal{F}^{cs}}
    \newcommand{\Fus}{\mathcal{F}^{us}}
    \newcommand{\Fsig}{\mathcal{F}^{\sigma}}
    \newcommand{\cA}{\mathcal{A}}
    \newcommand{\As}{\mathcal{A}^s}
    \newcommand{\Ac}{\mathcal{A}^c}
    \newcommand{\Au}{\mathcal{A}^u}
    \newcommand{\Acu}{\mathcal{A}^{cu}}
    \newcommand{\Acs}{\mathcal{A}^{cs}}
    \newcommand{\Aus}{\mathcal{A}^{us}}
    \newcommand{\Asig}{\mathcal{A}^{\sigma}}
    \newcommand{\Ls}{L^{s}}
    \newcommand{\ep}{\epsilon}
    \newcommand{\lam}{\lambda}
    \newcommand{\Lam}{\Lambda}
    \newcommand{\Gam}{\Gamma}
    \newcommand{\gam}{\gamma}
    \newcommand{\al}{\alpha}
    \newcommand{\Per}{\operatorname{Per}}
    \newcommand{\delc}{\partial^c}
\numberwithin{equation}{section}
\newtheorem{thm}[equation]{Theorem}
\newtheorem{cor}[equation]{Corollary}
\newtheorem{lemma}[equation]{Lemma}
\newtheorem{claim}[equation]{Claim}
\newtheorem{prop}[equation]{Proposition}
\newtheorem{question}[equation]{\textbf{Question}}
\theoremstyle{remark}
\newtheorem*{remark} {\textbf{Remark}}
\begin{document}

\maketitle

%

\begin{abstract}
    This paper shows any
    non-Anosov
    partially hyperbolic diffeomorphism
    on the 3-torus which is homotopic to Anosov
    must be accessible. 
\end{abstract}



\section{Introduction} \label{sec:intro}

Let $M$ be a closed Riemannian manifold, and ${\rm Diff}^r(M)$ be the space which consists of all $C^r$-diffeomorphisms $(r\geq 1)$ of $M$ and is endowed with $C^r$-topology. We say $f\in{\rm Diff}^r(M)$ is partially hyperbolic if there exist a continuous $Df$-invariant splitting $TM=E^s\oplus E^c\oplus E^u$ and two continuous functions $\sigma,\mu:M\rightarrow\mathbb{R}$, such that $0<\sigma<1<\mu$ and
$$
\|Df(v^s)\|<\sigma(x)<\|Df(v^c)\|<\mu(x)<\|Df(v^u)\|
$$
for every $x\in M$ and unit vector $v^*\in E^*(x)$, for $*=s,c,u$. Let ${\rm PH}^r(M)$ denote the set consisting of all partially hyperbolic diffeomorphisms on $M$. It is obvious that ${\rm PH}^r(M)$ is an open set in ${\rm Diff}^r(M)$ with respect to the $C^r$-topology.

Robust transitivity is an important hallmark of chaotic dynamics. We say $f\in{\rm Diff}^1(M)$ is \emph{robustly transitive} if $f$ admits a $C^1$-neighborhood $\mathcal{U}$ such that every $g\in\mathcal{U}$ is robustly transitive. Due to the structural stability of Anosov systems, a transitive Anosov diffeomorphism is robustly transitive.  In fact, for a long time, Anosov diffeomorphisms were the only known examples of robustly transitive diffeomorphisms until the discovery of counterexamples by M.~Shub \cite[Page 39]{Shub} and R.~Ma\~n\'e \cite{Mane78}.

R. Ma\~n\'e constructed a non-hyperbolic robustly transitive diffeomorphism on $\bbT^3$.
His example, now known as Ma\~n\'e's example, was partially hyperbolic and homotopic to an Anosov automorphism. We say $f\in{\rm PH}^r(\bbT^3)$ is a \emph{derived-from-Anosov diffeomorphism} or a \emph{DA-diffeomorphism}, if it is homotopic to an Anosov automorphism. Here the Anosov automorphism is given by the linear part $f_*:\pi_1(\bbT^3)\rightarrow\pi_1(\bbT^3)$ of $f$.

Partially hyperbolic dynamics in dimension 3 and particularly on the 3-torus is now very well
understood. Derived-from-Anosov diffeomorphisms have been studied extensively.
For instance, every DA-diffeom\-orphism is dynamically coherent and leaf conjugate to its linear part \cite{BBI2,Hammerlindl,Po-jmd}.
There exists an open set of conservative DA-diffeom\-orphisms whose
center Lyapunov exponents have a different sign than their linear part
\cite{PT}, and
for such examples the center foliation is ``minimal yet measurable''
\cite{PTV14}.
The disintegration
of measure along the center foliation has been studied
in detail \cite{VY17}.
Every conservative partially hyperbolic DA-diffeomorph\-ism
is ergodic \cite{HamUres, gs20XXrigidity}.
In certain settings, it is further known to be Bernoulli
\cite{PTV18}.

However, a major question of transitivity for derived-from-Anosov diffeomorphisms remains open.
\begin{question}\label{ques}
	Is every partially hyperbolic derived-from-Anosov diffeomorphism on $\bbT^3$ transitive?
\end{question}
Since the DA-diffeomorphisms form an open subset of ${\rm PH}^1(\bbT^3)$, if every DA-diffeomorphism is transitive, then they are all robustly transitive as well.

For partially hyperbolic diffeomorphisms, one of the major tools for establishing transitivity is due to M. Brin \cite{brin1975trans}, showing that if every point is non-wandering and the system is accessible, then it is transitive. Here, we say $f\in{\rm PH}^r(M)$ is \emph{accessible} if for every $x,y\in M$, there exists a piecewise smooth curve from $x$ to $y$, such that each smooth piece is contained in a leaf of either the stable or unstable foliation of $f$. 

In light of Brin's result \cite{brin1975trans}, one approach to construct a non-transitive example of DA-diffeomorphisms might be to start with a linear Anosov automorphism, which is not accessible as it has an invariant $us$-foliation, and to do a pitchfork bifurcation on a periodic point producing a non-Anosov example. One might also hope that one could do this deformation in such a way that the
deformation "blows air" into the leaves of the original $us$-lamination. This DA-diffeomorphism would then not be accessible and in fact would have an invariant $us$-lamination. However in this paper, we prove a result showing that such an approach is impossible.

\begin{thm} \label{thm:main}
	Let $f : \bbT^3 \to \bbT^3$
	be a $C^{1+\al}$ partially hyperbolic derived-from-Anosov diffeomorphism.
	If $f$ is not Anosov, then it is accessible.
\end{thm}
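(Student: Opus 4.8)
The plan is to argue by contradiction. Suppose $f$ is a non-Anosov $C^{1+\al}$ partially hyperbolic DA-diffeomorphism which is \emph{not} accessible. By the theory recalled in the introduction, $f$ is dynamically coherent with invariant foliations $\Ws$, $\Wc$, $\Wu$, $\Wcs$, $\Wcu$, and is leaf conjugate to its linearization $A$. The stable and unstable accessibility classes partition $\bbT^3$; since $f$ is not accessible, there is at least one accessibility class that is not all of $\bbT^3$. The first step is to understand the structure of a non-open accessibility class: using a Journ\'e-type / Hirsch--Pugh--Shub argument together with the $C^{1+\al}$ hypothesis, a non-open accessibility class should be a $C^1$ (indeed $C^{1+}$) codimension-one $f$-invariant lamination $\Gamma$ whose leaves are $us$-leaves saturated by both $\Ws$ and $\Wu$. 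So the contradiction hypothesis produces an invariant $us$-lamination $\Gamma \subsetneq \bbT^3$, exactly the object the introduction says we want to rule out.

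The second step is to pass to the universal cover $\bbR^3$ and compare with the linear picture. The linear Anosov map $A$ on $\bbT^3$ has a splitting $E^s_A \oplus E^c_A \oplus E^u_A$ into linear subspaces; its $us$-foliation $\mathcal W^{us}_A$ has leaves that are affine planes, and these planes are \emph{irrational} in $\bbT^3$ (they are dense), because the eigendirections of a hyperbolic toral automorphism with irreducible characteristic polynomial are totally irrational. Lifting $\Gamma$ to $\tilde\Gamma\subset\bbR^3$ and lifting the leaf conjugacy $h$ (the semiconjugacy from $f$ to $A$, which moves points a bounded distance), each leaf of $\tilde\Gamma$ is within bounded Hausdorff distance of a leaf of $\widetilde{\mathcal W^{us}_A}$, i.e. of an affine plane $P$. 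The key quasi-isometry facts for DA-systems (global product structure, the fact that $\Ws$- and $\Wu$-leaves in $\bbR^3$ are quasi-isometrically embedded and coarsely parallel to $E^s_A$, $E^u_A$) then force each leaf $L$ of $\tilde\Gamma$ to be a properly embedded plane coarsely equal to the linear plane $P$, and to project down to a complete leaf of $\Gamma$ in $\bbT^3$. Since $P$ projects to a dense immersed plane in $\bbT^3$, the leaf of $\Gamma$ through any point is dense in $\bbT^3$; because $\Gamma$ is closed, this gives $\Gamma = \bbT^3$, contradicting $\Gamma \subsetneq \bbT^3$.

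There are two alternatives to handle in this last step, and they are the main obstacle. First, one must rule out that $\Gamma$ could be a single compact leaf or a sublamination with compact leaves — but a $us$-leaf is noncompact (it contains a whole unstable leaf, which is noncompact and unbounded in $\bbR^3$), and more to the point a closed $f$-invariant set carrying a $us$-lamination must, by minimality-type arguments on the torus (or by the density argument above), be everything or empty. Second, and more seriously, the step "$\Gamma$ closed $+$ one dense leaf $\Rightarrow \Gamma=\bbT^3$" uses that the lamination is genuinely $us$-saturated: a leaf of $\Gamma$ that is dense and is saturated by both stable and unstable holonomy must pick up a neighborhood of each of its points, so $\Gamma$ is open, hence clopen, hence all of $\bbT^3$; carrying this out cleanly requires the local product structure of $\Ws$ and $\Wu$ and a little care that accessibility classes are saturated in the strong sense. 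I expect the bulk of the work, and the place where the $C^{1+\al}$ hypothesis is actually used, to be in Step one — establishing that a non-open accessibility class is a bona fide codimension-one invariant lamination — since without regularity the accessibility classes need not be manifolds, and one must invoke the Hammerlindl--Potrie structure theory for DA-diffeomorphisms on $\bbT^3$ to control them.
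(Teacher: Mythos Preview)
There is a genuine gap. Your argument, at best, shows that if $f$ is not accessible then the lamination $\Gamma$ of non-open accessibility classes equals all of $\bbT^3$, i.e.\ that $f$ admits an invariant $us$-\emph{foliation}. But this is \emph{not} a contradiction to non-accessibility: the linear map $A$ itself is non-accessible precisely because it has a $us$-foliation, and there is no a priori reason a non-Anosov DA map could not also have one. Your sentence ``the contradiction hypothesis produces an invariant $us$-lamination $\Gamma \subsetneq \bbT^3$'' is where the error enters: non-accessibility gives $\Gamma \ne \emptyset$, not $\Gamma \ne \bbT^3$. (Your ``$\Gamma$ is open, hence clopen'' step is also confused: a dense $2$-dimensional leaf is not open in $\bbT^3$; what you can conclude from density and closedness is exactly $\Gamma = \bbT^3$, and no more.) So the entire content of the theorem --- that a $us$-lamination forces $f$ to be Anosov --- is missing from your outline.

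The paper's proof addresses exactly this. It passes to the unique minimal sublamination $\Lambda \subset \Gamma$, and the substantive work is a \emph{rigidity} argument showing that all periodic points in $\Lambda$ have the same center Lyapunov exponent $\le \lam^c(A) < 0$. This uses: the semiconjugacy $h$ mapping stable/unstable leaves of $f$ in $\Lambda$ homeomorphically to linear stable/unstable leaves (your density step is a weak shadow of this, and establishing it for \emph{stable} leaves already requires a nontrivial planar argument, \cref{prop:curves}); Pesin theory to approximate ergodic measures on $\Lambda$ by periodic points; and a quantitative comparison of center holonomies along long $su$-paths, exploiting the $C^{1+}$ regularity of stable and unstable holonomies inside center-stable and center-unstable leaves. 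Once all center exponents on $\Lambda$ coincide and are negative, $\Lambda$ is a uniformly hyperbolic attractor, and a further volume/recurrence argument forces $\Lambda = \bbT^3$, hence $f$ is Anosov. The $C^{1+\al}$ hypothesis is used for Pesin theory and for the Pugh--Shub--Wilkinson $C^{1+}$ holonomy, not for the existence of the lamination $\Gamma$ (which holds in $C^1$).
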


This theorem shows that any version of constructing Ma\~n\'e's example \cite{Mane78} must be accessible.
Moreover, since every Anosov diffeomorphism on $\bbT^3$ is transitive, Brin's work \cite{brin1975trans} has the following corollary.

\begin{cor}\label{cor:accessibility}
	Let $f : \bbT^3 \to \bbT^3$
	be a $C^{1+\al}$ partially hyperbolic derived-from-Anosov diffeomorphism.
    If the non-wandering set of $f$ is $\bbT^3$, then $f$ is transitive.
\end{cor}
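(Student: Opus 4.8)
Corollary~\ref{cor:accessibility} reduces immediately to Theorem~\ref{thm:main}: an Anosov diffeomorphism of $\bbT^3$ is transitive, and if $f$ is not Anosov then it is accessible by Theorem~\ref{thm:main}, hence transitive when $\Omega(f)=\bbT^3$ by Brin's theorem~\cite{brin1975trans}. So the work is in Theorem~\ref{thm:main}, which I would prove by contraposition: assume $f$ is not accessible and deduce that $f$ is Anosov. Write $A$ for the linear part of $f$, with real eigenvalues $0<|\lambda_s|<1<|\lambda_c|<|\lambda_u|$ (after replacing $f$ by $f\inv$ if necessary, so that $|\lambda_c|>1$). I will use that $f$ is dynamically coherent and leaf conjugate to $A$~\cite{BBI2,Hammerlindl,Po-jmd}, with invariant foliations $\Wc,\Wcs,\Wcu$, and that in the universal cover all the invariant foliations of $f$ are quasi-isometric~\cite{Hammerlindl}, so that the associated global product structures hold.

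The first step is to build an $f$-invariant foliation tangent to $\Es\oplus\Eu$. By the structure theory of accessibility classes for partially hyperbolic systems with one-dimensional center (Rodriguez Hertz--Rodriguez Hertz--Ures), the union $\Gamma$ of the non-open accessibility classes is closed, $f$-invariant, and laminated by these classes, each a two-dimensional $C^1$ surface tangent to $\Es\oplus\Eu$ and --- being an accessibility class --- saturated by $\Ws$ and $\Wu$. Since $\bbT^3$ is connected, non-accessibility means $\Gamma\neq\emptyset$. Now a DA diffeomorphism of $\bbT^3$ has a minimal strong foliation (one of $\Ws,\Wu$ has all leaves dense), so any leaf $L$ of $\Gamma$ contains a dense leaf of it and $\overline L=\bbT^3$; as $\Gamma$ is closed, $\Gamma=\bbT^3$. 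Thus $\Es\oplus\Eu$ is tangent to an $f$-invariant foliation $\cF^{su}$. Its leaves, being saturated by the quasi-isometric foliations $\Ws,\Wu$, are quasi-isometrically embedded in the universal cover and lie within bounded Hausdorff distance of planes parallel to $\Es_A\oplus\Eu_A$; moreover $\Wc$ and $\cF^{su}$ satisfy a global product structure (every center leaf meets every leaf of $\widetilde{\cF^{su}}$ in exactly one point).

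The remaining --- and essential --- step is to show that a DA diffeomorphism of $\bbT^3$ carrying such an invariant $\cF^{su}$ is Anosov. Since $\Es,\Eu$ are uniformly hyperbolic this amounts to $\Ec$ being uniformly expanded, and for DA diffeomorphisms this is equivalent to: no center leaf contains two distinct periodic points. (If $\Ec$ were not uniformly expanded there would be a periodic point with non-positive center exponent; and since $\tf$ expands the $\Ec_A$-direction by the factor $\lambda_c>1$ at large scale, an $\tf^k$-invariant center leaf carrying such a point also carries a center-repelling periodic point.) So suppose, for contradiction, that a center leaf $\ell$ is $f^k$-invariant and carries periodic points $p\neq q$; lift to an $\tf^k$-invariant center leaf $\tilde\ell$ carrying fixed points $\tp\neq\tq$ of $\tf^k$. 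By the global product structure, $x\mapsto\widetilde{\cF^{su}}(x)$ is a homeomorphism from $\tilde\ell$ onto the one-dimensional leaf space $\mathcal{S}$ of $\widetilde{\cF^{su}}$, so $\tf^k$ induces on $\mathcal{S}\cong\bbR$ a homeomorphism with (at least) the two distinct fixed points coming from $\tp,\tq$. On the other hand, sending each leaf of $\widetilde{\cF^{su}}$ to the unique parallel plane within bounded distance of it is a monotone surjection $\mathcal{S}\to\bbR^3/(\Es_A\oplus\Eu_A)\cong\bbR$ intertwining the induced homeomorphism with multiplication by $\lambda_c^k$, which has a single fixed point. The contradiction is complete once this surjection is known to be injective, i.e.\ once one knows that at most one leaf of $\widetilde{\cF^{su}}$ lies within bounded distance of any given plane.

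This injectivity is where I expect the real difficulty to lie. It is a genuine rigidity statement: the presence of a full $su$-foliation must obstruct the ``pitchfork'' splitting of a periodic point along a center leaf that underlies Ma\~n\'e-type examples --- which is consistent, since those examples are accessible and so carry no $\cF^{su}$. Soft inputs (Lefschetz indices, quasi-isometry estimates alone) are blind to this splitting; proving the injectivity should instead require controlling how the stable and unstable leaves sitting inside distinct center-stable, respectively center-unstable, leaves are linked together by $\cF^{su}$, presumably through bounded-distortion estimates along the uniformly hyperbolic leaves of $\cF^{su}$ and in the spirit of the $su$-integrability rigidity of Gan and Shi~\cite{gs20XXrigidity}. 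The earlier steps, by contrast, only assemble known structure theory, so the bulk of the proof is concentrated in this final rigidity.
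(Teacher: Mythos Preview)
Your reduction of the corollary to Theorem~\ref{thm:main} via Brin's criterion is correct and is exactly how the paper derives it. The difficulties are all in your sketch of Theorem~\ref{thm:main}, and there is a genuine gap in Step~1.

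You assert that ``a DA diffeomorphism of $\bbT^3$ has a minimal strong foliation.'' This is not known in the non-volume-preserving setting. In the volume-preserving case one first shows (as in \cite{HamUres}) that the Franks semiconjugacy $h$ is a homeomorphism, and minimality of $\Wu$ then follows from minimality of $\Au$; but without that, $h$ may collapse center arcs, and the argument you have in mind (pushing forward a closed $\Wu$-saturated set by $h$ and using minimality of $\Au$) only yields $h(K)=\bbT^3$, not $K=\bbT^3$. So you cannot conclude $\Gamma=\bbT^3$ at the outset, and the rest of your outline---which assumes a global $su$-foliation---does not get off the ground. Note also that the ``injectivity'' you flag as the hard part of Step~2 is precisely the statement that $h$ is a homeomorphism, so your two steps are really asking for the same thing twice.

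The paper takes a different route that sidesteps this circularity. It never assumes $\Gamma=\bbT^3$; instead it works with the unique \emph{minimal} $us$-sublamination $\Lambda\subset\Gamma$, which can be described concretely via the endpoints of the fibers $h^{-1}(v)$. The semiconjugacy is shown to carry $us$-leaves in $\Lambda$ bi-H\"older onto linear $us$-leaves, and the core technical argument (Proposition~\ref{prop:samelyap}, very much in the Gan--Shi spirit you anticipate) compares center growth along stable holonomies to force all periodic points in $\Lambda$ to share a single center exponent $\le\lambda^c(A)<0$. This makes $\Lambda$ a uniformly hyperbolic attractor; a short wandering/volume argument then shows a proper attractor would produce infinitely many disjoint balls of definite radius, so $\Lambda=\bbT^3$ and $f$ is Anosov. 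The equality $\Gamma=\bbT^3$ and the injectivity of $h$ thus emerge only at the very end, as consequences rather than inputs.
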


Following this corollary, Question \ref{ques} is reduced to showing that the non-wand\-ering set of a DA-diffeomorphism is the whole torus $\bbT^3$. 
Such a question remains challenging. 
For instance, a well-known open problem is whether the time-one maps of geodesic flows on closed surfaces with constant negative curvature must be robustly transitive \cite{wilkinson2010conservative}. These maps are partially hyperbolic and stably accessible. However, it is unknown if it holds robustly that the non-wandering set is the whole manifold \cite{bonatti-guelman}.
In the specific case of derived-from-Anosov diffeomorphisms on $\bbT^3$,
see \cite{Po-jdde} for a detailed discussion of transitivity
and the non-wandering set.

Applying Theorem 5.1 of  \cite{gs20XXrigidity}, Theorem \ref{thm:main} has the following corollary. 

\begin{cor}
	Let $f : \bbT^3 \to \bbT^3$
	be a $C^{1+\al}$ partially hyperbolic derived-from-Anosov diffeomorphism, then either $f$ is accessible, or
	it is Anosov and every periodic point of $f$ has the same center Lyapunov exponent with its linear part $f_*$.
\end{cor}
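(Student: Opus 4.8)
The plan is to obtain this corollary as a direct combination of Theorem~\ref{thm:main} with Theorem~5.1 of \cite{gs20XXrigidity}, so the only work is to set up the case split and to check that the hypotheses match. First I would split on accessibility: if $f$ is accessible the first alternative holds and there is nothing to prove, so assume from now on that $f$ is \emph{not} accessible. Theorem~\ref{thm:main}, in contrapositive form, then forces $f$ to be Anosov, since a non-Anosov $C^{1+\al}$ partially hyperbolic derived-from-Anosov diffeomorphism of $\bbT^3$ is necessarily accessible. Thus $f$ is a $C^{1+\al}$ partially hyperbolic \emph{Anosov} derived-from-Anosov diffeomorphism of $\bbT^3$ that fails to be accessible, and it remains only to pin down its periodic center exponents.

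Second, I would feed this into the rigidity statement. Since the linear part $f_*$ is hyperbolic with three distinct eigenvalue moduli $|\lambda_1|<1<|\lambda_2|<|\lambda_3|$ (or $|\lambda_1|<|\lambda_2|<1<|\lambda_3|$), the center bundle $\Ec$ is one-dimensional, and at a periodic point $p$ of period $n$ the center Lyapunov exponent $\tfrac{1}{n}\log\|Df^n|_{\Ec(p)}\|$ is a well-defined real number. This is exactly the quantity controlled by Theorem~5.1 of \cite{gs20XXrigidity}, which, applied to an Anosov partially hyperbolic DA-diffeomorphism of $\bbT^3$ that is not accessible, asserts that this exponent equals $\log|\lambda_2|$ for every periodic $p$ — that is, it agrees with the corresponding center exponent of the linear part $f_*$. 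Combined with the previous paragraph, this gives the stated dichotomy.

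Since the corollary is a bookkeeping consequence of two already-established results, there is no genuine analytic obstacle; the one point requiring care is the interface with \cite{gs20XXrigidity}. If Theorem~5.1 there is phrased under an a priori accessibility hypothesis, under higher regularity, or in terms of smooth conjugacy of $f$ to $f_*$ rather than equality of periodic center exponents, then the remaining step is a routine translation: Theorem~\ref{thm:main} has already eliminated the non-Anosov branch, so only the ``not accessible $\Rightarrow$ periodic center exponents rigid'' content for Anosov DA-diffeomorphisms on $\bbT^3$ is needed, which is precisely what is being cited.
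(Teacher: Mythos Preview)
Your proposal is correct and matches the paper's approach exactly: the paper states the corollary as an immediate consequence of Theorem~\ref{thm:main} combined with Theorem~5.1 of \cite{gs20XXrigidity}, with no further argument. Your case split and invocation of the contrapositive of Theorem~\ref{thm:main} followed by the rigidity result is precisely the intended one-line deduction, spelled out in slightly more detail than the paper bothers to give.
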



Another major motivation for Theorem \ref{thm:main} is the study of accessibility in its own right.
M.~Grayson, C.~Pugh, and M.~Shub \cite{grayson1994stably} first introduced the concept of accessibility in the volume-preserving setting
as a tool for establishing ergodicity. However, accessibility has many applications in the non-volume-preserving setting such as the result of Brin \cite{brin1975trans} mentioned above. C.~Pugh and M.~Shub conjectured that accessibility is an open and dense property among partially hyperbolic diffeomorphisms, volume preserving or not.
This property has been established in a number of settings, including the case of one-dimensional center \cite{BHHU} (which always holds when the diffeomorphism is defined on a three-dimensional manifold).

F.~Rodriguez Hertz, J.~Rodriguez Hertz, and R.~Ures \cite{HHU-JMD-08} conjectured that the existence of a torus tangent to the $us$-direction is the unique obstruction to accessibility of partially hyperbolic diffeomorphisms in dimension 3.
Theorem \ref{thm:main} handles the last unknown case for accesssibility on the 3-torus. In fact, when combined with results in \cite{Ha17}, it gives a complete description of all possible accessibility classes for any 3-manifold with solvable fundamental group.

\begin{cor}\label{cor:solv}
	Let $f$ be a partially hyperbolic diffeomorphism on a 3-manifold with solvable fundamental group,
	then exactly one of the following holds:
	\begin{enumerate}
		\item $f$ is accessible;
		\item $f$ has a minimal invariant $us$-foliation and is an Anosov diffeomorphism on $\bbT^3$;
		\item $f$ has a 2-torus tangent to $E^s\oplus E^u$.
		
	\end{enumerate}
\end{cor}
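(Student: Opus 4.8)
The plan is to derive Corollary~\ref{cor:solv} from Theorem~\ref{thm:main} together with the classification of accessibility classes of partially hyperbolic diffeomorphisms on solvable $3$-manifolds carried out in \cite{Ha17}. The first step is mutual exclusivity, which is essentially formal. An invariant $us$-foliation, and a $2$-torus tangent to $E^s\oplus E^u$, are each saturated by stable and by unstable leaves, hence are unions of accessibility classes; since neither is all of $M$ (a foliation has more than one leaf, and a $2$-torus is a proper subset of the $3$-manifold $M$), each of them obstructs accessibility, so alternatives (2) and (3) are incompatible with (1). A minimal foliation has no compact leaf, whereas a $2$-torus tangent to $E^s\oplus E^u$ is the closure of a leaf of any invariant $us$-foliation, so (2) and (3) cannot hold simultaneously. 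It therefore remains only to show that at least one of the three alternatives always holds.

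For the existence I would first reduce to the case $M=\bbT^3$ with $f$ derived-from-Anosov. The $3$-manifolds with solvable fundamental group admitting a partially hyperbolic diffeomorphism are $\bbT^3$ together with certain nilmanifolds and solvmanifolds; for these last, \cite{Ha17} shows directly that $f$ is accessible or has a $2$-torus tangent to $E^s\oplus E^u$, so alternative (1) or (3) holds (alternative (2) is vacuous there, since such manifolds admit no Anosov diffeomorphism). So we may take $M=\bbT^3$. If the linear part $A$ of $f$ is not hyperbolic, then $A$ is a partially hyperbolic matrix with an eigenvalue of modulus one, whose eigenline is rational and complementary to the rational plane spanned by the stable and unstable eigenvectors of $A$; using dynamical coherence and the structure of partially hyperbolic diffeomorphisms of $\bbT^3$ \cite{Hammerlindl, Po-jmd}, one obtains an $f$-invariant $2$-torus tangent to $E^s\oplus E^u$, so alternative (3) holds. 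Hence it suffices to treat a $C^{1+\al}$ partially hyperbolic DA-diffeomorphism $f$ of $\bbT^3$.

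In this remaining case, Theorem~\ref{thm:main} does the work. If $f$ is not Anosov, Theorem~\ref{thm:main} gives that $f$ is accessible, which is alternative (1); this is precisely the case supplied by the present paper. If $f$ is Anosov, I would appeal to the description of accessibility classes on $\bbT^3$ from \cite{Ha17}: when $f$ is not accessible the non-open accessibility classes form an $f$-invariant $us$-lamination, and on $\bbT^3$ such a lamination is, by \cite{Ha17}, either a foliation all of whose leaves are dense — alternative (2), together with the standing hypothesis that $f$ is Anosov on $\bbT^3$ — or it contains a compact leaf, which must be a $2$-torus tangent to $E^s\oplus E^u$, i.e.\ alternative (3). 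This exhausts all cases, so exactly one alternative holds. I expect no substantive difficulty internal to this deduction: the real content is Theorem~\ref{thm:main} and the classification of \cite{Ha17}, both used as black boxes. The one step I would want to write out carefully is the reduction in the second paragraph, namely that a modulus-one eigenvalue of $A$ genuinely produces an $f$-invariant surface tangent to $E^s\oplus E^u$, and not merely an $A$-invariant subtorus — this is exactly where dynamical coherence and \cite{Po-jmd} are needed.
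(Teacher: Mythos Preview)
Your outline matches the paper's own account: the corollary is deduced from Theorem~\ref{thm:main} together with \cite{Ha17}, and the paper does not spell out the argument further. Your mutual-exclusivity argument is fine.

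The step you yourself flag as delicate is, however, wrong as written. When the linear part $A$ has an eigenvalue of modulus one, you claim that dynamical coherence and \cite{Hammerlindl, Po-jmd} produce an $f$-invariant $2$-torus tangent to $E^s\oplus E^u$, forcing alternative~(3). This is false: there exist accessible partially hyperbolic diffeomorphisms of $\bbT^3$ with non-hyperbolic linear part (accessibility being $C^1$-open and dense \cite{BHHU}), and such examples have no $us$-torus. Dynamical coherence gives a center foliation and a leaf conjugacy, not a $us$-surface; the rational $us$-plane of $A$ need not survive as a compact $us$-leaf for $f$. The correct statement, and this is precisely what \cite{Ha17} supplies in the non-DA cases, is the dichotomy that either $f$ is accessible or the $us$-lamination contains a compact leaf, so alternative~(1) or~(3) holds. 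That suffices; you should cite \cite{Ha17} here rather than attempt a direct construction. In the DA case, note also that the proof of Theorem~\ref{thm:main} in this paper already yields more than the theorem's statement: assuming non-accessibility, \cref{prop:whole-mfd} shows the minimal $us$-sublamination equals $\bbT^3$, giving alternative~(2) directly without a further appeal to \cite{Ha17}.
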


See \cite{Ha17} for further details on the accessibility classes in the case of the third item above.
This corollary gives a positive answer in the case of solvable fundamental group
to the following question which still remains open for general 3-manifolds.

\begin{question}
    Suppose f is a partially hyperbolic diffeomorphism with one-dimen\-sional
    center and 
	$\emptyset\neq \Gamma(f)\neq M$ is an $f$-invariant lamination tangent to $E^s\oplus E^u$.
    Does $\Gamma(f)$ have a compact leaf?
\end{question}

See Section 4 of the survey paper \cite{RHRHU-survey}
(and Problem 22 in particular)
for further details.

\smallskip

The proof of Theorem \ref{thm:main} extends techniques first developed
in \cite{HamUres} and \cite{gs20XXrigidity} to prove ergodicity
for DA-diffeomorphisms in the volume-preserving setting.
However, there is a key difference in the overall approach to the proof.
For any DA-diffeomorphism, there is a semiconjugacy, discovered by Franks
\cite{Franks1}, from the non-linear system to its linear part.
One of the first steps in \cite{HamUres} is to use the volume-preserving
assumption to show that this semiconjugacy is in fact a true conjugacy;
that is, a homeomorphism.
In the setting of the current paper, we do not assume the system is volume
preserving and so we must always handle the possibility that the semiconjugacy
is not bijective. This significantly complicates a number of parts of the
proof. For instance, see \cref{prop:curves} and its use in the proof of
\cref{prop:stos} below.

\vskip2mm

\noindent{\bf Organization of the paper:} In Section \ref{sec:univ}, we study the semiconjugacy and the lift of invariant foliations to the universal cover $\bbR^3$. In Section \ref{sec:lami}, we prove a series of properties of the minimal invariant $us$-laminations on $\bbT^3$. In Section \ref{sec:rigidity}, we prove the main theorem.

\section{Laminations on the universal cover} \label{sec:univ} 

This section analyzes the dynamics when lifted to the universal cover
$\bbR^3$. Later sections will use the results proved here
in order to analyse the original system on the 3-torus.
Before looking at the partially hyperbolic system,
we first establish a property for curves on $\bbR^2$
which will be of use later.

\begin{prop} \label{prop:curves}
    Let $S$ be a collection of curves in $\bbR^2$
    and $X$ be a dense subset of $\bbR^2$ with
    the following properties:
    \begin{enumerate}
        \item each curve in $S$ is the graph
        $\gra(g) = \{ (x,g(x)) : x \in \bbR \}$
        of a continuous function $g : \bbR \to \bbR;$
        \item
        no two curves topologically cross; and
        \item
        if $\gam$ is a curve in $S$ and $(x_0,y_0) \in X,$
        then the translation $\gam + (x_0,y_0)$
        is also a curve in $S.$
    \end{enumerate}
    Then the curves in $S$ are straight lines
    and all have the same slope.
\end{prop}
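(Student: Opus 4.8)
The plan is to prove the statement in two stages: first, that each individual curve in $S$ is a straight line; second, that all these lines share the same slope. Property (3) will enter only in the first stage, property (2) in both, and property (1) throughout (it is what lets us speak of the defining functions).

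\smallskip

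\emph{Stage 1.} Fix a curve $\gam = \gra(g) \in S$. For every $(a,b) \in X$, property (3) gives $\gam + (a,b) = \gra\big(g(\cdot - a) + b\big) \in S$, and then property (2) forces the continuous function $x \mapsto g(x) - g(x-a) - b$ to have constant sign on $\bbR$. The key fact I want to extract from this is: \emph{for every $a \in \bbR$ the function $x \mapsto g(x) - g(x-a)$ is constant.} To see this I would argue by contradiction: if some $a_0$ admitted $x_1, x_2$ with $g(x_1)-g(x_1-a_0) < g(x_2)-g(x_2-a_0)$, pick $b_0$ strictly between these two values; by continuity of $g$ there is an open box $U \ti V \ni (a_0,b_0)$ on which $g(x_1)-g(x_1-a) < b < g(x_2)-g(x_2-a)$, and density of $X$ supplies a point $(a,b) \in X \cap (U\ti V)$ for which $x \mapsto g(x)-g(x-a)-b$ is negative at $x_1$ and positive at $x_2$ — so $\gam$ and $\gam+(a,b)$ cross topologically, contradicting (2). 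Granting the claim, for each $a$ there is $c(a)$ with $g(x+a) = g(x)+c(a)$ for all $x$; taking $x=0$ shows $c(a) = g(a)-g(0)$, so $x \mapsto g(x)-g(0)$ is a continuous solution of Cauchy's functional equation and hence linear. Thus $g(x) = \lam x + g(0)$ for some $\lam \in \bbR$, i.e.\ $\gam$ is a straight line.

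\smallskip

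\emph{Stage 2.} If two curves $\gra(\lam_1 x + c_1)$ and $\gra(\lam_2 x + c_2)$ in $S$ had $\lam_1 \ne \lam_2$, their difference $(\lam_1-\lam_2)x + (c_1-c_2)$ would be a nonconstant affine function, which takes both signs; so the two graphs cross topologically, contradicting (2). Hence all curves in $S$ have the same slope.

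\smallskip

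The hard part is the rigidity fact in Stage 1 — upgrading ``no $X$-translate of $\gam$ crosses $\gam$'' to ``$g(x)-g(x-a)$ does not depend on $x$, for every $a$''. The neighbourhood-plus-density argument sketched above is the crux; it has the side benefit of only ever evaluating $g$ at two points, which sidesteps any worry about $\sup_x\big(g(x)-g(x-a)\big)$ being infinite. Everything after that is the classical fact that continuous additive functions are linear, together with the triviality that two lines of different slopes must cross.
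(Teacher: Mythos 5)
Your proof is correct, and it takes a genuinely different route from the paper's. The paper first passes to the closure $\bar S$ of $S$ in the compact-open topology, observes that $\bar S$ inherits the no-crossing property and is invariant under \emph{all} translations of $\bbR^2$ (since $X$ is dense), and then, for a curve $\gra(g) \in \bar S$ through the origin, uses the perturbed translates $g_n(x) = g(x-a)+g(a)+\tfrac{1}{n}$ to squeeze out the Cauchy functional equation $g(a)+g(b)=g(a+b)$; the common slope follows because the translates of $\gra(g)$ produce a linear foliation that no other curve of $\bar S$ can cross. You instead stay entirely within $S$ and $X$: for a fixed curve $\gra(g)$, the rigidity claim that $x \mapsto g(x)-g(x-a)$ is constant for every $a$ is established by a two-point continuity-plus-density argument (if the difference took two distinct values at $x_1,x_2$, perturbing $(a_0,b_0)$ slightly within a box and using density of $X$ would produce an $X$-translate crossing $\gra(g)$ by the intermediate value theorem). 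This gives the same Cauchy equation without ever invoking the compact-open closure, and your Stage~2 (two lines of distinct slope must cross) is also more direct than the paper's foliation argument. The trade-off is roughly: the paper front-loads the topological work into one closure step and then has exact translates freely available; you keep the topology trivial at the cost of running a small $\ep$-perturbation argument at each application of property (3). Your version is arguably more elementary and also sidesteps the ``one can show the no-crossing condition passes to $\bar S$'' step that the paper leaves to the reader.
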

\begin{remark}
    If $\gra(g_1)$, $\gra(g_2)$ are 
    curves in $S,$ 
    then the condition of no topological crossings
    implies that either
    $g_1(x) \le g_2(x)$ holds for all $x \in \bbR$
    or
    $g_2(x) \le g_1(x)$ holds for all $x \in \bbR.$
\end{remark}

\begin{proof}
    Consider the closure of $S$ in the compact-open topology.
    That is, the graph of a function $g : \bbR \to \bbR$
    belongs to $\bar S$
    if and only if there is a sequence $\{\gra(g_n)\}$ in $S$
    such that 
    $g_n|_K$ converges uniformly to $g|_K$
    on every compact subset $K$ of $\bbR.$
    One can show that the no crossing condition on $S$
    implies a no crossing condition on $\bar S.$
    If $\gam_n \to \gam$ in the compact-open topology
    and $(x_0, y_0) \in X,$
    then the translates $\gam_n + (x_0, y_0)$
    converge to $\gam + (x_0, y_0)$
    in the compact-open topology.
    Hence, $\bar S$ is invariant under all translates
    $(x_0, y_0) \in X.$
    For $\gam \in \bar S$ and any point $(x_0, y_0) \in \bbR^2,$
    let $(x_n, y_n)$ be a sequence in $X$ converging to $(x_0, y_0).$
    Then $\gam + (x_n, y_n)$ converges to $\gam + (x_0, y_0)$
    in the compact-open topology
    and so $\gam + (x_0, y_0) \in \bar S.$

    Now knowing that
    $\bar S$ is invariant under \emph{all} translations in $\bbR^2,$
    we can show it is linear.
    Indeed,
    let $\gra(g)$ be a curve in $\bar S$
    passing though the origin so that $g(0) = 0.$
    For $a, b \in \bbR$,
    define functions $g_n : \bbR \to \bbR$ by
    $g_n(x) = g(x - a) + g(a) + \tfrac{1}{n}.$
    Then
    $g_n(a) > g(a)$ and the no crossing condition imply that
    \[
        g(a) + g(b) + \tfrac{1}{n} = g_n(a + b) \ge g(a + b)
    \]
    for all $n.$
    A similar argument shows that
    $g(a) + g(b) - \tfrac{1}{n} \le g(a + b)$
    for all $n,$
    and so $g(a) + g(b) = g(a + b).$
    As $g$ is continuous and additive, it is linear.
    The translates of $\gra(g)$ produce a linear foliation
    on all of $\bbR^2$ that no other curve of $\bar S$ can cross.
    This implies that every curve of $\bar S$ is linear
    and of the same slope.
\end{proof}

With \cref{prop:curves} established, we now consider
the dynamics.
Let $f : \bbT^3 \to \bbT^3$ be partially hyperbolic and homotopic
to an Anosov diffeomorphism.
Lift $f$ to a map on the universal cover.
All of the analysis in this section will be on $\bbR^3,$
and so, by a slight abuse of notation,
from now until the end of the section
we only use $f$ to denote the lifted map.
This lift $f : \bbR^3 \to \bbR^3$ is at finite distance
from a hyperbolic linear map $A : \bbR^3 \to \bbR^3.$ Here $A=f_*$ is the linear part of $f$.
Up to replacing $f$ by its inverse, we may assume the center direction
of $A$ is contracting.
That is, the logarithms of the eigenvalues of $A$ satisfy
\[
    \lam^s(A) < \lam^c(A) < 0 < \lam^u(A).
\]
Many properties have been established for $f,$
first in the absolutely partially hyperbolic case
\cite{BI, BBI2, ham-thesis}
and then extended to the case of pointwise partial
hyperbolicity \cite{hp2014pointwise}.

There are unique invariant foliations
tangent to the bundles
$\Eu$, $\Es$, $\Ecs$, $\Ecu$, and $\Ec$ of $f.$
Denote these foliations by
$\Fu$, $\Fs$, $\Fcs$, $\Fcu$, and $\Fc$.
For the linear map $A$,
we adopt the notation used in
\cite{hp2015classification, hp2018survey}
and write
$\Au$, $\As$, $\Acs$, $\Acu$, $\Ac$, and $\Aus$
for the invariant linear foliations of $A$.
For both $f$ and $A$, all of these foliations
have quasi-isometrically embedded leaves
\cite[Theorem 3.5]{hp2014pointwise}
\cite[Proposition 2.6]{ham-thesis}.
That is,
there is $Q > 1$ such that if $x$ and $y$
lie on the same leaf of the foliation,
then
\begin{math}
    d_{\cF}(x,y) < Q \| x - y \| + Q
\end{math}
where $ \| x - y \| $ is the usual distance in $\bbR^3$
and $d_{\cF}(x,y)$ is distance measured along the leaf.
In this section, we use $d_u, d_c, d_s$
to denote distance measured along leaves
associated to the non-linear system $f.$

The foliations have \emph{global product structure}
\cite[Proposition 2.15]{ham-thesis}.
That is, for $x,y \in \bbR^3,$
the following pairs of sets intersect in a unique point:
\begin{enumerate}
    \item $\Fcs(x)$ with $\Fu(y),$
    \item
    $\Fcu(x)$ with $\Fc(y),$
    \item
    $\Fc(x)$ with $\Fu(y)$ if $x \in \Fcu(y),$ and
    \item
    $\Fc(x)$ with $\Fs(y)$ if $x \in \Fcs(y).$
\end{enumerate}
There is a \emph{semiconjugacy} \cite{Franks1},
a continuous surjective map
$h : \bbR^3 \to \bbR^3$ which satisfies
$h(f(x)) = A(h(x))$ and $h(x + z) = h(x) + z$
for all $x \in \bbR^3$ and $z \in \bbZ^3.$
Further, $h$ is a finite distance from the identity map on $\bbR^3.$

For the center, center-stable, and center-unstable
foliations,
$h$ defines a bijection on the spaces of leaves \cite[\S $3$]{ham-thesis}.
That is,
if $x \in \bbR^3$ and $v = h(x),$ then
$h(\Fc(x)) = \Ac(v)$ and $h \inv(\Ac(v)) = \Fc(x).$
Similar equalities hold for $cs$ and $cu$ in place of $c$.
The restriction $h|_{\Fc(x)} : \Fc(x) \to \Ac(v)$
is a continuous surjective map,
but in general it is not a homeomorphism.
Throughout this section,
we use the letter $v$ to denote a point in $\bbR^3$
associated to the linear dynamics of $A$
and we use $x$ and $y$ to denote points associated to the non-linear
dynamics of $f$.

We assume all of the foliations have been given
an orientation.
For points $x$ and $y$ on a one-dimensional leaf,
write $x = y$, $x < y$, or $x > y$
to denote their relative positions with respect to
this orientation.

\begin{prop} \label{prop:monotonic}
    The semiconjugacy is monotonic along center leaves.
    That is, the orientations may be chosen
    so that if $L \in \Fc$ and $h(L) \in \Ac$ is its image,
    then $x \le y$ implies $h(x) \le h(y)$
    for all points $x,y \in L.$
\end{prop}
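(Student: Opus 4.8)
The plan is to reduce the proposition to a connectedness statement about the point‑fibers of $h$ along a center leaf, to prove that statement by a length‑expansion argument, and then to fix the orientations.

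Set $C := \sup_{x \in \bbR^3} \|h(x) - x\| < \infty$. Fix a center leaf $L = \Fc(x_0)$, put $v = h(x_0)$, so that $h(L) = \Ac(v)$ and $h|_L : L \to \Ac(v)$ is a continuous surjection between two copies of $\bbR$. For $w \in \Ac(v)$ the identity $h\inv(\Ac(v)) = \Fc(x_0)$ forces $h\inv(w) \subset L$, and $h\inv(w)$ is compact, being closed and contained in the ball of radius $C$ about $w$. I would then use the elementary fact that a continuous surjection $g : \bbR \to \bbR$ all of whose fibers are connected is monotone: otherwise, for some $a < b < c$ the value $g(b)$ fails to lie between $g(a)$ and $g(c)$, and the intermediate value theorem on $[a,b]$ and on $[b,c]$ produces $\beta_1 < b < \beta_2$ with $g(\beta_1) = g(\beta_2) = \tau \neq g(b)$, contradicting connectedness of $g\inv(\tau)$. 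Applying this to $h|_L$, the proposition (up to the choice of orientations) follows once every fiber $h\inv(w)$ is shown to be connected.

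This connectedness is the heart of the argument and the step I expect to be the main obstacle. Suppose $h\inv(w) \subset L$ is disconnected; being compact it contains points $a < b$ with the open center‑arc $(a,b)_L$ disjoint from $h\inv(w)$. Then $h$ is non‑constant on the compact arc $[a,b]_L$ (otherwise that arc lies in $h\inv(w)$), and $h((a,b)_L)$ is connected and avoids $w$, so $h([a,b]_L)$ is a nondegenerate segment of the straight line $\Ac(v)$, say $[w, M]$ with $M \neq w$. Now push backward: $f\invn([a,b]_L)$ is the center‑arc of $\Fc(f\invn x_0)$ joining $f\invn a$ and $f\invn b$. Since $h(f\invn a) = A\invn w = h(f\invn b)$, both endpoints lie within $C$ of $A\invn w$, hence within $2C$ of each other, so by quasi‑isometry of center leaves this arc has length, and therefore $\bbR^3$‑diameter, at most $2QC + Q$, independently of $n$; as $h$ is within $C$ of the identity, the set $h(f\invn([a,b]_L))$ has diameter at most $2QC + Q + 2C$. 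But that set equals $A\invn(h([a,b]_L)) = [A\invn w, A\invn M]$, a segment of a center leaf of $A$, and since $M - w$ points along the center eigendirection of $A$ its length is $e^{-n\lam^c(A)}\|M - w\|$, which tends to infinity because $\lam^c(A) < 0$. This contradicts the bound for large $n$, so $h\inv(w)$ is connected.

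Finally I would fix the orientations: choose an orientation of the (trivial) line bundle $\Ec(A)$ and of $\Ec(f)$ (possible since $\bbR^3$ is simply connected). Each $h|_L$ is then a non‑constant monotone surjection, hence either non‑decreasing or non‑increasing, and a short continuity argument — the union of all center leaves of each type is open in $\bbR^3$, which is connected — shows the type is the same for every center leaf; reversing the orientation of $\Ec(A)$ if necessary makes every $h|_L$ non‑decreasing. The reduction and this orientation bookkeeping are routine; only the connectedness of the fibers requires real work.
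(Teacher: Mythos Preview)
Your proof is correct and is essentially the same as the paper's: both iterate backward, exploit that $A\inv$ expands the linear center direction, and combine the quasi-isometry of center leaves with $h$ being a bounded distance from the identity to reach a contradiction. The paper packages the argument only cosmetically differently---it picks $x<y<z$ with $h(x)=h(z)\neq h(y)$ and shows $d_c(f\invn x,f\invn z)\to\infty$, contradicting $h(x)=h(z)$---which is exactly the dual of your argument that the arc $f\invn[a,b]_L$ stays bounded while its $h$-image $A\invn[w,M]$ blows up.
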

This result is well known, but a proof does not appear
to be given anywhere in the prior literature.

\begin{proof}
    Suppose $h$ is not monotonic along a center leaf $L.$
    Then there are points $x < y < z$ along $L$
    such that $h(x) = h(z) \ne h(y).$
    As $A \inv$ expands the linear center direction,
    \begin{math}
        \| A \invn h(x) - A \invn h(y) \|
        =
        \| h f \invn(x) - h f \invn(y) \|
        \to \infty
    \end{math}
    as $n \to \infty.$
    As $h$ is a finite distance from the identity,
    it follows that
    \begin{math}
        d_c(f \invn(x), f \invn(y)) \to \infty. \end{math}
    The same analysis shows 
    \begin{math}
        d_c(f \invn(y), f \invn(z)) \to \infty \end{math}
    and since the points along the leaf
    have the ordering $f \invn(x) < f \invn(y) < f \invn(z),$
    it follows that
    \begin{math}
        d_c(f \invn(x), f \invn(z)) \to \infty \end{math}
    as well.
    Using that center leaves are quasi-isometrically embedded,
    one can show that
    \begin{math}
        \| A \invn h(x) - A \invn h(z) \|
        \to \infty
    \end{math}
    which contradicts the fact that $h(x) = h(z).$
\end{proof}
\begin{cor} \label{cor:interval}
    For each $v \in \bbR^3,$ the preimage $h \inv(v)$ consists
    either of a single point or a compact
    segment inside a center leaf.
\end{cor}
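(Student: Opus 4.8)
\section*{Proof proposal}

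The plan is to deduce \cref{cor:interval} directly from the leaf bijection recalled above together with the monotonicity of \cref{prop:monotonic}. Fix $v \in \bbR^3$. Since $h$ is surjective, choose $x$ with $h(x) = v$ and set $L := \Fc(x)$. Because $h\inv(\Ac(v)) = \Fc(x) = L$, the whole fiber $h\inv(v)$ is contained in the single center leaf $L$; this confinement to one leaf is the one point that genuinely uses the structure of the invariant foliations rather than a soft argument.

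Next I would parametrize $L$ by arc length as a curve $\gamma : \bbR \to \bbR^3$ compatible with the chosen orientation on $\Fc$, and parametrize $\Ac(v)$ linearly by a coordinate in which $v$ has value $s_0$. Under these identifications the restriction $h|_L$ becomes a continuous map $\phi : \bbR \to \bbR$ which is surjective, since $h(L) = \Ac(v)$, and nondecreasing, by \cref{prop:monotonic}. The fiber $h\inv(v)$ then corresponds to $\phi\inv(s_0)$, which is closed by continuity and an interval by monotonicity: if $\gamma(t_1), \gamma(t_2) \in h\inv(v)$ and $t_1 \le t \le t_2$, then $v = h(\gamma(t_1)) \le h(\gamma(t)) \le h(\gamma(t_2)) = v$ along $\Ac(v)$, forcing $h(\gamma(t)) = v$.

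It remains to show $\phi\inv(s_0)$ is bounded, and here the finite-distance property of $h$ enters. If $\phi\inv(s_0)$ contained a ray, say $[t_0, \infty)$, then $h$ would collapse the entire half-leaf $\gamma([t_0,\infty))$ to the point $v$, so $\|\gamma(t) - v\| = \|\gamma(t) - h(\gamma(t))\|$ would remain uniformly bounded in $t$. But center leaves are quasi-isometrically embedded, so $\|\gamma(t) - \gamma(t_0)\| \ge Q\inv |t - t_0| - 1 \to \infty$ as $t \to \infty$, a contradiction; the case of a ray $(-\infty, t_0]$ is identical. Hence $\phi\inv(s_0)$ is a compact interval, i.e.\ $h\inv(v)$ is either a single point or a compact segment inside the center leaf $L$, which is the assertion. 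The only real obstacle is this boundedness step; everything else is formal once the leaf bijection and \cref{prop:monotonic} are in hand.
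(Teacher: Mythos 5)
Your proof is correct and follows essentially the same route as the paper: localize the fiber to a single center leaf via the leaf bijection, use monotonicity of $h$ along center leaves for connectedness, and use the finite distance of $h$ from the identity together with the embedding property of center leaves for compactness. The only cosmetic difference is that you invoke the quasi-isometric embedding bound where the paper cites proper embedding of the leaf; both serve the same purpose.
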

\begin{proof}
    As $h$ is surjective,
    $h \inv(v)$ is non-empty.
    Let $L \in \Fc$ be such that $h(L)$ contains $v.$
    As $h$ is a bijection on the spaces of center leaves,
    it follows that $h \inv(v) \subof L.$
    As $L$ is properly embedded and $h$ is a finite distance
    from the identity,
    $h \inv(v)$ is a compact subset of $L.$
    As $h|_L$ is monotonic, $h \inv(v)$ is connected.
\end{proof}
We now show $h$ also defines a bijection between the spaces
of unstable leaves.

\begin{prop} \label{prop:utou}
    For an unstable leaf $L \in \Fu$ of $f,$
    the image $h(L)$ is an unstable leaf of $A$,
    and $h|_L$ is a homeomorphism.
\end{prop}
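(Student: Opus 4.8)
The plan is to first determine where $h$ sends the leaf $L$ — this is the crux — and then to upgrade injectivity of $h$ on $L$ into a homeomorphism by soft topology. The substantive point will be a single dynamical estimate combining the uniform contraction of $\Fu$ under $f\inv$, the hyperbolic spectrum of $A$, and the finite distance of $h$ from the identity; once that is in place, the remaining steps use only the global product structure and the quasi-isometry constant already recorded above.

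\emph{Step 1: $h(L)$ lies in a single unstable leaf of $A$.} Fix $x \in L$, put $v = h(x)$, and let $C$ bound the distance from $h$ to the identity. For $y \in L$, I would use that the unstable leaf is contracted by $f\inv$, so that $\|f\invn x - f\invn y\| \le d_u(f\invn x, f\invn y) \le d_u(x,y)$ for all $n \ge 0$. Since $h \circ f\inv = A\inv \circ h$ and $h$ is within $C$ of the identity,
\[
    \|A\invn(h(x) - h(y))\| = \|h(f\invn x) - h(f\invn y)\| \le d_u(x,y) + 2C
\]
for every $n \ge 0$. Decomposing $h(x)-h(y)$ in the eigenbasis of $A$ and using $\lam^s(A) < \lam^c(A) < 0 < \lam^u(A)$, the boundedness of the left-hand side forces the stable and center components to vanish. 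Hence $h(y)-h(x)$ lies in the $\lam^u$-eigenspace of $A$, i.e.\ $h(y) \in \Au(v)$; as $y \in L$ was arbitrary, $h(L) \subseteq \Au(v)$.

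\emph{Step 2: $h|_L$ is injective, and $L$ is a line.} If $h(x)=h(y)$ with $x,y \in L$, then by \cref{cor:interval} the fibre $h\inv(h(x))$ lies in a single center leaf, so $y \in \Fc(x)$; but also $y \in L = \Fu(x)$, and by the global product structure (part $(3)$, with $y=x$) the leaves $\Fc(x)$ and $\Fu(x)$ meet in the single point $x$, whence $y=x$. A leaf of the $1$-dimensional foliation $\Fu$ is homeomorphic to $\bbR$ or $\bbS^1$, and $\bbS^1$ admits no continuous injection into the line $\Au(v)$; so $L\cong\bbR$, and $h|_L\colon L\to\Au(v)$ is a continuous injection between lines, hence a homeomorphism onto an open subinterval.

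\emph{Step 3: $h|_L$ is onto $\Au(v)$.} Here I would use that $L$ is quasi-isometrically embedded together with the finite-distance bound on $h$: for $x,y\in L$,
\[
    \|h(x)-h(y)\| \ \ge\ \|x-y\| - 2C \ \ge\ \tfrac{1}{Q}\,d_u(x,y) - 1 - 2C,
\]
so $\|h(x)-h(y)\|\to\infty$ as $y$ tends to either end of $L$. Reading $h|_L$ as a strictly monotone continuous map $\bbR\to\bbR$, it therefore runs to $+\infty$ at one end of $L$ and to $-\infty$ at the other, hence is onto; the only delicate point is escape to infinity at \emph{both} ends, which follows from monotonicity. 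Therefore $h(L)=\Au(v)$ is an unstable leaf of $A$ and $h|_L$ is a homeomorphism. As noted above, Step 1 is where the real work lies; Steps 2 and 3 are essentially bookkeeping with structure already available.
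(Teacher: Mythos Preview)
Your argument is correct and follows essentially the same strategy as the paper: use backward iteration together with the semiconjugacy to force $h(L)$ into a single linear unstable leaf, then upgrade the resulting injective map of lines to a homeomorphism. The one noteworthy variation is your injectivity step: the paper argues directly from forward dynamics ($\|f^n(x)-f^n(y)\|\to\infty$ for $x\ne y$ on $L$, hence $h(x)\ne h(y)$), whereas you invoke \cref{cor:interval} to place a hypothetical fibre inside a center leaf and then use global product structure to intersect $\Fc(x)$ with $\Fu(x)$; both are short and either works.
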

\begin{proof}
    Suppose $x, y \in L.$
    Then
    \begin{math}
        \| f \invn(x) - f \invn(y) \| \to 0
    \end{math}
    as $n \to \infty.$
    As $h$ is uniformly continuous and a semiconjugacy,
    it follows that
    \begin{math}
        \| A \invn h(x) - A \invn h(y) \| \to 0
    \end{math}
    which is only possible if $h(x)$ and $h(y)$ are on the same
    linear unstable leaf.
    If $x \ne y,$ then
    \begin{math}
        \| f^n(x) - f^n(y) \| \to \infty
    \end{math}
    from which one can show that $h(x) \ne h(y).$
    As an injective proper map from one copy of $\bbR$ to another,
    $h|_L$ must be a homeomorphism.
\end{proof}
\begin{prop} \label{prop:stocs}
    If $L \in \Fs$ is a stable leaf of $f,$
    then $h(L)$ is a continuous curve embedded in a center stable leaf
    of $A$.
    In this linear center-stable leaf,
    each linear center leaf intersects $h(L)$ exactly once.
\end{prop}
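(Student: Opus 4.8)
The plan is to first show $h(L)$ lies inside a single linear center-stable leaf, and then to show that within that leaf $h(L)$ meets each linear center leaf exactly once. For the first part, I would use the contraction along stable leaves under forward iteration: if $x, y \in L$ then $d_s(f^n(x), f^n(y)) \to 0$, hence $\|f^n(x) - f^n(y)\| \to 0$, and since $h$ is uniformly continuous and a semiconjugacy, $\|A^n h(x) - A^n h(y)\| \to 0$. Because $A$ is hyperbolic with $\lam^s(A) < \lam^c(A) < 0 < \lam^u(A)$, the only way two points can be brought together under $A^n$ is for them to lie on the same linear center-stable leaf $\Acs$. Thus $h(L) \subof \Acs(h(x))$ for any fixed $x \in L$. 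Since $h|_L$ is continuous and $L \cong \bbR$, the image $h(L)$ is a continuous curve inside this center-stable leaf.

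For the second part, I would combine two facts. First, "each linear center leaf intersects $h(L)$ at most once": a linear center leaf $\Ac(v)$ inside $\Acs(h(x))$ is exactly the image $h(\Fc(x'))$ of a unique center leaf of $f$, since $h$ is a bijection on center-leaf spaces; so if $h(L)$ met $\Ac(v)$ twice, say at $h(p) = h(q)$ with $p \ne q$ on $L$, then $p, q$ would lie on a common center leaf $\Fc$ \emph{and} on the common stable leaf $L$, which is impossible by the global product structure (a stable leaf and a center leaf share at most one point, since $\Es \cap \Ec = 0$ and leaves are properly embedded in $\bbR^3$). Second, "each linear center leaf intersects $h(L)$ at least once": here I would use that $\Acs = \bigcup$ of its linear center leaves, that $h(L)$ is a connected curve, and that the projection of $\Acs$ along its center leaves onto a linear stable leaf $\As$ is a homeomorphism $\bbR \to \bbR$ — so it suffices to show this projection of $h(L)$ is all of $\As$. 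To see the projection is surjective, observe that the stable leaf $L$ of $f$ is quasi-isometrically embedded and unbounded in both directions, and $h$ is a finite distance from the identity, so $h(L)$ is unbounded in $\Acs$ in the stable direction; combined with connectedness and the "at most once" property, the projection image is an unbounded connected subset of $\As \cong \bbR$ invariant under the contraction toward the intersection point with any fixed center leaf — forcing it to be all of $\As$.

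I expect the main obstacle to be the surjectivity claim (each linear center leaf meets $h(L)$), precisely because the semiconjugacy $h$ need not be injective: the preimage $h\inv(v)$ can be a nondegenerate center segment (Corollary \ref{cor:interval}), so one cannot simply transport the structure of $L$ across $h$. The clean way around this is to argue entirely on the linear side using the projection $\pi : \Acs \to \As$ along center leaves together with the monotonicity and finite-distance properties, rather than trying to invert $h$. A secondary technical point is checking that $h(L)$ genuinely escapes to infinity in the stable direction inside $\Acs$ — this follows from quasi-isometry of stable leaves of $f$ and the bounded distance of $h$ from the identity, but it needs to be stated carefully since a priori $h(L)$ could in principle wander in the center direction as well; the "at most once" property rules this out once surjectivity of the projection is in hand, so the logical order is: lies in one $\Acs$ leaf, meets each center leaf at most once, projection image is unbounded, projection image is therefore all of $\As$, done.
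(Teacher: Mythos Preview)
Your argument for ``$h(L)$ lies in a single $\Acs$-leaf'' and for ``at most once'' is correct (modulo the typo ``$h(p)=h(q)$'', which should read ``$h(p)$ and $h(q)$''), but you are working much harder than necessary and the surjectivity step has a genuine gap.

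The paper's proof is two lines. Since $L$ is a stable leaf, it lies in a single center-stable leaf $\Fcs(x)$; global product structure (item~(4) in the list of properties: $\Fc(x')\cap\Fs(y)$ is a single point whenever $x'\in\Fcs(y)$) then says $L$ meets \emph{every} center subleaf of $\Fcs(x)$ exactly once. Because $h$ is a bijection on the spaces of center leaves and of center-stable leaves, this transfers directly: $h(L)\subset\Acs(h(x))$, and for each linear center leaf $\Ac(v)\subset\Acs(h(x))$ there is a unique $\Fc(y)\subset\Fcs(x)$ with $h(\Fc(y))=\Ac(v)$, so the unique point of $L\cap\Fc(y)$ maps into $\Ac(v)$. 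Both ``at least once'' and ``at most once'' fall out simultaneously, with no dynamics and no projection argument.

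Your route for surjectivity, by contrast, hinges on the claim that the projection of $h(L)$ to $\As$ is unbounded. Quasi-isometry of $L$ together with $h$ being a finite distance from the identity only tells you that $h(L)$ is unbounded in $\Acs$; it does \emph{not} by itself rule out $h(L)$ escaping to infinity purely in the center direction while the $\As$-projection stays bounded. Your remark about the image being ``invariant under the contraction toward the intersection point with any fixed center leaf'' is unclear---$h(L)$ is not invariant under $A$, and no other contraction has been introduced---so this does not close the gap. You correctly flag this as the main obstacle in your final paragraph, but the proposed logical order still contains ``projection image is unbounded'' as an unproved assertion. The clean fix is exactly the structural argument above: global product structure already hands you a bijection between points of $L$ and center subleaves of $\Fcs(x)$, and $h$ carries that bijection over to $\Acs(h(x))$.
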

\begin{proof}
    The stable leaf $L$ lies in a center-stable leaf of $f$
    and by global product structure
    $L$ intersects every center subleaf exactly once.
    As $h$ is a bijection on the spaces of center and center-stable leaves,
    the result follows.
\end{proof}
We now assume that $f$ is not accessible.
Then there is a non-empty lamination $\Gam \subof \bbR^3$
consisting of the non-open accessibility classes
\cite{RHRHU-accessibility}.
Call these the $us$-\emph{leaves} of $f.$
By global product structure,
each $us$-leaf is bifoliated
by stable and unstable leaves
and intersects every center leaf of $f$ exactly once.

Under this assumption, a stable
analogue of \cref{prop:utou} holds.

\begin{prop} \label{prop:stos}
    For a stable leaf $\Ls$ of $f,$
    the image $h(\Ls)$ is a (strong) stable leaf of $A$,
    and $h|_{\Ls}$ is a homeomorphism.
\end{prop}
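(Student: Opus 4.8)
The plan is to mimic the argument for \cref{prop:utou}, but now working in the $cs$-leaf that contains $\Ls$ and using the $us$-lamination $\Gam$ to control the extra center direction that \cref{prop:stocs} leaves ambiguous. First I would fix the $cs$-leaf $L^{cs} \in \Fcs$ containing $\Ls$ and its image $\Acs = h(L^{cs})$; by \cref{prop:stocs}, $h(\Ls)$ is a curve in $\Acs$ meeting each linear center leaf exactly once. To show this curve is an actual strong stable leaf of $A$, I would argue that for $x, y \in \Ls$ the distance $\|f^{-n}(x) - f^{-n}(y)\| \to 0$, hence by uniform continuity of the semiconjugacy $\|A^{-n}h(x) - A^{-n}h(y)\| \to 0$; since $A^{-n}$ expands the center direction and contracts only the strong stable direction, the only way this limit can be $0$ is if $h(x)$ and $h(y)$ lie on a common strong stable leaf of $A$. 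Because $h(\Ls)$ is connected and sits inside a single such strong stable leaf, and since every linear strong stable leaf meets each linear center leaf once, $h(\Ls)$ is exactly that strong stable leaf (surjectivity onto it following from the fact that $h$ is a bijection on center leaves and $\Ls$ meets every center leaf of $f$ once, using the $us$-lamination hypothesis).

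For injectivity of $h|_{\Ls}$, I would run the expansion in positive time: if $x \ne y$ on $\Ls$, then $d_s(f^n(x), f^n(y)) \to \infty$, and since stable leaves are quasi-isometrically embedded, also $\|f^n(x) - f^n(y)\| \to \infty$. The subtlety — and the step I expect to be the main obstacle — is concluding $h(x) \ne h(y)$ from this. In the unstable case (\cref{prop:utou}) this was easy because the fibers of $h$ are contained in center leaves, and a center leaf cannot contain two points of an unstable leaf at unbounded distance that map to the same point; here the analogous obstruction is that a center \emph{segment} (the fiber $h^{-1}(v)$ from \cref{cor:interval}) could a priori contain two distinct points of $\Ls$. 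But $\Ls$ lies in a $us$-leaf of $\Gam$, and a $us$-leaf meets each center leaf of $f$ exactly once; so $\Ls$ cannot contain two distinct points on the same center leaf, and in particular cannot contain two points in a single fiber $h^{-1}(v)$. Hence $h|_{\Ls}$ is injective.

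Finally, $h|_{\Ls}$ is a continuous injective map between two properly embedded copies of $\bbR$ (properness of the image following from $h$ being a finite distance from the identity and the target strong stable leaf being properly embedded), so it is a homeomorphism onto its image, which we have identified as a full strong stable leaf of $A$. The essential new ingredient compared to \cref{prop:utou} is precisely the use of the non-accessibility hypothesis through the lamination $\Gam$: without it, one only gets \cref{prop:stocs}, and the center direction genuinely obstructs both the identification of the image as a strong (rather than merely center-stable) object and the injectivity. I would take care to state clearly where each of the three structural facts — quasi-isometric leaves, $h$ finite distance from identity, and the one-point intersection of $us$-leaves with center leaves — enters.
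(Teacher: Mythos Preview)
Your argument has a genuine gap at the first and most important step. You want to conclude that $h(x)$ and $h(y)$ lie on the same \emph{strong} stable leaf of $A$ by a contraction argument as in \cref{prop:utou}, but the asymmetry between the stable and unstable cases is exactly the obstacle here. In the paper's normalization $\lam^s(A) < \lam^c(A) < 0 < \lam^u(A)$, so the linear center direction is \emph{contracting}. For $x,y \in \Ls$ one has $\|f^{n}(x)-f^{n}(y)\|\to 0$ (forward iterates contract a stable leaf; your $f^{-n}$ is reversed), hence $\|A^{n}h(x)-A^{n}h(y)\|\to 0$; but since $A^{n}$ contracts \emph{both} the strong stable and the center directions, this conclusion places $h(x),h(y)$ only in the same linear center-stable leaf, which is precisely \cref{prop:stocs} and no more. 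There is no choice of sign that rescues this: $A^{-n}$ expands both $\Es_A$ and $\Ec_A$, so it does not ``contract only the strong stable direction'' either. The asymmetry with \cref{prop:utou} is real---there $A^{-n}$ contracts only $\Eu_A$, which is what singles out the linear unstable leaf---and is the reason the paper needs a completely different mechanism. (Your expansion argument for injectivity has the same sign reversal: $d_s(f^{n}(x),f^{n}(y))\to 0$, not $\infty$. The injectivity itself is easy once the image is known to be a linear stable leaf, since fibers of $h$ are center segments and $\Ls$ meets each center leaf once.)

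The paper's proof does not try to read off the strong stable direction from the dynamics of a single pair of points. Instead it fixes one linear $cs$-plane $\Acs(0)$, looks at the family $S$ of curves $h(L)\cap\Acs(0)$ as $L$ ranges over all $us$-leaves in $\Gam$, and shows this family is invariant under a dense set of rigid translations of $\Acs(0)$ (coming from the $\bbZ^3$-action and \cref{prop:utou}) with no topological crossings (coming from \cref{prop:monotonic}). \Cref{prop:curves} then forces every curve in $S$ to be a straight line of a common slope, and $A$-invariance pins that slope to the strong stable direction. Only after this does the paper pass to an \emph{arbitrary} stable leaf $\Ls$ (not assumed to lie in $\Gam$, which your proposal tacitly assumes), using that $h(\Gam)=\bbR^3$ and monotonicity to trap $h(\Ls)$ inside a single linear $us$-leaf. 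So the missing idea is a rigidity argument for the whole family of images at once, not a pointwise dynamical argument; the latter cannot separate $\As_A$ from $\Ac_A$ here.
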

\begin{proof}
    This is an adaptation of the argument
    given in \cite[\S $6$]{HamUres}.
    The key idea is to intersect the $us$-leaves
    with one fixed center-stable leaf,
    apply $h$ to the resulting collection of stable leaves
    and show that the images 
    satisfy the hypotheses of \cref{prop:curves}.
    We now give the details.

    \smallskip{}

    Consider the linear center-stable leaf $\Acs(0)$
    passing through the origin in $\bbR^3.$
    The pre-image $h \inv(\Acs(0))$ is a center-stable leaf of $f.$
    For any $us$-leaf $L \in \Gam,$
    the intersection $L \cap h \inv(\Acs(0))$
    is a stable leaf and so its image $h(L) \cap \Acs(0)$
    satisfies the conclusions of \cref{prop:stocs}.
    Define a set of curves in $\Acs(0)$ by
    \[
        S \ = \ \big \{ \ h(L) \cap \Acs(0) \ : \ L \in \Gam \ \big \}.
    \]
    Since $h$ is monotonic along center leaves,
    the curves in $S$ do not topologically cross.
    For $z \in \bbZ^3,$ define a translation $\tau_z : \Acs(0) \to \Acs(0)$
    by setting $\tau_z(v)$ to the unique intersection of
    $\Au(v + z)$ with $\Acs(0).$
    If $\gam = h(L) \cap \Acs(0)$ is a curve in $S,$
    \cref{prop:utou} shows that
    $\tau_z(\gam) = h(L + z) \cap \Acs(0)$
    is also a curve in $S.$
    As the unstable foliation of a linear Anosov map on $\bbT^3$
    is minimal \cite{Franks1},
    the set of translations
    $\{ \tau_z : z \in \bbZ^3 \}$
    is dense in the set of all rigid translations of $\Acs(0).$

    The collection of curves $S$
    satisfies the hypotheses of \cref{prop:curves}
    where $\Acs(0)$ is identified with $\bbR^2$
    and the linear center foliation is identified with 
    vertical lines on $\bbR^2.$
    All curves in $S$ are thus linear.
    Since the collection $S$ is invariant under $A$,
    these curves must be aligned with the linear stable direction.

    \smallskip{}

    We have shown that for a $us$-leaf $L$ of $f,$
    the image $h(L)$ is a $us$-leaf for the linear map $A$.
    On $\bbT^3,$
    such linear $us$-leaves are dense in $\bbT^3$
    and so on the universal cover
    the image of the closed set $\Gam$ must be $h(\Gam) = \bbR^3.$
    Consider now any stable leaf $\Ls \in \Fs.$
    Since $\Ls$ does not cross through
    any $us$-leaf of $f,$
    the monotoni\-city of $h$ implies that the
    image $h(\Ls)$
    does not topologically cross any leaf of
    the linear $us$-foliation.
    Hence,
    $h(\Ls)$ must lie in a single linear $us$-leaf.
    It also lies in a single linear $cs$-leaf
    and so it is a linear stable leaf.
    That $h|_{\Ls}$ is a homeomorphism
    is proved similarly to \cref{prop:utou}.
\end{proof}
\bigskip{}

We now consider the set $Y \subof \bbR^3$
consisting of all points where $h$ is injective.
That is,
$y \in Y$ if and only if $h \inv (h(y)) = \{y\}.$

\begin{prop} \label{prop:Yus}
    The set $Y$ is a union of $us$-leaves.
\end{prop}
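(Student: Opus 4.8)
The plan is to establish two facts: (i) the complement $Y^c$ is saturated by both the stable and the unstable foliation of $f$, so that $Y$ is a union of accessibility classes; and (ii) no open accessibility class lies inside $Y$, so that the accessibility classes making up $Y$ are exactly the $us$-leaves of $\Gam$. Together these give that $Y$ is a union of $us$-leaves.

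For (i), I would start from a point $z\notin Y$. By \cref{cor:interval} the fiber $h\inv(h(z))$ is a non-degenerate compact center segment, so it contains a point $z'\ne z$ with $z'\in\Fc(z)$ and $h(z')=h(z)$. Fix any $w\in\Fs(z)$. The three points $w,z,z'$ all lie in the single center-stable leaf $\Fcs(z)$, so by item (4) of the global product structure the intersection $w':=\Fc(w)\cap\Fs(z')$ is a single point. Pushing down by $h$ --- using $h(\Fc(w))=\Ac(h(w))$, \cref{prop:stos} on the stable leaves, and $h(\Fcs(z))=\Acs(h(z))$ --- one sees that both $h(w)$ and $h(w')$ lie on $\Ac(h(w))\cap\As(h(z))$ inside the linear plane $\Acs(h(z))$; since the linear center and strong-stable directions are distinct, this intersection is a single point, so $h(w)=h(w')$. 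Because $z'\ne z$ the global product structure forces $\Fs(z)\ne\Fs(z')$ (else $z'\in\Fc(z)\cap\Fs(z)=\{z\}$), hence $w'\ne w$, so $w\notin Y$. Thus $\Fs(z)\subof Y^c$, and the symmetric argument with $\Fu$ (using item (3) of the global product structure and \cref{prop:utou}) gives $\Fu(z)\subof Y^c$. Hence $Y^c$, and so $Y$, is a union of accessibility classes.

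For (ii), suppose for contradiction that some open accessibility class $C$ satisfies $C\subof Y$, and fix $x\in C$ with $v:=h(x)$. Following any $us$-path out of $x$ and inducting on the number of legs --- \cref{prop:utou} on unstable legs, \cref{prop:stos} on stable legs, and the fact that a linear $us$-leaf of $A$ is invariant under linear stable and unstable holonomy --- one gets that $h$ sends the whole path into the single linear $us$-leaf $\Aus(v)$; hence $h(C)\subof\Aus(v)$. But $C$ is open in $\bbR^3$ and $h|_C$ is continuous and injective (since $C\subof Y$), so $h(C)$ is open in $\bbR^3$ by invariance of domain, whereas $\Aus(v)$ is a $2$-dimensional affine subspace containing no non-empty open subset of $\bbR^3$ --- a contradiction. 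Since every accessibility class in $Y$ is therefore non-open, it is a $us$-leaf, and $Y$ is their union.

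I expect the dimension count in (ii) to be the conceptual heart of the argument, and the only genuinely delicate bookkeeping to be the check in (i) that $w'\ne w$: one must track precisely which of the stable, center, and center-stable leaves coincide, using the uniqueness clauses of the global product structure, to be sure the two preimages of $h(w)$ are actually distinct. Everything else --- verifying that the various images of $h$ land on the asserted linear leaves --- is immediate from the leaf-bijection properties of $h$ recorded above.
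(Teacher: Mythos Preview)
Your proof is correct, and part (i) is essentially the paper's argument written out in full: both show $Y^c$ is $us$-saturated by sliding the non-trivial center fiber $h\inv(h(z))$ along stable and unstable holonomies and checking, via \cref{prop:utou} and \cref{prop:stos}, that the resulting center arc is still collapsed by $h$.

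The genuine difference is in part (ii). The paper dispenses with invariance of domain: once one knows $h(U)\subset\Aus(v)$ for an open accessibility class $U$, any non-degenerate center segment $J\subset U$ has $h(J)$ lying in both a linear center leaf and the linear $us$-leaf $\Aus(v)$, hence in a single point; so every point of $U$ already fails injectivity and $U\cap Y=\emptyset$. Your invariance-of-domain route is perfectly valid but brings in more topology than needed, whereas the paper's version keeps the obstruction completely elementary --- just transversality of $\Ac$ and $\Aus$ --- and exhibits an explicit witness to non-injectivity. On the other hand, your argument would survive unchanged in settings where one did not have such a clean transversal subfoliation to exploit.
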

\begin{proof}
    We first show that the complement of $Y$
    is $us$-saturated.
    If $x \in \bbR^3 \sans Y,$
    then $x$ lies on a compact interval
    $J = h \inv (h (x)).$
    Using stable and unstable holonomies,
    we can map $J$ to a compact interval
    on any other center leaf
    and propositions \ref{prop:utou} and \ref{prop:stos}
    show that this other interval
    is also mapped to a point by $h.$

    If $U \subof \bbR^3$ is an open accessibility class,
    then $h(U)$ is a single linear $us$-leaf.
    Any center segment contained in $U$ maps to a single point under
    $h$ and so $U$ and $Y$ are disjoint.
    This shows that $Y$ is a subset of $\Gam.$
\end{proof}
\begin{prop} \label{prop:countable}
    For any linear center leaf $\Ac(v),$
    the set $\Ac(v) \sans h(Y)$ is countable.
\end{prop}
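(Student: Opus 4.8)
The plan is to show that the ``bad'' set $\Ac(v) \sans h(Y)$ is exactly the set of points $v'$ whose fiber $h\inv(v')$ is a nondegenerate center segment, and then to argue that only countably many such nondegenerate fibers can sit over a single linear center leaf. First I would fix a linear center leaf $\Ac(v)$ and note, using \cref{cor:interval}, that a point $v' \in \Ac(v)$ fails to lie in $h(Y)$ precisely when every preimage of $v'$ is a point of $\bbR^3 \sans Y$; since the preimage $h\inv(v')$ is a single connected compact set (either a point or a nondegenerate center segment $J$), and by \cref{prop:Yus} the complement of $Y$ is a union of $us$-leaves while $J$ lies in a single center leaf meeting each $us$-leaf once, one checks that $v' \notin h(Y)$ is equivalent to $h\inv(v')$ being a nondegenerate segment. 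Thus $\Ac(v) \sans h(Y)$ is in bijection with the collection of nondegenerate fibers lying in the single center leaf $L := h\inv(\Ac(v)) \in \Fc$ (recall $h$ is a bijection on center leaves).

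Next I would use monotonicity of $h|_L$ (\cref{prop:monotonic}). Orient $L$ and $\Ac(v)$ compatibly, so $h|_L : L \to \Ac(v)$ is a continuous, surjective, monotone map between two copies of $\bbR$. The fibers of such a map are the ``plateaus'' of a monotone function, and these correspond to the points of $\Ac(v)$ where the inverse monotone correspondence has a jump. A standard fact about monotone real functions is that they have at most countably many intervals of constancy of positive length — equivalently, the inverse relation has at most countably many jump discontinuities — because to each nondegenerate plateau one can associate a distinct rational in the (disjoint) image-intervals, or dually because the corresponding jumps in the monotone inverse are disjoint and hence countable. Either phrasing shows the set of $v' \in \Ac(v)$ over which $h\inv(v')$ is nondegenerate is countable, which is exactly what we want.

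The one point requiring a little care — and the step I expect to be the mild obstacle — is the first one: verifying that $v' \notin h(Y)$ really is equivalent to $h\inv(v')$ being nondegenerate, rather than merely implied by it. The subtlety is that $v'$ could a priori have several preimages in distinct center leaves; but this cannot happen, since $h$ is a bijection on the space of center leaves (\S3 of \cite{ham-thesis}), so $h\inv(v')$ lies in the single center leaf $L = h\inv(\Ac(v))$, and by \cref{cor:interval} it is a single point or a single compact segment there. If it is a single point $y$, then $h\inv(h(y)) = \{y\}$ so $y \in Y$ and $v' \in h(Y)$; if it is a nondegenerate segment, none of its points lies in $Y$, so $v' \notin h(Y)$. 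This dichotomy closes the argument, and combined with the countability of plateaus of the monotone map $h|_L$ gives the proposition.
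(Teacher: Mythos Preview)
Your proposal is correct and follows essentially the same approach as the paper: identify $\Ac(v)\setminus h(Y)$ with the set of nondegenerate fibers of $h$ sitting inside the single center leaf $L=h^{-1}(\Ac(v))$, and then observe that these are pairwise disjoint positive-length subintervals of $L\cong\bbR$, hence countable. The paper's proof is a two-sentence sketch of exactly this; your version simply spells out the dichotomy (single point versus nondegenerate segment) and the monotone-plateau picture more carefully.
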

\begin{proof}
    Let $x$ be such that $h$ maps $\Fc(x)$ to $\Ac(v).$
    Then any point of $\Ac(v) \sans h(Y)$
    is the image of an interval of positive length in $\Fc(x)$
    and there can only be countably many
    disjoint intervals of this form.
\end{proof}
\begin{prop} \label{prop:closure}
    For a point $x \in \bbR^3,$ the following are equivalent:
    \begin{enumerate}
        \item $x$ is in the closure of $Y;$
        \item
        $h|_{\Fc(x)}$ is not locally constant at $x;$
        \item
        either $x$ lies in $Y$ or $x$ is an endpoint of the interval $h(h \inv(x)).$
    \end{enumerate} \end{prop}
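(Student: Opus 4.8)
The plan is to prove the three conditions equivalent by showing $(2) \Leftrightarrow (3)$ directly from \cref{cor:interval} and the definition of $Y$, and then $(1) \Leftrightarrow (3)$ using \cref{prop:countable}. I expect the implication $(3) \Rightarrow (1)$ — that an endpoint of a nondegenerate fiber interval must lie in the closure of $Y$ — to be the main obstacle, since it is the one place where we actually need to produce points of $Y$ nearby, and for this we will lean on \cref{prop:countable}.

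First I would dispense with $(2) \Leftrightarrow (3)$. By \cref{cor:interval}, the fiber $h\inv(h(x))$ is either the single point $\{x\}$ — in which case $x \in Y$ — or a compact segment $J = h\inv(h(x))$ inside the center leaf $\Fc(x)$. Since $h|_{\Fc(x)}$ is monotonic (\cref{prop:monotonic}) and surjective onto $\Ac(h(x))$, it is locally constant at $x$ precisely when $x$ lies in the interior of $J$; equivalently, $h|_{\Fc(x)}$ fails to be locally constant at $x$ exactly when either $J$ is degenerate (so $x \in Y$) or $x$ is one of the two endpoints of $J$. This is exactly the content of $(3)$, so $(2) \Leftrightarrow (3)$ is immediate.

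Next, $(3) \Rightarrow (1)$. Suppose $x \in Y$; then trivially $x$ is in the closure of $Y$. So suppose instead $x$ is an endpoint of $J = h(h\inv(x))$ — here I read the statement's notation $h(h\inv(x))$ as the image interval in $\Ac(h(x))$ whose preimage is the fiber; write $v = h(x)$ and let $[a,b] = h\inv(v) \subof \Fc(x)$ be the fiber segment with, say, $x = b$ its upper endpoint. Identify $\Fc(x)$ with $\bbR$ via its orientation. For points $w$ just above $v$ along $\Ac(v)$, the fiber $h\inv(w)$ sits just above $b$ in $\Fc(x)$, and by \cref{prop:countable} all but countably many such $w$ lie in $h(Y)$, i.e. have singleton fibers. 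Choosing a sequence $w_n \downarrow v$ with each $w_n \in h(Y)$, the unique preimages $y_n \in Y$ satisfy $y_n \downarrow b = x$ by monotonicity of $h|_{\Fc(x)}$ and properness of the leaf (using that $h$ is a finite distance from the identity, so the $y_n$ cannot escape to infinity). Hence $x = \lim y_n$ lies in the closure of $Y$.

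Finally, $(1) \Rightarrow (2)$, equivalently $\neg(2) \Rightarrow \neg(1)$. If $h|_{\Fc(x)}$ is locally constant at $x$, then $x$ lies in the interior of a nondegenerate center segment $I \subof \Fc(x)$ on which $h$ is constant; in particular $I \subof h\inv(h(x))$, so no point of $I$ lies in $Y$, and $I$ is an open neighborhood of $x$ within the leaf. To conclude $x \notin \bar Y$ we need an open neighborhood in all of $\bbR^3$: take the $us$-saturation of $\ior I$ obtained by flowing along stable and unstable holonomies. By \cref{prop:Yus} (more precisely by its proof, where stable and unstable holonomies carry the constant-fiber interval $I$ to constant-fiber intervals on nearby center leaves, using \cref{prop:utou} and \cref{prop:stos}), this saturation is an open set — it contains an open accessibility class through $x$, or at least contains the image of $\ior I$ under a full set of $su$-holonomies, which is open by the absolute continuity / openness of the holonomy maps — and it is disjoint from $Y$. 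Therefore $x$ is not in the closure of $Y$, completing the cycle $(1) \Rightarrow (2) \Rightarrow (3) \Rightarrow (1)$.
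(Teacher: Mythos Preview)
Your argument is correct and largely parallels the paper's: both dispatch $(2)\Leftrightarrow(3)$ from monotonicity and surjectivity of $h|_{\Fc(x)}$, and both use \cref{prop:countable} to produce points of $Y$ on $\Fc(x)$ accumulating on $x$ (your $(3)\Rightarrow(1)$ is exactly the paper's $(2)\Rightarrow(1)$). The one divergence is in $(1)\Rightarrow(2)$. You argue the contrapositive by $su$-saturating $\ior I$ to obtain an open neighborhood of $x$ disjoint from $Y$; this works, but your justification of openness is loose --- ``absolute continuity'' is not the relevant notion (you just need the local product structure of $\Fc,\Fs,\Fu$, so that in a foliation chart the local $su$-saturation of $\ior I$ is a product box), and the appeal to ``an open accessibility class through $x$'' is unwarranted since $x$ may well lie in $\Gamma$. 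The paper instead proves $(1)\Rightarrow(2)$ directly and more cleanly: given $x\in\bar Y\setminus Y$, take any sequence $z_n\in Y$ with $z_n\to x$; since $Y$ is $us$-saturated (\cref{prop:Yus}) and each $us$-leaf meets $\Fc(x)$ exactly once by global product structure, projecting each $z_n$ along its $us$-leaf onto $\Fc(x)$ yields $y_n\in Y\cap\Fc(x)$ with $y_n\to x$, and the distinct values $h(y_n)$ show $h|_{\Fc(x)}$ is not locally constant at $x$. This sidesteps any discussion of openness of a saturated set.
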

\begin{proof}
    Since $h|_{\Fc(x)} : \Fc(x) \to \Ac(h(x))$
    is continuous, surjective, and monotonic,
    it is straightforward to show $(2)$ $\Leftrightarrow$ $(3).$
    We show $(1)$ $\Leftrightarrow$ $(2).$
    Suppose $x \in \bar Y \sans Y.$
    As $Y$ is $us$-saturated,
    there are points $y_n \in \Fc(x) \cap Y$
    converging to $x.$
    The images $h(y_n)$ are distinct from each other
    and so $h|_{\Fc(x)}$ is not locally constant at $x.$
    Conversely, if $h|_\Fc(x)$ is not locally constant at $x,$
    then for any neighbourhood $x \in J \subof \Fc(x),$
    the image $h(J)$ has positive length
    and by \cref{prop:countable}
    there is $v \in h(J) \cap h(Y)$
    and so $h \inv(v) \in J \cap Y.$
\end{proof}
\begin{prop} \label{prop:minimalcover}
    For any $us$-leaf $L \in \Gam,$
    the closure of \
    \begin{math}
        \bigcup_{z \in \bbZ^3} (L + z)
    \end{math}
    contains $\bar Y.$
\end{prop}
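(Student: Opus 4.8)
The plan is to reduce the statement to points of $Y$ and then push everything through the semiconjugacy $h$. Since the closure of $\bigcup_{z\in\bbZ^3}(L+z)$ is a closed set, it suffices to prove $Y\subof\overline{\bigcup_{z\in\bbZ^3}(L+z)}$; taking closures then gives $\bar Y\subof\overline{\bigcup_{z\in\bbZ^3}(L+z)}$ (the claim being vacuous if $Y=\emptyset$). The engine is the fact, already obtained inside the proof of \cref{prop:stos}, that for a $us$-leaf $L\in\Gam$ the image $h(L)$ is a full (two-dimensional) linear $us$-leaf $\Aus(v)$ of $A$. Combined with $h(x+z)=h(x)+z$ for $z\in\bbZ^3$, this yields
\[
    h\Big(\bigcup_{z\in\bbZ^3}(L+z)\Big)=\bigcup_{z\in\bbZ^3}\Aus(v+z),
\]
which is the full preimage in $\bbR^3$ of a single linear $us$-leaf on $\bbT^3$; since the $us$-foliation of the linear Anosov map $A$ is minimal \cite{Franks1}, that leaf is dense, and hence $\bigcup_{z}\Aus(v+z)$ is dense in $\bbR^3$.

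Next I would fix $y\in Y$ and set $v'=h(y)$. Using the density just established, choose a sequence $x_n\in\bigcup_{z}(L+z)$ with $h(x_n)\to v'$. Because $h$ lies a finite distance from the identity, the sequence $\{x_n\}$ is bounded, so after passing to a subsequence it converges to some $x_\infty\in\bbR^3$, and continuity of $h$ gives $h(x_\infty)=v'=h(y)$. Since $y\in Y$ means $h\inv(v')=\{y\}$, we conclude $x_\infty=y$, so $y\in\overline{\bigcup_z(L+z)}$. This establishes $Y\subof\overline{\bigcup_z(L+z)}$ and, after taking closures, completes the proof.

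The only step warranting care is the input from \cref{prop:stos}: the argument relies on $h$ mapping each $us$-leaf of $f$ onto a \emph{genuine} linear $us$-leaf of $A$ (so that its $\bbZ^3$-translates are dense in $\bbR^3$), rather than onto some lower-dimensional subset, together with minimality of the correct linear foliation. Both facts are exactly what the proof of \cref{prop:stos} records, so once they are invoked I do not expect any real obstacle; the remainder is just the short compactness/properness argument above.
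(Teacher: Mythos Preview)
Your proof is correct and uses the same essential inputs as the paper's argument: $h(L)$ is a full linear $us$-leaf (established inside the proof of \cref{prop:stos}) and the $\bbZ^3$-translates of a linear $us$-leaf are dense in $\bbR^3$. The only minor difference is in the concluding step---the paper works directly with an arbitrary $x\in\bar Y$, chooses a short center segment $J\ni x$ with $h(J)$ of positive length (via \cref{prop:closure}), and shows some $L+z$ must meet $J$; you instead restrict to $y\in Y$, use a compactness argument together with the injectivity of $h$ at $y$, and then pass to closures---but both routes are equally valid.
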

\begin{remark}
    This shows that $\bar Y$ when projected
    down to a subset of $\bbT^3$ yields a minimal $us$-lamination.
    Moreover,
    this is the unique minimal $us$-lamination.
\end{remark}
\begin{proof}
    For a point $x \in \bar Y,$
    consider a short center segment $J \subof \Fc(x)$
    containing $x$ and
    such that $h(J)$ has positive length.
    As
    \begin{math}
        \bigcup_{z \in \bbZ^3} h(L + z)
    \end{math}    
    is a dense union of linear $us$-leaves,
    there is $z \in \bbZ^3$ such that $h(L + z)$
    intersects the interior of $h(J).$
    Let $v$ be the point of intersection.
    Then $h \inv(v)$ is contained in $J$
    and so $L + z$ intersects $J.$
\end{proof}
\begin{prop} \label{prop:percover}
    For any open set $U$ which intersects $\bar Y,$
    there is $x \in \bar Y \cap U, k \ge 1,$ and $z \in \bbZ^3$
    such that $f^k(x) = x + z.$
    That is, $x$ projects down to a periodic point on $\bbT^3.$
\end{prop}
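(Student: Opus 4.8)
The plan is to transport periodic orbits of the linear map $A$ to periodic orbits of $f$ lying inside $\bar Y$ via the semiconjugacy, exploiting the fact that the fiber of $h$ through a point of $Y$ is a single point in order to control where the transported orbits land. First I would record two observations. (i) $h(\bar Y) = \bbR^3$: since $h$ is a finite distance from the identity it is proper, hence maps closed sets to closed sets, and by \cref{prop:countable} the set $h(Y)$ meets every linear center leaf in a co-countable (hence dense) subset, so $h(\bar Y)\supof\overline{h(Y)} = \bbR^3$. (ii) Every rational point $v\in\bbR^3$ is periodic for $A$, i.e.\ $A^kv = v+z$ for some $k\ge 1$ and $z\in\bbZ^3$ (the $A$-orbit of a point of $\tfrac1q\bbZ^3$ stays in the finite set $\tfrac1q\bbZ^3/\bbZ^3$ on $\bbT^3$), and such $v$ are dense in $\bbR^3$.

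Next I would prove the transport step: \emph{if $v\in\bbR^3$ satisfies $A^kv = v+z$ with $z\in\bbZ^3$, then $\bar Y\cap h^{-1}(v)$ contains a periodic point of $f$.} By \cref{cor:interval} the fiber $I = h^{-1}(v)$ is either a single point or a compact segment inside a center leaf, and from $h\circ f = A\circ h$ together with $h(w+z) = h(w)+z$ for $z\in\bbZ^3$ one gets $f^k(I) = I + z$, so the map $g(x) = f^k(x) - z$ restricts to a homeomorphism of $I$ onto itself. If $I$ is a single point, that point is periodic and lies in $\bar Y$ by observation (i). If $I$ is a nondegenerate interval, then $g$ fixes both endpoints of $I$ — and if $g$ reverses orientation, replace it by $g^2$, which again has the form $x\mapsto f^{2k}(x) - z'$ with $z'\in\bbZ^3$ — while the endpoints of $I$ belong to $\bar Y$ by \cref{prop:closure}. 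Either way one obtains $x\in\bar Y\cap h^{-1}(v)$ with $f^{k'}(x) = x + z'$ for some $k'\ge 1$ and $z'\in\bbZ^3$.

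For the localization step I would use that $Y$ is dense in $\bar Y$ to choose $x_0\in U\cap Y$, and set $v_0 = h(x_0)$, so $h^{-1}(v_0) = \{x_0\}$. By observation (ii), pick periodic points $v_j\to v_0$ of $A$, and by the transport step pick periodic points $x_j\in\bar Y\cap h^{-1}(v_j)$ of $f$. Then $x_j\to x_0$: the points $x_j$ are bounded, since $\|x_j - h(x_j)\|$ is uniformly bounded while the $h(x_j) = v_j$ converge; any subsequential limit $x_*$ satisfies $h(x_*) = \lim_j v_j = v_0$, forcing $x_* = x_0$. Hence $x_j\in U$ for all large $j$, and such an $x_j$ is the required periodic point of $f$ in $\bar Y\cap U$.

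I expect the main obstacle to be the localization step, and in particular the realization that one must start at a point $x_0$ \emph{in $Y$} rather than at an arbitrary point of $\bar Y$: because the semiconjugacy is only a bounded — not a small — distance from the identity, transporting a nearby linear periodic orbit gives a priori only an $f$-periodic point within some fixed bounded distance of $x_0$, and it is precisely the singleton-fiber property at points of $Y$, combined with the upper semicontinuity of $v\mapsto h^{-1}(v)$ coming from properness, that upgrades "bounded distance" to "arbitrarily close." Minor points to handle carefully are the orientation argument for extracting a periodic endpoint when a fiber is a nondegenerate interval, and the identity $h(\bar Y) = \bbR^3$, which is what guarantees that singleton fibers actually meet $\bar Y$.
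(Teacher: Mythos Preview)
Your proof is correct, and the overall strategy---transport $A$-periodic points to $f$-periodic points in $\bar Y$ via the fiber $h^{-1}(v)$, using that endpoints of a nontrivial fiber lie in $\bar Y$---is the same as the paper's. The difference is in the localization step. The paper argues, via \cref{prop:closure}, that $h(U)$ has nonempty interior, picks a periodic $v$ in that interior, and asserts $h^{-1}(v)\subset U$; the last assertion implicitly relies on the product structure of $h$ near a point of $\bar Y$ (homeomorphism along $\Fs$ and $\Fu$ by \cref{prop:utou} and \cref{prop:stos}, monotone and non-constant along $\Fc$), so that a small foliation box in $U$ maps onto a set containing an open box whose $h$-preimage stays in $U$. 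Your limiting argument---choose $x_0\in Y\cap U$, take periodic $v_j\to h(x_0)$, and use properness of $h$ together with the singleton-fiber property at $x_0$ to force $x_j\to x_0$---bypasses that product-structure verification entirely, at the cost of a short compactness argument. One small redundancy: your observation (i) is true but unnecessary, since a singleton fiber $h^{-1}(v)=\{x\}$ already gives $x\in Y\subset\bar Y$ by the definition of $Y$.
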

\begin{proof}
    Using \cref{prop:closure}, one can show that
    $h(U)$ has non-empty interior.
    This interior contains a point $v$ which
    projects down a periodic point for
    the linear Anosov diffeomorphism on $\bbT^3;$
    that is, there are $k \ge 1$ and $z \in \bbZ^3$
    such that $A^k(v) = v + z.$
    By the leaf conjugacy,
    \begin{math}
        f^k( h \inv(v) ) + z = h \inv (v)
    \end{math}
    and as $v$ is in the interior of $h(U),$
    it follows that $h \inv(v)$ is contained in $U.$
    If $h \inv (v)$ is a singleton set, it is the desired point $x.$
    Otherwise, we may take either endpoint of the interval $h \inv(v)$
    to be $x.$
\end{proof}

\section{Minimal lamination and semiconjugacy on $\bbT^3$}\label{sec:lami}

We have now finished working on the universal cover.
All of the results from now until the end of the paper
will be for the original partially hyperbolic diffeomorphism
$f : \bbT^3 \to \bbT^3$.
In proving \cref{thm:main} we may freely lift $f$ to a finite cover
and replace it by an iterate;
therefore, we assume the invariant
bundles $E^c$, $E^u$, and $E^s$ are oriented and that
$f$ preserves these orientations.
The diffeomorphism is homotopic to a linear Anosov diffeomorphism
$A : \bbT^3 \to \bbT^3$
and the logarithms of the eigenvalues of $A$ satisfy
\[
    \lam^s(A) < \lam^c(A) < 0 < \lam^u(A).
\]
As before, we assume that $f$ is non-accessible.
Then there is a lamination $\Gam \subof \bbT^3$
consisting of the non-open accessibility classes of $f$.
By the work of Potrie,
the lamination $\Gam$ contains a unique minimal sublamination \cite{pot2014few}. 
This can also be seen directly from \cref{prop:minimalcover} above.
Let
$\Fu$, $\Fs$, $\Fcs$, $\Fcu$, and $\Fc$
denote the invariant foliations of $f$
considered as foliations defined on $\bbT^3$,
and similarly let
$\Au$, $\As$, $\Acs$, $\Acu$, $\Ac$, and $\Aus$
denote the invariant linear foliations of the toral automorphism $A$.
We fix an orientation of $\Fc$ and of $\Ac$.
Then each center leaf $\Fc(q)$ splits into two half-leaves
$$
\Fc(q)\setminus\{q\}=
\Fc_+(q)\cup\Fc_-(q),
$$
where $+$ and $-$ are determined by the orientation of $\Fc$ in $\bbT^3$.
For every $y\in\Fc_+(x)$, we let $[x,y]^c$ and $(x,y)^c$ denote the closed and open segments contained in $\Fc(x)$ with endpoints $x$ and $y$ respectively.

Let $h : \bbT^3 \to \bbT^3$ be the Franks semiconjugacy,
now considered as a continuous surjective map on the 3-torus.
That is, $h$ is isotopic to the identity and
$h(f(x)) = A(h(x))$
for all $x \in \bbT^3$.
\Cref{cor:interval} implies an analogous result for the semiconjugacy on the
3-torus: for each $v \in \bbT^3$,
the preimage $h \inv(v)$ consists
either of a single point or a compact
segment inside a center leaf.

If $x \in \bbT^3$ is such that $h\inv(h(x))=\{x\}$,
we define $x_+ = x_- = x$.
If instead $h\inv(h(x))$ is a positive length interval,
we define $x_+$ and $x_-$ to be the two endpoints of $h\inv(h(x))$ with corresponding orientation. 
That is, $x_+\in\Fc_+(x_-)$ and $x_-\in\Fc_-(x_+)$.
Define
\[ \Lam=\bigcup_{x \in \bbT^3}\{x_+,x_-\}. \]

%
\begin{prop}\label{prop:su-lamination}
	The set $\Lam$ and semiconjugacy $h$ satisfy the following properties:
	\begin{enumerate}
		\item The set $\Lam$ is a $us$-saturated minimal set, i.e. if $x\in\Lam$, then
		$$
		\Fus(x)\subset\Lam \qquad {\it and} \qquad
		\overline{\Fus(x)}=\Lam.
		$$ 
		
		\item The periodic points of $f|_\Lam$ are dense in $\Lam$.
		\item For every $x\in\Lam$, if $x=x_+$, then $y=y_+$ for every $y\in\Fus(x)\subset\Lam$; if $x=x_-$, then $y=y_-$ for every $y\in\Fus(x)\subset\Lam$.
		
		\item For $\sigma=u,s,us$, the semiconjugacy $h|_{\Lam}$ maps a leaf of the $\sigma$-foliation of $f$ to a leaf of the $\sigma$-foliation of $A$: 
		$$
		h(\Fsig(x))=\Asig(h(x)), \qquad\forall x\in\Lam.
		$$ 
		Moreover, $h:\Fsig(x)\rightarrow\Asig(h(x))$ is a homeomorphism.
	\end{enumerate}
\end{prop}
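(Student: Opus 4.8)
The plan is to deduce every part by lifting to the universal cover $\bbR^3$ and invoking the results of \cref{sec:univ}. Lift $f$, $h$, the invariant foliations, and the set $\Lam$ to $\bbR^3$. Since $Y$ is $\bbZ^3$-invariant so is $\bar Y$, and \cref{prop:closure} identifies $\bar Y$ with the set of points that either lie in $Y$ or are an endpoint of an interval $h(h\inv(\cdot))$; hence the lift of $\Lam$ is exactly $\bar Y$, and in particular $\Lam$ is closed in $\bbT^3$. Moreover $f$ commutes with $h$ up to $A$ and carries the center interval $h\inv(h(x))$ onto $h\inv(h(f(x)))$, so, as $f$ preserves the orientation of $E^c$, it sends $x_+$ and $x_-$ to the corresponding endpoints of $h\inv(h(f(x)))$; thus $f(\Lam)=\Lam$, all the statements are meaningful, and it remains to verify the four properties.

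For (1), \cref{prop:Yus} gives that $Y$ is $us$-saturated and contained in $\Gam$; as leaves of the lamination $\Gam$ vary continuously, $\bar Y$ is $us$-saturated too, so $\Fus(x)\subset\Lam$ for $x\in\Lam$. For minimality, apply \cref{prop:minimalcover} to $L=\Fus(\tilde x)$ for a lift $\tilde x$ of $x$: the closure of $\bigcup_{z\in\bbZ^3}(L+z)$ contains $\bar Y$, and projecting to $\bbT^3$ gives $\overline{\Fus(x)}\supset\Lam$, the reverse inclusion being automatic since $\Lam$ is closed and $us$-saturated. Property (2) is \cref{prop:percover} projected down: any open subset of $\bbT^3$ meeting $\Lam$ lifts to an open set meeting $\bar Y$, which contains a point projecting to a periodic point of $f$ lying in $\Lam$.

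For (4), the cases $\sigma=u$ and $\sigma=s$ are \cref{prop:utou} and \cref{prop:stos}, which give that $h$ maps the relevant leaf of $f$ homeomorphically onto the corresponding linear leaf (for every such leaf on $\bbR^3$, hence on $\bbT^3$). For $\sigma=us$ the proof of \cref{prop:stos} already shows $h$ sends each $us$-leaf in $\Gam$ into a linear $us$-leaf of $A$; surjectivity onto all of $\Aus(h(x))$ follows by writing any $v\in\Aus(h(x))$ as a point of $\Au(v_1)$ with $v_1=\Au(v)\cap\Acs(h(x))\in\As(h(x))$, noting $v_1=h(p)$ for some $p\in\Fs(x)\subset\Fus(x)$ by \cref{prop:stos} and then $v\in\Au(h(p))=h(\Fu(p))\subset h(\Fus(x))$ by \cref{prop:utou}; injectivity holds because two points of a $us$-leaf with equal $h$-image would lie in a common interval $h\inv(h(\cdot))$, which \cref{cor:interval} confines to a single center leaf, whereas a $us$-leaf meets each center leaf once. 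A continuous proper bijection between the leaves is then a homeomorphism.

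Property (3) is the crux, and the step I expect to be hardest. Fix $x\in\Lam$ and $y\in\Fu(x)$, lifted to $\bbR^3$. Inside the common center-unstable leaf $\Fcu(x)=\Fcu(y)$, let $\pi:\Fc(x)\to\Fc(y)$ be the unstable holonomy; by global product structure $\pi$ is a homeomorphism, and one sees from the product coordinates on $\Fcu(x)$ that $\pi$ preserves the orientation of center leaves. Using \cref{prop:utou} and the fact that $h$ is a bijection on the spaces of center and center-unstable leaves, one verifies the equivariance $h\circ\pi=\pi_A\circ h$, where $\pi_A$ is the unstable holonomy of $A$ between $\Ac(h(x))$ and $\Ac(h(y))$. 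By monotonicity of $h$ along center leaves (\cref{prop:monotonic}), the condition ``$x=x_+$'' is equivalent to ``$h$ is non-constant on $\Fc_+(x)$ arbitrarily close to $x$''; transporting this across the orientation-preserving homeomorphisms $\pi$ and $\pi_A$ via $h\circ\pi=\pi_A\circ h$ gives ``$h$ is non-constant on $\Fc_+(y)$ near $y$'', i.e. $y=y_+$ (and symmetrically with $-$). The case $y\in\Fs(x)$ is identical, using \cref{prop:stos} and stable holonomy inside the center-stable leaf, and a general $y\in\Fus(x)$ follows by concatenating finitely many stable and unstable legs; projecting back to $\bbT^3$ finishes the proof. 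The delicate points---making the equivariance $h\circ\pi=\pi_A\circ h$ precise and checking that the stable and unstable holonomies genuinely preserve the chosen orientation of the center foliation---are exactly what guarantee that the ``endpoint type'' of a point of $\Lam$ ($x=x_+$, or $x=x_-$, or $x$ in $Y$) is a locally constant, hence constant, invariant along each $us$-leaf.
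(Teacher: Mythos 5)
Your proof is correct and takes essentially the same route as the paper: lift to the universal cover, identify the lift of $\Lambda$ with $\bar Y$ via \cref{prop:closure}, and derive items (1)--(4) from \cref{prop:closure}, \cref{prop:minimalcover}, \cref{prop:percover}, \cref{prop:Yus}, \cref{prop:utou}, and \cref{prop:stos}. You have filled in details the paper leaves implicit---in particular the orientation-preservation of the stable and unstable holonomies needed for item (3), and the global-product-structure argument showing surjectivity and injectivity of $h$ on $us$-leaves for item (4)---but the underlying decomposition and key lemmas are the same.
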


\begin{proof}
    These are all consequences of the results in \cref{sec:univ}.
    Item (1) follows from propositions \ref{prop:closure} and
    \ref{prop:minimalcover}.
    Item (2) follows from \cref{prop:percover}.
    Item (3) follows from \cref{prop:Yus}.
    Item (4) follows from propositions \ref{prop:utou} and \ref{prop:stos}
    and global product structure.
\end{proof}

\begin{prop} \label{prop:su-Holder}
	There exist constants $C_1>1$ and $0<\alpha<1$, such that for every $\sigma=u,s,us$, the homeomorphism $h|_{\Fsig(x)}:\Fsig(x)\rightarrow\Asig(h(x))$ is bi-H\"older continuous, i.e. for every $x_1,x_2\in \Fsig(x)$, we have
	$$
	d_{\Fsig}(x_1,x_2)\leq C_1\cdot d_{\Asig}(h(x_1),h(x_2))^{\alpha}, \quad
	d_{\Asig}(h(x_1),h(x_2)) \leq C_1\cdot d_{\Fsig}(x_1,x_2)^{\alpha}.
	$$	
\end{prop}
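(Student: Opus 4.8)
The plan is to establish the Hölder bounds leaf by leaf using the contraction/expansion rates of $f$ and $A$ along the relevant directions, together with the fact that $h$ intertwines the two dynamics and stays a bounded distance from the identity. Let me focus on the unstable case $\sigma = u$; the stable case $\sigma = s$ is symmetric (replacing $f$ by $f^{-1}$), and the $us$-case follows by composing a stable and an unstable holonomy inside a $us$-leaf.

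First I would fix uniform constants. Since $f$ is $C^{1+\alpha}$ and partially hyperbolic, there are constants $0 < \mu_1 < \mu_2$ with $\mu_1 > 1$ (after passing to an iterate, which is harmless) such that for $x_1, x_2$ on a common unstable leaf of $f$ and small enough distance,
\[
    \mu_1 \, d_{\Fu}(x_1,x_2) \le d_{\Fu}(f(x_1),f(x_2)) \le \mu_2 \, d_{\Fu}(x_1,x_2),
\]
and similarly $A$ expands its strong unstable direction exactly by the eigenvalue $\Lam^u = e^{\lam^u(A)} > 1$. The key mechanism: given $x_1, x_2 \in \Fu(x)$ with $h(x_i) = v_i \in \Au(v)$, apply $f^{-n}$ to bring $x_1, x_2$ to unit-scale distance, say $d_{\Fu}(f^{-n}(x_1), f^{-n}(x_2)) \asymp 1$; this forces $n \asymp \log d_{\Fu}(x_1,x_2) / \log \mu_i$. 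Because $h$ is a bounded distance $R$ from the identity and $h \circ f^{-n} = A^{-n} \circ h$, the points $A^{-n}(v_1), A^{-n}(v_2)$ lie within bounded distance of the unit-scale points $f^{-n}(x_1), f^{-n}(x_2)$, hence $d_{\Au}(A^{-n}(v_1), A^{-n}(v_2)) \le$ some constant $C_0$ (using quasi-isometry of unstable leaves to convert ambient distance to leaf distance). Applying $A^n$ gives $d_{\Au}(v_1,v_2) \le C_0 (\Lam^u)^n$, and substituting $n \lesssim \log d_{\Fu}(x_1,x_2)/\log\mu_1$ yields $d_{\Au}(v_1,v_2) \le C_1 \, d_{\Fu}(x_1,x_2)^{\alpha}$ with $\alpha = \log \Lam^u / \log \mu_2$ (one must be careful to pick the exponent from the side of the inequality that makes the estimate go the right way). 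The reverse inequality $d_{\Fu}(x_1,x_2) \le C_1 \, d_{\Au}(v_1,v_2)^{\alpha}$ is obtained the same way with the roles interchanged: bring the linear points to unit scale under $A^{-n}$, which costs $n \asymp \log d_{\Au}(v_1,v_2)/\log\Lam^u$, transfer to $f^{-n}(x_1), f^{-n}(x_2)$ which are then within bounded leaf-distance, and apply $f^n$ using the upper rate $\mu_2$. This produces a (possibly different) Hölder exponent; taking the minimum of all exponents obtained and enlarging $C_1$ gives the single pair of constants claimed.

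For the $us$ case, recall from \cref{prop:su-lamination}(4) that inside $\Lam$ the map $h$ takes $\Fus(x)$ homeomorphically to $\Aus(h(x))$ and respects the bifoliation by stable and unstable leaves; moreover by item (1) of that proposition everything takes place inside $\Lam$, which is the relevant domain of the statement. Given two points $x_1, x_2 \in \Fus(x)$, connect them by an unstable segment followed by a stable segment within the $us$-leaf (global product structure / the bifoliation of $us$-leaves gives such a path of comparable length). Apply the unstable and stable Hölder estimates to each piece and compose; since a composition of two Hölder maps is Hölder, and the holonomy maps relating the leaf-distances to the $us$-distance are themselves bi-Hölder (this is where $C^{1+\alpha}$ regularity is used — stable and unstable holonomies of a $C^{1+\alpha}$ partially hyperbolic system are Hölder), one gets the bi-Hölder bound on $h|_{\Fus(x)}$.

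The main obstacle I expect is controlling the constants uniformly across all leaves and all pairs of points, and in particular handling the passage between "leaf distance" and "ambient $\bbR^3$ distance" cleanly: the quasi-isometry estimate $d_{\cF}(x,y) < Q\|x-y\| + Q$ has an additive constant, so the transfer of a bounded ambient distance to a bounded leaf distance is fine, but one must check that after the rescaling argument the error terms (the $+R$ from $h$ being a bounded distance from the identity, the $+Q$ from quasi-isometry) do not spoil the exponent — they only affect the multiplicative constant $C_1$, not $\alpha$, provided one works at unit scale or larger. A secondary subtlety is that the contraction rates of $f$ along $\Fu$ are only pointwise (not absolute) bounds, so after passing to a sufficiently high iterate one should invoke the standard fact that the rates become uniformly bounded away from $1$; this is where "replacing $f$ by an iterate" is used, consistent with the reduction already in force at the start of \cref{sec:lami}.
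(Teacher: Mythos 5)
Your proposal captures the same core mechanism as the paper: run the dynamics until the pair of points reaches a fixed scale, use the semiconjugacy to relate the number of iterates needed on the $f$-side and the $A$-side, and balance the two rates to extract a H\"older exponent; and the $us$-case is likewise dispatched from the $u$- and $s$-cases in one line. The one genuine difference in mechanism is how you pass information between $h$ and $h^{-1}$ at the reference scale. The paper proves a compactness lemma on the minimal lamination $\Lam$: there is $\delta_0>0$ such that $d_{\Au}(h(x_1),h(x_2))<\delta_0$ forces $d_{\Fu}(x_1,x_2)<1$ for $x_1,x_2\in\Fu(x)\subset\Lam$, which it gets by taking a convergent subsequence of counterexamples and contradicting injectivity of $h$ on unstable leaves. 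You instead invoke quasi-isometry of $\Fu$-leaves together with $\|h-\mathrm{id}\|_\infty<\infty$ on the universal cover to bound the leaf distance on one side by a constant once the other side is at unit ambient scale; this is equally valid and arguably more elementary since it avoids the limiting argument, though it produces a larger constant (which is harmless). Two small slips worth fixing in a careful write-up: (i) for unstable leaves, $f^{-n}$ with $n>0$ contracts, so to bring small distances up to unit scale you iterate \emph{forward} (your signs work out only if one reads $n<0$ throughout, which is confusing), and for the $d_{\Fu}\le C_1\, d_{\Au}^\alpha$ direction the contraction after transfer is controlled by the \emph{lower} expansion rate $\mu_1=\inf m(Df|_{E^u})$, not $\mu_2$; the paper's $\mu$ is exactly this infimum. (ii) For the $us$-case you do not need $C^{1+\alpha}$-regularity of stable/unstable holonomies: inside a $us$-leaf the intrinsic metric is bi-Lipschitz comparable to the sum of the stable and unstable leaf-distances along the product path, because the angle between $E^s$ and $E^u$ is bounded away from zero, so composing the two H\"older estimates already suffices. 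Finally, note that both your argument and the paper's implicitly treat the estimate at small scale; the stated inequalities with a single pair $(C_1,\alpha)$, $\alpha<1$, cannot hold for arbitrarily large leaf distances, and this local reading is the intended one.
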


\begin{proof}
	We first prove the case $\sigma=u$ and $d_{\Fu}(x_1,x_2)\leq C\cdot d_{\Au}(h(x_1),h(x_2))^{\alpha}$. 
	There exists $\delta_0>0$, such that for every $x_1,x_2\in\Fu(x)\subset\Lam$, if $d_{\Au}(h(x_1),h(x_2))<\delta_0$, then $d_{\Fu}(x_1,x_2)<1$. Otherwise, there exist $x^n_1,x^n_2\in\Fu(x^n)\subset\Lam$ such that $d_{\Fu}(x^n_1,x^n_2)=1$ and $d_{\Au}(h(x^n_1),h(x^n_2))\rightarrow0$. Taking a subsequence if necessary, we have $x^n_1\rightarrow y_1$ and $x^n_2\rightarrow y_2$ with $y_2\in\Fu(y_1)$, $d_{\Fu}(y_1,y_2)=1$, and $h(y_1)=h(y_2)$. This contradicts the fact that $h$ is a homeomorphism on $\Fu(y_1)$.
	
	Now we assume that $d_{\Au}(h(x_1),h(x_2))\ll\delta_0$. Let $k$ be the largest positive number such that 
	$$
	d_{\Au}(A^k\circ h(x_1),A^k\circ h(x_2))<\delta_0.
	$$
	Then we have 
	$$
	d_{\Au}(h(x_1),h(x_2))\geq
	\exp\left(-(k+1)\cdot\lam^u(A)\right)\cdot\delta_0.
	$$
	
	On the other hand, from the semiconjugacy and $d_{\Au}(A^k\circ h(x_1),A^k\circ h(x_2))<\delta_0$, we have
	$d_{\Fu}(f^k(x_1),f^k(x_2))<1$. This implies 
	$$
	d_{\Fu}(x_1,x_2)<\mu^{-k},
	$$
	where $\mu=\inf_{z\in\mathbb{T}^3}m(Df|_{E^u(z)})>1$.
	
	If $\mu\geq\exp\lam^u(A)$, then we have
	$$
	d_{\Fu}(x_1,x_2)<\frac{\exp\lam^u(A)}{\delta_0}\cdot d_{\Au}(h(x_1),h(x_2)).
	$$
	Otherwise, we take $0<\alpha<1$ such that $\exp(\alpha\lam^u(A))<\mu$. Then we have
	$$
	d_{\Fu}(x_1,x_2)<\mu^{-k}<\exp\left(-k\alpha\lam^u(A)\right)
	\leq\frac{\exp(-\alpha\lam^u(A))}{\delta_0^{\alpha}}\cdot d_{\Au}(h(x_1),h(x_2))^{\alpha}.
	$$
	
	The proof of the other inequality and the case $\sigma=s$ are the same. If $h|_{\Lam}$ is bi-H\"older continuous on every leaf of $\Fs$ and $\Fu$, then it is bi-H\"older continuous on every leaf of $\Fus$. 
\end{proof}

\begin{prop}\label{prop:c-continuity}
	Let $x,y\in\Lam$ and a sequence of points $x_n\in\Fus(x)\cap\Fc_+(y)$ such that 
	$$
	x_{n+1}\in(y,x_n)^c \qquad {\it and} \qquad
	\lim_{n\rightarrow\infty}d_{\Fc}(x_n,y)=0.
	$$
	For every $z\in\Fc(x)\cap\Lam$ satisfying $h(z)\neq h(x)$, there exists $\delta_z>0$, such that if we denote $h^{us}_{x,x_n}:\Fc(x)\rightarrow\Fc(x_n)$ the holonomy map induced by $\Fus$ in $\Lam$ from $x$ to $x_n$, and $z_n=h^{us}_{x,x_n}(z)$, then
	$$
	d_{\Fc}(x_n,z_n)\geq\delta_z.
	$$
	The same conclusion holds for the sequence of points $x_n\in\Fus(x)\cap\Fc_-(y)$.
\end{prop}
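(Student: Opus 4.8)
The plan is to transport the $\Fus$-holonomy through the semiconjugacy into the linear model, where it becomes rigid. Fix linear coordinates on $\bbR^3$ adapted to the eigenspace splitting $E^s(A)\oplus E^c(A)\oplus E^u(A)$ of the linear part $A$, and let $\pi_c\colon\bbR^3\to\bbR$ denote the projection onto the central coordinate. In these coordinates every leaf of $\Aus$ is an affine plane $\{\pi_c=\mathrm{const}\}$ and every leaf of $\Ac$ is an affine line transverse to all of them; consequently the $\Aus$-holonomy between any two center leaves of $A$ is a translation in the $\pi_c$-coordinate, and along a leaf of $\Ac$ the intrinsic leaf metric coincides with Euclidean distance. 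This rigidity of the linear $us$-holonomy is the one substantive ingredient; the rest is bookkeeping on the universal cover.

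Lift $f$, $h$, $A$, and the invariant foliations to $\bbR^3$, writing $\tilde h$ for the lifted semiconjugacy, which is $\bbZ^3$-equivariant and hence uniformly continuous. Fix a lift $\tilde x$ of $x$, let $\tilde z\in\Fc(\tilde x)$ be the lift of $z$, and for each $n$ fix a lift $\tilde x_n$ of $x_n$ with $\tilde x_n\in\Fus(\tilde x)$; let $\tilde z_n$ be the unique point of $\Fus(\tilde z)\cap\Fc(\tilde x_n)$. The intersection is a single point because $\Fus(\tilde z)$ is a leaf of the lifted $us$-lamination, which meets every center leaf of $\bbR^3$ exactly once (\cref{sec:univ}), and for the same reason $\tilde z_n$ projects to $z_n=h^{us}_{x,x_n}(z)$. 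Now $\tilde h$ carries center leaves of $f$ onto center leaves of $A$, and by propositions~\ref{prop:utou} and~\ref{prop:stos} it carries $\Fus$-leaves onto $\Aus$-leaves; hence $\tilde z_n\in\Fus(\tilde z)$ gives $\pi_c(\tilde h(\tilde z_n))=\pi_c(\tilde h(\tilde z))$, likewise $\pi_c(\tilde h(\tilde x_n))=\pi_c(\tilde h(\tilde x))$, while $\tilde z_n\in\Fc(\tilde x_n)$ puts $\tilde h(\tilde z_n)$ and $\tilde h(\tilde x_n)$ on the common affine line $\Ac(\tilde h(\tilde x_n))$. Therefore
\[
    \big\|\tilde h(\tilde x_n)-\tilde h(\tilde z_n)\big\|
    \;=\;\big|\pi_c(\tilde h(\tilde x))-\pi_c(\tilde h(\tilde z))\big|
    \;=:\;\eta ,
\]
a value independent of $n$, and $\eta>0$ since $h(z)\ne h(x)$ forces $\tilde h(\tilde z)\ne\tilde h(\tilde x)$ while both already lie on the single affine line $\Ac(\tilde h(\tilde x))$.

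To conclude, use uniform continuity of $\tilde h$ to pick $\delta>0$ with $\|p-q\|<\delta\Rightarrow\|\tilde h(p)-\tilde h(q)\|<\eta$. As $\|\tilde h(\tilde x_n)-\tilde h(\tilde z_n)\|=\eta$ for every $n$, this forces $\|\tilde x_n-\tilde z_n\|\ge\delta$; and since the center leaves of $f$ are injectively immersed lines (by leaf conjugacy of $f$ to $A$), the covering $\bbR^3\to\bbT^3$ restricts to an isometry from $\Fc(\tilde x_n)$ onto $\Fc(x_n)$ sending $\tilde z_n$ to $z_n$, so
\[
    d_{\Fc}(x_n,z_n)\;=\;d_{\Fc}(\tilde x_n,\tilde z_n)\;\ge\;\|\tilde x_n-\tilde z_n\|\;\ge\;\delta ,
\]
and $\delta_z=\delta$ works. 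The case $x_n\in\Fc_-(y)$ is identical. The step I expect to need the most care is the parenthetical identification above---that $\Fus(\tilde z)$ meets each lifted center leaf exactly once and that $\Fus(\tilde z)\cap\Fc(\tilde x_n)$ really projects to $h^{us}_{x,x_n}(z)$---which is precisely where the structural description of the $us$-lamination established in \cref{sec:univ} is used; everything else is immediate once the linear $us$-holonomy is recognized as a translation in the central coordinate.
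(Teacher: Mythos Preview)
Your argument is correct. Its core observation matches the paper's: since $h$ carries $\Fus$-leaves to linear $\Aus$-leaves (item~(4) of \cref{prop:su-lamination}), the quantity $d_{\Ac}(h(x_n),h(z_n))$ equals $d_{\Ac}(h(x),h(z))=:\eta>0$ for every $n$. Where you diverge is in how you extract the lower bound on $d_{\Fc}(x_n,z_n)$. You pass to the universal cover and invoke uniform continuity of the lifted semiconjugacy to get $\|\tilde x_n-\tilde z_n\|\ge\delta$ directly, then use that leaf distance dominates ambient distance. The paper instead stays on $\bbT^3$ and uses monotonicity of $h$ along center leaves together with the limit $x_n\to y$: in the case $z\in\Fc_+(x)$ it fixes a point $w_1\in\Fc_+(y)$ with $d_{\Ac}(h(y),h(w_1))=\eta$, argues that $w_1$ is trapped in $(x_n,z_n)^c$ once $n\ge N_1$, and sets $\delta_z$ to be the minimum of the finitely many initial $d_{\Fc}(x_i,z_i)$ and $d_{\Fc}(x_{N_1},w_1)$; a symmetric argument with $y$ in place of $w_1$ handles $z\in\Fc_-(x)$. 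Your route is shorter, avoids the case split, and in fact never uses the accumulation point $y$ or the ordering hypothesis $x_{n+1}\in(y,x_n)^c$ at all---so you are proving a slightly stronger statement than the one asserted. The paper's route makes the geometry of the center segment more explicit and avoids lifting, at the cost of the extra bookkeeping.
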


\begin{proof}
    We denote $\delta_1=d_{\Ac}(h(x),h(x))>0$ and assume $x=x_+$. If $z\in\Fc_+(x)$, then $h(z)\in\Ac_+(h(x))$.
    Since $\lim_{n\rightarrow\infty}d_{\Fc}(x_n,y)=0$, we have $y=y_+$.
    There exists $w_1\in\Fc_+(y)$, such that $d_{\Ac}(h(y),h(w_1))=\delta_1$. Moreover, there exists $N_1>0$, such that
    $
    d_{\Ac}(h(x_{N_1}),h(y))\leq\delta_1/2.
    $
    This implies $w_1\in(x_n,z_n)$ and $[x_{N_1},w_1]^c\subset[x_n,z_n]^c$ for every $n\geq N_1$. So we define
    $$
    \delta_z=\min\left\lbrace d_{\Fc}(x_1,z_1),\cdots\cdots, d_{\Fc}(x_{N_1-1},z_{N_1-1}), d_{\Fc}(x_{N_1},w_1)
    \right\rbrace.
    $$
    
    If $z\in\Fc_-(x)$, then $h(z)\in\Ac_-(h(x))$. Since $\lim_{n\rightarrow\infty}d_{\Fc}(x_n,y)=0$, there exists $N_2>0$, such that $d_{\Ac}(h(x_{N_2}),h(y))\leq\delta_1/2$. This implies $y\in(z_n,x_n)^c$ and $[z_{N_2},y]^c\subset[z_n,x_n]^c$ for every $n\geq N_2$. So we define
    $$
    \delta_z=\min\left\lbrace d_{\Fc}(x_1,z_1),\cdots\cdots, d_{\Fc}(x_{N_2-1},z_{N_2-1}), d_{\Fc}(z_{N_2},y)
    \right\rbrace.
    $$
    This proves the case $x=x_+$. The proof for $x=x_-$ is the same.
\end{proof}

%
%
%
%
%
%

Define the real number
\begin{math}
    \lam^- = \inf \{ \lam^c(p) : p \in \Per(f|_\Lam) \}.
\end{math}

\begin{lemma}\label{lem:per-exp}
    If $p \in \Per(f|_\Lam)$, then $\lam^c(p)\leq0$.
	This implies $\lam^-\leq0$.
\end{lemma}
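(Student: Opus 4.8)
The plan is to lift everything to the universal cover and show that $\lam^c(p)>0$ is incompatible with $p$ being an endpoint of the center segment $h\inv(h(p))$, exploiting the monotonicity of the semiconjugacy along center leaves. Lift $f$, $A$ and $h$ to $\bbR^3$, keeping the same letters for the lifts, and fix a lift $\tp$ of $p$ together with $z\in\bbZ^3$ and the period $k\ge 1$ with $f^k(\tp)=\tp+z$. Writing $T_z\colon x\mapsto x+z$, set $g=T_{-z}\circ f^k$ and $A'=T_{-z}\circ A^k$. Then $g$ is a diffeomorphism of $\bbR^3$ fixing $\tp$, the map $A'$ is affine and fixes $v:=h(\tp)$, and the identities $h(x+z)=h(x)+z$ and $h\circ f=A\circ h$ give $h\circ g=A'\circ h$. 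Since a translation has trivial derivative, $\|Dg|_{\Ec(\tp)}\|=\|Df^k|_{\Ec(p)}\|=e^{k\lam^c(p)}$, and since the center eigenvalue of $A$ has modulus $e^{\lam^c(A)}<1$, the restriction of $A'$ to the affine line $\Ac(v)$ is a contraction with fixed point $v$; in particular $(A')^n(w)\to v$ for every $w\in\Ac(v)$.

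Next I would bring in the hypothesis $p\in\Lam$. Since $\Lam=\bigcup_x\{x_+,x_-\}$, the point $p$ must be an endpoint of $h\inv(h(p))$ (a single point or a compact center segment, by \cref{cor:interval}); say $p=p_+$, the case $p=p_-$ being symmetric. Lifting, $\tp$ is the upper endpoint of $h\inv(v)\subset\Fc(\tp)$, so $h\inv(v)\cap\Fc_+(\tp)=\emptyset$. Because $h$ is monotone along center leaves (\cref{prop:monotonic}, with orientations fixed accordingly), it maps $\Fc_+(\tp)$ into $\Ac_+(v)$ with $h(x)\neq v$ for all $x\in\Fc_+(\tp)$. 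The leaf $\Fc(\tp)$ is a properly embedded line (quasi-isometric embedding), $g$ restricts to an orientation-preserving homeomorphism of it fixing $\tp$, and $h$ conjugates this restriction to $A'|_{\Ac(v)}$. Hence for $x\in\Fc_+(\tp)$ we get $g^n(x)\in\Fc_+(\tp)$, $g^n(x)\neq\tp$, and $h(g^n(x))=(A')^n(h(x))\to v$; the monotonicity of $h$ on the half-leaf then forces $g^n(x)\to\tp$, since otherwise some subsequence $g^{n_j}(x)$ would stay at leaf-distance at least $\delta>0$ from $\tp$, hence lie beyond the point $q_\delta\in\Fc_+(\tp)$ at distance $\delta$ from $\tp$, giving $h(g^{n_j}(x))\ge h(q_\delta)>v$ in contradiction with $h(g^n(x))\to v$.

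Finally, suppose $\lam^c(p)>0$. Then the $C^1$ interval map $g|_{\Fc(\tp)}$ has derivative $e^{k\lam^c(p)}>1$ at its fixed point $\tp$, so $\tp$ is repelling: there is $\ep>0$ with $d_{\Fc}(g(x),\tp)>d_{\Fc}(x,\tp)$ whenever $x\in\Fc_+(\tp)$ and $0<d_{\Fc}(x,\tp)\le\ep$. Picking such an $x_0$, the orbit $g^n(x_0)$ converges to $\tp$ by the previous step, so it eventually enters and remains in this range while never meeting $\tp$; but then $n\mapsto d_{\Fc}(g^n(x_0),\tp)$ is eventually strictly increasing, contradicting $g^n(x_0)\to\tp$. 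Therefore $\lam^c(p)\le 0$, and since $\Per(f|_\Lam)\neq\emptyset$ by \cref{prop:su-lamination}(2), the number $\lam^-$ is an infimum over nonpositive quantities, so $\lam^-\le 0$.

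The step I expect to be the crux is the convergence $g^n(x)\to\tp$ on the center half-leaf pointing \emph{away} from $h\inv(v)$: this is exactly where the defining property of $\Lam$ — that $\tp$ is an endpoint, not an interior point, of $h\inv(v)$ — combines with monotonicity of $h$ to separate $\tp$ from the rest of that half-leaf, making $\tp$ an attractor for $g$ there and thereby excluding $\lam^c(p)>0$. On the other side of $\tp$ (inside $h\inv(v)$) no such conclusion is available, which is why orientation preservation is needed to keep the forward orbit of a point of $\Fc_+(\tp)$ inside $\Fc_+(\tp)$.
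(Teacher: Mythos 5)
Your proof is correct and takes essentially the same approach as the paper: both use the semiconjugacy, the monotonicity of $h$ along center leaves, and the fact that $A$ contracts the center direction to produce a point on $\Fc(p)$ distinct from $p$ whose forward orbit converges to $p$, which rules out $\lam^c(p)>0$. You run the monotonicity argument with an arbitrary point of the half-leaf $\Fc_+(\tp)$ disjoint from $h\inv(h(\tp))$ (exploiting that $\tp$ is an endpoint, not an interior point, of that preimage segment), whereas the paper invokes a sequence of lamination points in $\Lam\cap\Fc(p)$ accumulating on $p$; your variant avoids having to justify the existence of that sequence on the correct side of $p$, but the mechanism and conclusion are the same.
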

\begin{proof}
	Let $p\in\Per(f|_\Lam)$ with period $\pi$. Since $\Lam=\bar Z = \bigcap_{x \in \bbT^3} \delc h \inv(h(x))$ and $us$-saturated, there exists a sequence of points $x_n\in Z$, such that $x_n\in\Fc(p)$ and $x_n$ converge to $p$ in $\Fc(p)$. Moreover, for every $n$, there exists $k_n>0$, such that $A^{k_n}(h(x_1))$ is between $h(x_n)$ and $h(p)$ in $\Ac(h(p))$.
	From the semiconjugacy and $x_n\rightarrow p$ in $\Fc(p)$, we have 
	$$
	\lim_{k\rightarrow+\infty}d(f^{k\pi}(x_1),p)=0.
	$$
	This implies $\lam^c(p)\leq0$ for every $p \in \Per(f|_\Lam)$.
\end{proof}

\begin{prop} \label{prop:ergper}
	If $\mu$ is an ergodic measure supported on $\Lam$ and $\lam^c(\mu)<0$, then there exists a sequence of periodic points $q_n\in\Lam$, such that 
	$$
	\lim_{n\rightarrow\infty}\lam^c(q_n)=\lam^c(\mu).
	$$
    In particular, if $\mu$ is an ergodic measure supported on $\Lam,$
    then its central Lyapunov exponent satisfies $\lam^c(\mu) \ge \lam^-.$
\end{prop}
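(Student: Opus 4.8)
The plan is to prove the statement by approximating the ergodic measure $\mu$ with $\lam^c(\mu) < 0$ by periodic orbits whose center exponents converge to $\lam^c(\mu)$; the final inequality $\lam^c(\mu) \ge \lam^-$ then follows immediately from the definition of $\lam^-$ as an infimum over periodic points in $\Lam$, combined with \cref{lem:per-exp}: if $\lam^c(\mu) \ge 0$ then $\lam^c(\mu) \ge 0 \ge \lam^-$ is automatic, and if $\lam^c(\mu) < 0$ then $\lam^c(\mu) = \lim \lam^c(q_n) \ge \lam^-$ since each $q_n \in \Per(f|_\Lam)$. So the content is entirely in the first assertion.

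To produce the periodic points, I would invoke a Katok-type closing/shadowing argument adapted to the center direction. The key point is that $f$ is $C^{1+\al}$ (this is where the H\"older hypothesis of \cref{thm:main} is used) and that on $\Lam$ the center bundle $E^c$ is one-dimensional, so for the ergodic measure $\mu$ the center exponent is a genuine Lyapunov exponent and Pesin theory applies: a positive-measure set of points is "hyperbolic in the center direction" in the sense that along stable-center and along center-unstable the relevant nonuniform estimates hold. Since $\mu$ is supported on $\Lam$, which is $us$-saturated and carries the stable and unstable foliations of $f$ as genuine sub-foliations, one can use the stable and unstable holonomies inside $\Lam$ to perform the closing: take a recurrent point $x$ in a Pesin block, follow the orbit until it returns close to $x$ in the center leaf (using that $d_{\Fc}$ contracts under forward iteration since $\lam^c(\mu) < 0$), and then close up the orbit using that the return is already on the same $us$-leaf up to a controlled stable/unstable correction. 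The resulting periodic orbit lies in $\Lam$ because $\Lam$ is closed and $us$-saturated, and its center multiplier is, by the usual bounded-distortion estimate for $C^{1+\al}$ maps, within $o(1)$ of $\exp(\pi \cdot \lam^c(\mu))$ where $\pi$ is the period; letting the return times grow gives $\lam^c(q_n) \to \lam^c(\mu)$.

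Alternatively — and this may be cleaner given the machinery already in the paper — one transports the problem to the linear model via the semiconjugacy. By \cref{prop:su-lamination}(4), $h$ restricted to $\Lam$ conjugates the $us$-foliations of $f$ to those of $A$, and by \cref{prop:su-Holder} this conjugacy is bi-H\"older along $us$-leaves. One pushes $\mu$ forward to an $A$-invariant measure $h_*\mu$ on the minimal linear $us$-lamination, uses that the linear side is uniformly hyperbolic so that $h_*\mu$ is approximated by linear periodic orbits (Anosov closing lemma), and then pulls these periodic orbits back through $h\inv$: each linear periodic point $v$ with $A^k v = v + z$ has $h\inv(v)$ a single point or a center segment, whose endpoints are $f^k$-periodic mod $z$ by the leaf conjugacy (exactly as in \cref{prop:percover}), and these endpoints lie in $\Lam$. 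The center exponent of such a pulled-back periodic orbit need not equal $\lam^c(A)$, so one still needs the bounded-distortion control to see that the center multipliers converge to $\exp(\lam^c(\mu))$; this again uses $C^{1+\al}$ and the fact that the orbit spends a controlled fraction of time near the generic behavior dictated by $\mu$.

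The main obstacle I anticipate is making the closing argument respect the lamination $\Lam$ while simultaneously controlling the center multiplier. The center direction is not hyperbolic in the partially hyperbolic splitting of $f$, so one cannot simply quote the Anosov closing lemma for $f$; one genuinely needs Pesin-theoretic shadowing for the (nonuniformly) hyperbolic center, and one must check that the shadowing orbit can be taken inside $\Lam$ — this is where the $us$-saturation and minimality of $\Lam$, together with the local product structure of stable and unstable leaves inside $\Lam$, are essential. A secondary technical point is ensuring that the periodic points obtained are genuinely periodic for $f$ and not merely for a power or a lift; this is handled exactly as in the proof of \cref{prop:percover}, by taking endpoints of the relevant center segments when $h\inv$ of the corresponding linear periodic point is not a singleton.
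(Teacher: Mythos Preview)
Your first approach is essentially the paper's: take a $\mu$-typical point, apply Pesin/Katok closing (using $C^{1+\alpha}$) to obtain periodic points $q_n$ with $\lam^c(q_n)\to\lam^c(\mu)$, and then argue $q_n\in\Lam$. The one place where the paper is sharper than your sketch is the last step. You propose to perform the closing ``inside $\Lam$'' using the $us$-holonomies, which would require checking that the shadowing orbit can be taken to lie in $\Lam$; the paper sidesteps this entirely. It does ordinary Pesin closing in $\bbT^3$, obtaining $q_n$ a priori anywhere, and then observes that the Pesin stable manifold of $q_n$ transversely intersects $\Fu(x)$ at some point $y$. Since $x\in\Lam$ and $\Lam$ is $us$-saturated, $y\in\Fu(x)\subset\Lam$; since $f^{k\pi(q_n)}(y)\to q_n$ and $\Lam$ is compact and invariant, $q_n\in\Lam$. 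This avoids having to control the location of the shadowing orbit during the closing and makes the ``respecting the lamination'' obstacle you flagged disappear. Your alternative route through the semiconjugacy is not used and, as you note, would not by itself give the exponent control.
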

\begin{proof}
	Assume $\mu$ is an ergodic measure supported on $\Lam$ with $\lam^c(\mu)<0$.
    Take a $\mu$-typical point $x.$
    This point is recurrent and has a Pesin stable manifold.
    The shadowing lemma of Pesin theory
    leads to a periodic point $q_n$ of period $\pi(q_n)$ with center Lyapunov exponent
    satisfying 
    $$
    |\lam^c(q)-\lam^c(\mu)|<\min\left\lbrace \frac{1}{n},\frac{|\lam^c(\mu)|}{2}\right\rbrace.
    $$
    
    Moreover, the Pesin stable manifold of $q_n$ transversely intersects the unstable manifolds $\Fu(x)$ of $x$. If $y$ denotes the point of intersection, then
    $$
    y\in\Fu(x)\subset\Fus(x)\subset\Lam.
    $$  
    Since $\Lam$ is a compact invariant set, and $f^{k\pi(q_n)}(y)\rightarrow q_n$ as $k\rightarrow+\infty$, we have $q_n\in\Lam$.
\end{proof}

\begin{prop} \label{prop:lamneg}
    The inequality $\lam^- \leq\lam^c(A)<0$ holds.
\end{prop}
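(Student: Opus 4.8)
The bound $\lam^c(A)<0$ is part of our standing hypothesis on $A$, so the content is the inequality $\lam^-\le\lam^c(A)$. Since a periodic orbit in $\Lam$ is itself an ergodic measure and, by \cref{prop:ergper}, every ergodic measure supported on $\Lam$ has central exponent at least $\lam^-$, we have $\lam^-=\inf\{\lam^c(\mu):\mu\ \text{ergodic},\ \supp\mu\subset\Lam\}$. So it suffices to exhibit a single ergodic measure $\mu$ on $\Lam$ with $\lam^c(\mu)\le\lam^c(A)$, and the plan is to find one using a topological-entropy identity together with Ruelle's inequality for $f\inv$. First observe that $h$ restricts to an at-most-two-to-one surjective semiconjugacy from $(f|_\Lam,\Lam)$ onto $(A,\bbT^3)$: for $v\in\bbT^3$ the fiber $h\inv(v)\cap\Lam$ is contained in $\{x_+,x_-\}$, and $h(\Lam)\supseteq\overline{h(Y)}=\bbT^3$ because $h(Y)$ meets every linear center leaf in a set of countable complement (\cref{prop:countable}) and $\Lam$ is compact. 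Since a finite-to-one factor map preserves topological entropy, $h_{\rm top}(f|_\Lam)=h_{\rm top}(A)=\lam^u(A)$, the last equality being the entropy of a hyperbolic automorphism of $\bbT^3$ with a single expanding eigenvalue. By the variational principle and the ergodic decomposition, we may then pick ergodic measures $\mu_n$ on $\Lam$ with $h_{\mu_n}(f)\to\lam^u(A)$.

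The second ingredient is that the semiconjugacy does not distort the stable exponent: every $f$-invariant ergodic $\mu$ supported on $\Lam$ satisfies $\lam^s(\mu)=\lam^s(A)$. Indeed, since $\Lam$ is $us$-saturated, a stable leaf through a point of $\Lam$ lies entirely in $\Lam$, and by \cref{prop:stos} (equivalently \cref{prop:su-lamination}(4)) the map $h$ carries it homeomorphically onto a stable leaf of $A$; moreover $h$ lies within a bounded distance $R$ of the identity, and the stable leaves of $f$ are uniformly quasi-isometrically embedded. Combining this with bounded distortion of $f$ along one-dimensional stable manifolds (available because $f$ is $C^{1+\al}$), one shows that for a $\mu$-typical point $x$ and a suitably chosen point $y$ on its stable leaf the exponential rates $\tfrac1n\log d_{\Fs}(f^n x,f^n y)$ and $\tfrac1n\log d_{\As}\!\big(A^n h(x),A^n h(y)\big)$ must coincide; as the second rate is $\lam^s(A)$, this gives $\lam^s(\mu)=\lam^s(A)$.

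Now I would combine these. Ruelle's inequality for $f\inv$, whose positive Lyapunov exponents with respect to $\mu_n$ are $-\lam^s(\mu_n)$ and, when $\lam^c(\mu_n)<0$, also $-\lam^c(\mu_n)$, gives
\[
    h_{\mu_n}(f)\ =\ h_{\mu_n}(f\inv)\ \le\ -\lam^s(\mu_n)+\max\{0,-\lam^c(\mu_n)\}\ =\ -\lam^s(A)+\max\{0,-\lam^c(\mu_n)\}.
\]
Since $\lam^s(A)+\lam^c(A)+\lam^u(A)=\log|\det A|=0$ and $\lam^c(A)<0$, we have $-\lam^s(A)<\lam^u(A)$; as $h_{\mu_n}(f)\to\lam^u(A)$, for all large $n$ the maximum above is positive, so $\lam^c(\mu_n)<0$ and $h_{\mu_n}(f)\le-\lam^s(A)-\lam^c(\mu_n)$. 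By \cref{prop:ergper}, $\lam^-\le\lam^c(\mu_n)\le-\lam^s(A)-h_{\mu_n}(f)$ for every such $n$; letting $n\to\infty$ yields $\lam^-\le-\lam^s(A)-\lam^u(A)=\lam^c(A)$.

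The main obstacle I expect is the stable-exponent identity $\lam^s(\mu)=\lam^s(A)$ of the second paragraph: one must upgrade the merely bounded (not infinitesimal) control coming from the semiconjugacy into an exact statement about contraction rates, carefully interleaving it with the quasi-isometry of the leaves and with bounded distortion, while keeping in mind that the identification of stable leaves used there rests on the standing non-accessibility hypothesis through \cref{prop:stos}. Checking that $h|_\Lam$ is genuinely surjective and at most two-to-one, so that the topological entropies agree, is a smaller point, but it too relies on the structure of $\Lam$ established earlier.
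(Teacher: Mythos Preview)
Your overall plan---exhibit ergodic measures on $\Lam$ whose entropy approaches $\lam^u(A)$, then combine Ruelle's inequality for $f\inv$ with the relation $\lam^s(A)+\lam^c(A)+\lam^u(A)=0$---is a reasonable alternative to the paper's argument, which simply quotes \cite{ures2012intrinsic} for the measure of maximal entropy $\mu_0$ (namely, $\supp\mu_0\subset\Lam$ and $\lam^c(\mu_0)\le\lam^c(A)$) and then invokes \cref{prop:ergper}. The entropy identity $h_{\rm top}(f|_\Lam)=\lam^u(A)$ via the at-most-two-to-one factor $h|_\Lam$ is fine.

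The gap is the claimed identity $\lam^s(\mu)=\lam^s(A)$ for every ergodic $\mu$ on $\Lam$, which your argument needs in an essential way (an inequality in either direction would not be enough to close the estimate). Your sketch iterates forward: then both $d_{\Fs}(f^nx,f^ny)$ and $d_{\As}(A^nh(x),A^nh(y))$ tend to $0$, and the only control coming from ``$h$ at bounded distance from the identity'' is an \emph{additive} bound $\bigl|\,\|h(a)-h(b)\|-\|a-b\|\,\bigr|\le 2R$, which says nothing about ratios of quantities tending to zero. Iterating backward does give something: quasi-isometry of $\Fs$-leaves together with the bounded-distance property forces
\[
\tfrac1n\log d_{\Fs}\!\big(f^{-n}x,f^{-n}y\big)\ \longrightarrow\ -\lam^s(A)
\]
for \emph{every} pair $x\ne y$ on the same stable leaf. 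But this is a statement about the large-scale escape rate of \emph{arc length} under $f^{-1}$, not about the pointwise derivative cocycle. Concretely, for a fixed point $p\in\Lam$ the induced map on $\Fs(p)$ (in arc-length coordinates) is a contraction $g$ with $g(0)=0$; the displayed limit constrains the behaviour of $g^{-1}$ at infinity, while $\lam^s(p)=\log g'(0)$ is a purely local quantity and is not determined by it. Nothing in the standing hypotheses prevents $\|Df|_{\Es(p)}\|$ from differing from $e^{\lam^s(A)}$ at periodic points of $\Lam$, so $\lam^s(\mu)=\lam^s(A)$ need not hold, and bounded distortion along $\Fs$ does not bridge this gap. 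Without that identity (and without volume preservation, which would give $\sum_i\lam_i(\mu)=0$), your Ruelle inequality only yields $\lam^c(\mu_n)\le -\lam^s(\mu_n)-h_{\mu_n}(f)$, and the right-hand side need not converge to $\lam^c(A)$.
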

\begin{proof}
    We only need to show that there exists an ergodic measure $\mu$ supported on $\Lam$, such that $\lam^c(\mu)<0$. Actually, we consider the measure $\mu_0$ of maximal entropy of $f$, then 
    \cite{ures2012intrinsic} shows that its support ${\it supp}(\mu_0)\subset\Lam.$
    
   Again, \cite{ures2012intrinsic} 
    shows that $\lam^c(\mu_0)\leq\lam^c(A)<0$.   By Proposition \ref{prop:ergper} and Lemma \ref{lem:per-exp}, we prove that $\lam^-\leq\lam^c(A)<0$.
\end{proof}

\section{Rigidity of center Lyapunov exponents}\label{sec:rigidity}

In this section, we prove Proposition \ref{prop:samelyap}, which states that all periodic points in $\Lam$ have the same center Lyapunov exponent. This implies that $\Lam$ is hyperbolic.
We then use this hyperbolicity to show that $\Lam = \mathbb{T}^3$.

\begin{lemma} \label{lem:remetric}
	For every $\ep > 0$,
	up to changing the metric,
	there is a point $p \in \Per(f|_\Lam)$
	such that
	\[
	\log \| Df|_{\Ec(x)}\| > \lam^c(p)-\ep
	\]
	holds for all $x \in \Lam.$
\end{lemma}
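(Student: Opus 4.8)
The plan is to use the infimum $\lam^- = \inf\{\lam^c(p) : p \in \Per(f|_\Lam)\}$ and exploit that it is actually (nearly) attained while simultaneously controlling the pointwise norm $\log\|Df|_{\Ec(x)}\|$ by passing to an adapted metric. First I would fix $\ep > 0$ and, using the definition of $\lam^-$ as an infimum over periodic points together with the fact (from \cref{prop:lamneg}) that $\lam^- \le \lam^c(A) < 0$, choose a periodic point $p \in \Per(f|_\Lam)$ of period $\pi = \pi(p)$ with $\lam^c(p) < \lam^- + \ep/3$. The key point is that I want to bound the \emph{non-averaged} quantity $\log\|Df|_{\Ec(x)}\|$ from below by something close to $\lam^c(p)$ for \emph{all} $x\in\Lam$, not just along orbits; for this the averaging inherent in Lyapunov exponents must be converted into a pointwise estimate, which is exactly what an adapted (Mather-type) metric accomplishes.

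The main steps, in order. First, observe that for any $x \in \Lam$, every ergodic measure $\mu$ obtained as a weak-$*$ limit of empirical measures along the forward orbit of $x$ is supported on $\Lam$ (since $\Lam$ is compact and $f$-invariant) and, by \cref{prop:ergper} together with \cref{lem:per-exp}, satisfies $\lam^c(\mu) \ge \lam^-$. Hence $\liminf_{n\to\infty} \tfrac1n \sum_{i=0}^{n-1}\log\|Df|_{\Ec(f^i(x))}\| \ge \lam^-$ for every $x \in \Lam$; more precisely, $\lim_{n\to\infty}\tfrac1n\log\|Df^n|_{\Ec(x)}\| \ge \lam^-$ uniformly, since the function $x \mapsto \tfrac1n\log\|Df^n|_{\Ec(x)}\|$ on the compact set $\Lam$ converges, in the relevant one-sided sense, to a limit bounded below by $\lam^-$ (a standard subadditive/uniform-subadditivity argument on the cocycle over the compact invariant set $\Lam$). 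Second, I would invoke the existence of an adapted metric: because $\Ec$ is a one-dimensional continuous subbundle and the cocycle $\log\|Df^n|_{\Ec}\|$ over $\Lam$ has all Lyapunov exponents $\ge \lam^- > -\infty$ and $\le \lam^c(A) < 0$ (in particular is uniformly bounded), a Mather-style construction produces, on a neighbourhood of $\Lam$, a continuous Riemannian metric for which $\log\|Df|_{\Ec(x)}\| > \lam^- - \ep/3$ for all $x \in \Lam$ — one averages $\|Df^n|_{\Ec}\|^{1/n}$ over a large window $n = N$ and takes $N$th-root-type norms, the uniform convergence above guaranteeing the bound is uniform on $\Lam$. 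Third, combining the two estimates, for this new metric and the chosen $p$ we get, for all $x \in \Lam$,
\[
\log\|Df|_{\Ec(x)}\| > \lam^- - \ep/3 > \lam^c(p) - \ep/3 - \ep/3 > \lam^c(p) - \ep,
\]
which is the claimed inequality. (One should note the metric change does not affect the value of $\lam^c(p)$, since Lyapunov exponents are metric-independent, nor does it affect which points are periodic; so the statement is consistent.)

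The step I expect to be the main obstacle is the second one: producing the adapted metric with a \emph{uniform} lower bound valid on all of $\Lam$ at once. The subtlety is that $\Lam$ is merely a compact invariant lamination, not a hyperbolic set a priori (that is the point of the surrounding section), and $f|_\Lam$ need not be uniquely ergodic, so the convergence of $\tfrac1n\log\|Df^n|_{\Ec(x)}\|$ is not to a constant; what saves us is that we only need a one-sided (lower) uniform bound, and for that the relevant fact is that $\liminf$ of the Birkhoff-type averages of a continuous function over a compact invariant set is uniformly bounded below by the infimum of its integrals over invariant measures, which here is $\ge \lam^-$ by \cref{prop:ergper}. I would phrase this via a clean lemma: if $\varphi$ is continuous on a compact $f$-invariant set $\Lam$ and $\int \varphi\, d\nu \ge c$ for every $f$-invariant probability $\nu$ on $\Lam$, then for every $\eta > 0$ there is $N$ with $\tfrac1N\sum_{i=0}^{N-1}\varphi(f^i(x)) > c - \eta$ for all $x \in \Lam$ (otherwise a sequence of bad points yields, via empirical measures, an invariant measure with integral $\le c - \eta$, a contradiction). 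Applying this with $\varphi = \log\|Df|_{\Ec}\|$ and $c = \lam^-$ gives the window average; exponentiating and using it to define $\|v\|'_x = \big(\sum_{i=0}^{N-1} \|Df^i v\|^2\big)^{1/2}$ (or a geometric-mean variant) on $\Ec$, then extending continuously off $\Lam$, yields the metric. The rest is bookkeeping.
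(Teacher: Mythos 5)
Your proposal is correct and follows essentially the same approach as the paper: pick a periodic point near the infimum $\lam^-$, use \cref{prop:ergper} to bound Birkhoff averages over $\Lam$ from below, and pass to an adapted metric to convert the averaged bound to a pointwise one. The paper's actual proof is far terser (it simply asserts ``up to changing the metric'' without spelling out the empirical-measure argument or the Mather-type construction); your write-up fills in precisely the missing technical content, though note that for a one-sided \emph{lower} bound the geometric-mean (Birkhoff-window) adapted metric you mention parenthetically is the one that actually works, not the $\ell^2$-sum variant.
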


\begin{proof}
	We have proved that $\lam^-\leq\lam^c(\mu)$ for every ergodic measure $\mu$ supported on $\Lam$. From the definition of $\lam^-$, there exists a sequence of periodic points $p_n\in{\rm Per}(f|_{\Lam})$ such that $\lim_{n\rightarrow\infty}\lam^c(p_n)=\lam^-<0$. 
	
	For every $\ep>0$, up to changing the metric, we have
	$$
	\log \| Df|_{\Ec(x)}\|>\lam^--\frac{\ep}{2}
	$$
	for all $x \in \Lam$. Then we take $p=p_n$ for $n$ large enough which proves the lemma.
\end{proof}

\begin{lemma}\label{lem:heteroclinic}
	There exist two constants $C_2>0$ and $0<\beta<1$, such that for every two periodic points $p,q\in\Lam$, there exist two sequence of points $x_n\in\Fs(p)$, $y_n\in\Fu(x_n)$ with $y_n\in\Fc(q)$, such that
	$$
	\lim_{n\rightarrow\infty}d_{\Fc}(y_n,q)=0,  
	\quad {\it and} \quad
	d_{\Fu}(x_n,y_n)\leq\frac{C_2}{D_n^{\beta}},
	\quad {\it where} \quad
	D_n=d_{\Fs}(p,x_n).
	$$        
	Moreover, for every $\eta>0$, there exists $N_{\eta}>0$, such that for every $n>N_{\eta}$,
	$$
	d_{\Fc}(f^k(y_n),f^k(q))\leq \eta, \qquad \forall k\geq 0.
	$$                                                
\end{lemma}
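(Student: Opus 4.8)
The plan is to produce the points $y_n$ as the $\Fus(p)$-preimages, under the Franks semiconjugacy, of suitable intersection points in the linear model, and then to use the bi-H\"older control of $h$ from \cref{prop:su-Holder} to convert a Diophantine approximation estimate for $A$ into the desired bound. For the existence of the sequences: by \cref{prop:su-lamination}(4) the map $h$ restricts to a homeomorphism $h\colon\Fus(p)\to\Aus(h(p))$, and since $h$ induces a bijection on center leaves, a point of $\Fus(p)$ lies on $\Fc(q)$ precisely when its image lies on $\Ac(h(q))$; hence $h$ identifies $\Fus(p)\cap\Fc(q)$ with $\Aus(h(p))\cap\Ac(h(q))$. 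As the $us$-foliation of the linear automorphism $A$ on $\bbT^3$ is minimal, a short computation in the eigencoordinates of $A$ (using that the projection of $\bbZ^3$ to the center direction is dense in $\bbR$) shows that $\Aus(h(p))\cap\Ac(h(q))$ is dense in the linear center leaf $\Ac(h(q))$, so it accumulates on $h(q)$. Choosing $v_n\in\Aus(h(p))\cap\Ac(h(q))$ with $v_n\to h(q)$ along $\Ac(h(q))$ from the positive side, setting $y_n:=(h|_{\Fus(p)})\inv(v_n)\in\Fus(p)\cap\Fc(q)\subof\Lam$ and $x_n:=\Fs(p)\cap\Fu(y_n)\in\Fs(p)$, the monotonicity of $h$ along center leaves (\cref{prop:monotonic}) forces $y_n\to q_+$; after replacing $q$ by $q_+$ we get $d_{\Fc}(y_n,q)\to0$, with $y_n\in\Fu(x_n)$ and $x_n\in\Fs(p)$ as required.

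The quantitative estimate is the heart of the matter, and I expect it to be the main obstacle. Lift $p$ to $\tp$, diagonalize $A$ with eigencoordinates $(s,c,u)$, and put $r:=\widetilde{h(q)}-h(\tp)$. Each $v\in\Aus(h(p))\cap\Ac(h(q))$ is encoded by a lattice vector $z\in\bbZ^3$, and the constraints translate into: $v\to h(q)$ is equivalent to $c(z)\to-c(r)$, while $d_{\As}(h(p),h(x_n))=|s(r)+s(z)|$ and $d_{\Au}(h(x_n),h(y_n))=|u(r)+u(z)|$. Thus the problem reduces to finding $z_n\in\bbZ^3$ with $(c(z_n),u(z_n))\to(-c(r),-u(r))$, $|s(z_n)|\to\infty$, and convergence rate polynomially controlled by $|s(z_n)|$. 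Because $A$ lies in $SL(3,\bbZ)$ and is hyperbolic, its eigendirections are algebraic, hence Diophantine (non-Liouville), so the inhomogeneous form of Dirichlet's theorem supplies such $z_n$ with $|u(r)+u(z_n)|\le C\,|s(r)+s(z_n)|^{-\beta}$ for a constant $C$ and exponent $\beta\in(0,1)$ depending only on $A$ (one may take $\beta=\half$). Writing $a_n:=d_{\As}(h(p),h(x_n))$ and $b_n:=d_{\Au}(h(x_n),h(y_n))$, we then have $a_n\to\infty$ and $b_n a_n^{\beta}$ bounded by a uniform constant. Feeding this into \cref{prop:su-Holder} gives $D_n=d_{\Fs}(p,x_n)\le C_1 a_n^{\alpha}$, $d_{\Fu}(x_n,y_n)\le C_1 b_n^{\alpha}$, and $D_n\to\infty$, whence $d_{\Fu}(x_n,y_n)\,D_n^{\beta}\le C_1^{1+\beta}\,(b_n a_n^{\beta})^{\alpha}$ is bounded by a uniform $C_2$. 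The difficulty is genuinely in extracting a \emph{rate}: the semiconjugacy only yields a priori H\"older control, and the bound on the size of the unstable jump in terms of the stable depth $D_n$ is, after unwinding, a statement about how well the stable eigendirection of $A$ is approximated by integer vectors.

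For the ``moreover'' clause I would argue entirely on the linear side. For each $k\ge0$, $h(f^k(y_n))=A^k(v_n)$ and $h(f^k(q))=A^k(h(q))$ lie on a common linear center leaf, and since $A$ contracts center distances, $d_{\Ac}(h(f^k(y_n)),h(f^k(q)))=e^{k\lam^c(A)}d_{\Ac}(v_n,h(q))\le d_{\Ac}(v_n,h(q))=:\delta_n$. Since $f$ preserves the center orientation, $f^k(q)=(f^k(q))_+$ is the positive endpoint of its $h$-plateau and $f^k(y_n)$ lies on the positive side of it with $h$-value within $\delta_n$ of $h(f^k(q))$. As $f^k(q)=f^{k\bmod\pi}(q)$ takes only finitely many values (here $\pi$ is the period of $q$), and for each of them the set of points just beyond its plateau whose $h$-value is within $\delta$ shrinks to that endpoint as $\delta\to0$ (upper semicontinuity of $h\inv$), we obtain $d_{\Fc}(f^k(y_n),f^k(q))\le\bar D(\delta_n)$ for a function $\bar D$ with $\bar D(\delta)\to0$ as $\delta\to0$, uniformly in $k$. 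Given $\eta>0$, choosing $N_\eta$ so that $\bar D(\delta_n)\le\eta$ for $n>N_\eta$ completes the proof; note that this argument uses only $\lam^c(A)<0$ and no sign information on $\lam^c(q)$, which is convenient since at this stage we only know $\lam^c(q)\le0$.
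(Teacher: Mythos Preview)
Your overall strategy coincides with the paper's: pass to the linear model via the homeomorphism $h|_{\Fus(p)}$, use a Diophantine approximation estimate for $A$ to control the unstable jump $b_n$ by a negative power of the stable depth $a_n$, and transfer back using the regularity of $h$. There is, however, a concrete error in the transfer step. You invoke \cref{prop:su-Holder} to obtain $D_n\le C_1 a_n^{\alpha}$, but this inequality cannot hold as $a_n\to\infty$: the lift $H$ of $h$ is at bounded distance from the identity and the stable leaves are quasi-isometrically embedded, so $D_n$ and $a_n$ are comparable up to additive and multiplicative constants; in particular $D_n/a_n^{\alpha}\to\infty$ since $\alpha<1$. (The proof of \cref{prop:su-Holder} only treats the small-distance regime, and that is the only regime in which the paper uses it.) The correction is exactly what the paper does: use the finite-distance property of $H$ to get $D_n>a_n-2K$, hence $a_n<2D_n$ for large $n$, and combine this with the legitimate small-scale H\"older bound $d_{\Fu}(x_n,y_n)\le C_1 b_n^{\alpha}\le C' a_n^{-\alpha/2}$ to conclude $d_{\Fu}(x_n,y_n)\le C_2 D_n^{-\alpha/2}$. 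Thus the final exponent is $\beta=\alpha/2$, not the linear Diophantine exponent $\tfrac12$ that your computation produces.

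One smaller point: ``replacing $q$ by $q_+$'' is not quite what is wanted, since the lemma asks for $y_n\to q$, not $y_n\to q_+$. The paper instead first notes that, by minimality of $\Lam$, at least one of $\Fc_\pm(q)$ contains points $z_m\in\Lam$ with $z_m\to q$; if it is the positive side then necessarily $q=q_+$, and one takes $v_n\to h(q)$ from the positive side (squeezing $y_n$ between $q$ and $z_m$). If it is the negative side, one works on that side instead. Your argument for the ``moreover'' clause, via upper semicontinuity of $h\inv$ at the finitely many points $f^j(q)$ together with the linear center contraction, is a valid alternative to the paper's squeeze argument using the forward orbit of $y_1$.
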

\begin{proof}
	Since $\Lam=\bigcup_{x \in \bbT^3}\delc h \inv(h(x))$ is a $us$-minimal set, at least one branch $\Fc_+(q)$ or $\Fc_-(q)$ contains a sequence of points $z_m\in\Lam$, such that $z_m$ converges to $q$ in $\Fc(q)$. We assume $z_m\in\Fc_+(q)\cap\Lam$ for every $m$. Then we have $h(z_m)\neq h(q)$ and $h(z_m)$ converges to $h(q)$ in $\Ac_+(h(q))$.
	
	The points $h(p)$ and $h(q)$ are periodic points $A$. Since $\As$ is an\ linear foliation with algebraic irrational rotation vector on $\bbT^3$, there exist $C_2'>0$ and two sequences of points $x_n'\in\As(h(p))$, $y_n'\in\Au(x_n')$ with $y_n'\in\Ac_+(h(q))$, such that
	$$
	d_{\Ac}(h(q),y_n')\leq\frac{C_2'}{\sqrt{D_n'}}
	\qquad {\rm and} \qquad
	d_{\Au}(y_n',x_n')\leq\frac{C_2'}{\sqrt{D_n'}},
	$$
	where $D_n'=d_{\As}(h(p),x_n')\rightarrow+\infty$ as $n\rightarrow+\infty$.
	
	If we have $p=h\inv (h(p))$, then for every $n$, $h\inv(x_n')$ and $h\inv(y_n')$ are single points, we denote
	$$
	x_n=h\inv(x_n')\in\Fs(p), \qquad {\rm and} \qquad
	y_n=h\inv(y_n')\in\Fu(x_n).
	$$
	
	Otherwise, the set $\delc h \inv(h(p))=\Lam\cap h \inv(h(p))$ consists of two periodic points, and $p$ is one of them. This implies  $h \inv(h(\Fus(p)))\cap\Lam$ contains two $us$-leaves, and one of them is $\Fus(p)$. So there exist unique a pair of points
	$$
	x_n=h\inv(x_n')\cap\Fs(p), \qquad {\rm and} \qquad
	y_n=h\inv(y_n')\cap\Fu(x_n).
	$$
	
	In both cases, we have $y_n\in\Fc_+(q)$. Moreover, for every $h(z_m)\in\Ac_+(h(q))$, there exists $N_m>0$, such that $y_n'$ is contained in the open interval with endpoints $h(q)$ and $h(z_m)$ in $\Ac_+(h(q))$ for every $n\geq N_m$. This implies $y_n$ is between $q$ and $z_m$ in $\Fc_+(q)$. Thus  $\lim_{n\rightarrow\infty}d_{\Fc}(z_m,q)=0$ implies $\lim_{n\rightarrow\infty}d_{\Fc}(y_n,q)=0$.
	
	Finally, let $H:\bbR^3\rightarrow\bbR^3$ be the lifting map of the semiconjugacy $h:\bbT^3\rightarrow\bbT^3$. Then there exists a constant $K>0$, such that $d(H,{\rm Id})<K$. For $D_n'=d_{\As}(h(p),x_n')$, we have 
	$$
	D_n=d_{\Fs}(p,x_n)>d_{\As}(h(p),x_n')-2K=D_n'-2K.
	$$
	So for $n$ large enough, we have $D_n>D_n'/2$. On the other hand, from $d_{\Au}(y_n',x_n')\leq C_2'/\sqrt{D_n'}$ and Propisition \ref{prop:su-Holder}, we have
	$$
	d_{\Fu}(x_n,y_n)\leq C_1\cdot d_{\Au}(x_n,y_n)^{\alpha}
	\leq C_1\cdot(2C_2')^{\alpha}\cdot D_n^{-\frac{\alpha}{2}}.
	$$
	Thus we set $C_2=C_1\cdot(2C_2')^{\alpha}$ and $\beta=\alpha/2$.
	
	Finally, since $y_n\in\Fus(p)\subset\Lam$ converges to $q$ in $\Fc_+(q)$, this implies for every $n$,
	$$
	\lim_{k\rightarrow\infty}d_{\Fc}(f^k(y_n),f^k(q))=0.
	$$
	So for $n=1$, there exists $K_0>0$, such that $d_{\Fc}(f^k(y_n),f^k(q))<\eta$ for every $k\geq K_0$. Let $\pi(q)$ be the period of $q$, and set $N_{\eta}>0$, where $y_{N_{\eta}}$ is contained in $(q,f^{K_0\pi(q)}(y_1))$. Then 
	for every $n>N_{\eta}$,
	$$
	d_{\Fc}(f^k(y_n),f^k(q))\leq \eta, \qquad \forall k\geq 0.
	$$     
\end{proof}

\begin{remark}
	Since $\lim_{n\rightarrow\infty}d_{\Fc}(y_n,q)=0$, we can assume $y_{n+1}\in(q,y_n)^c$ by taking subsequence. This allows us to apply Proposition \ref{prop:c-continuity}.
\end{remark}

Now we fix the constant 
$$
\theta=-\frac{1}{2}\cdot\frac{\beta\cdot\log\left( \sup_{x\in\mathbb{T}^3}\|Df|_{E^s(x)}\|\right) }{\log\left( \inf_{x\in\mathbb{T}^3}m(Df|_{E^u(x)})\right) }\in(0,1).
$$
We can state the main result of this section, which implies that $\Lam$ is a hyperbolic set and equal to $\bbT^3$. The proof of this proposition is similar to Proposition 4.1 of \cite{gs20XXrigidity}.

\begin{prop}\label{prop:samelyap}
	All periodic points in $\Lam$ have the same center Lyapunov exponent and satisfies
	$$
	\lam^c(p)\leq\lam^c(A)<0, \qquad \forall p\in\Per(f|_{\Lam}).
	$$
\end{prop}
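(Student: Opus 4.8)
The plan is to prove the \emph{a priori} weaker inequality $\lam^c(q)\le\lam^-$ for every $q\in\Per(f|_\Lam)$. Since $\lam^c(q)\ge\lam^-$ holds by the very definition of $\lam^-$ (or by Proposition~\ref{prop:ergper} applied to the orbit measure of $q$), and $\lam^-\le\lam^c(A)<0$ by Proposition~\ref{prop:lamneg}, this gives $\lam^c(q)=\lam^-\le\lam^c(A)<0$ for all periodic points in $\Lam$, which is exactly the assertion.

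To prove $\lam^c(q)\le\lam^-$, I would fix $q\in\Per(f|_\Lam)$ and $\gamma>0$, and use Lemma~\ref{lem:remetric} with $\ep=\gamma$: after changing the metric there is $p\in\Per(f|_\Lam)$ with $\lam^c(p)<\lam^-+\gamma$ and $\log\|Df|_{\Ec(x)}\|>\lam^c(p)-\gamma$ for all $x\in\Lam$, so by continuity $\log\|Df|_{\Ec(x)}\|>\lam^c(p)-2\gamma$ on a neighbourhood $V$ of $\Lam$. Next I would feed $p$ (in the stable role) and $q$ (in the centre role) into Lemma~\ref{lem:heteroclinic}, obtaining $x_n\in\Fs(p)$ and $y_n\in\Fu(x_n)\cap\Fc_+(q)$ with $d_{\Fc}(y_n,q)\to0$, with $d_{\Fu}(x_n,y_n)\le C_2 D_n^{-\beta}$ for $D_n=d_{\Fs}(p,x_n)\to\infty$, and with $d_{\Fc}(f^k(y_n),f^k(q))\le\eta$ for all $k\ge0$ once $n$ is large (for any prescribed $\eta$). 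Passing to a subsequence so that $y_{n+1}\in(q,y_n)^c$, Proposition~\ref{prop:c-continuity} then applies with base point $y_1$ and limit $q$: fixing $z\in\Fc(y_1)\cap\Lam$ with $h(z)\ne h(y_1)$, I get $\delta>0$ with $d_{\Fc}(y_n,z_n)\ge\delta$ for all $n$, where $z_n=h^{us}_{y_1,y_n}(z)\in\Fc(y_n)\cap\Lam$.

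The core step, following Proposition~4.1 of \cite{gs20XXrigidity}, would be a distortion estimate along the centre leaf $\Fc(y_n)=\Fc(z_n)$. Since $h$ carries $\Fus$-holonomies inside $\Lam$ to $\Aus$-holonomies (Proposition~\ref{prop:su-lamination}(4)) and the latter are isometries between linear centre leaves, the linear centre distance $d_{\Ac}(h(y_n),h(z_n))$ equals a constant $\delta_1>0$ independent of $n$, hence $d_{\Ac}(h(f^k y_n),h(f^k z_n))=e^{k\lam^c(A)}\delta_1$ for all $k$; in particular the $h$-images of $y_n$ and $z_n$ converge, so $z_n$ also shadows the orbit of $q$ in the centre. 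Thus for $n$ large and $k$ past a transient time $T_n$ the segment $f^k([y_n,z_n]^c)$ lies in the $\eta$-tube about the orbit of $q$, hence in $V$, where by Lemma~\ref{lem:remetric} it contracts no faster than $e^{\lam^c(p)-2\gamma}$ per step while, near that orbit, its length shrinks essentially at the rate $\lam^c(q)$. Reconciling the fixed lower bound $d_{\Fc}(y_n,z_n)\ge\delta$, the exact rate $\lam^c(A)$ on the images, and the bi-H\"older comparison of Proposition~\ref{prop:su-Holder} on the two distinct $us$-leaves through $y_n$ and $z_n$ — and bounding the transient window $T_n$ via the polynomial rate $d_{\Fu}(x_n,y_n)\le C_2 D_n^{-\beta}$ and the balance constant $\theta\in(0,1)$ — should force an inequality of the form $\lam^c(q)\le\lam^c(p)+c\gamma$ with $c$ independent of $\gamma$. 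Since $\lam^c(p)<\lam^-+\gamma$ and $\gamma>0$ is arbitrary, letting $\gamma\to0$ yields $\lam^c(q)\le\lam^-$.

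The hard part will be exactly that $h$ need not be a homeomorphism: it may collapse centre segments, it is only bi-H\"older (not bi-Lipschitz) along $u$-, $s$- and $us$-leaves, and $E^c$ is not uniformly contracted over all of $\Lam$. One therefore cannot conjugate the germ of $f$ at $q$ to the germ at $p$ by a smooth change of coordinates; the delicate work is to run the length bookkeeping only inside the tube about the orbit of $q$, where the uniform bound of Lemma~\ref{lem:remetric} is available, to show that the uncontrolled heteroclinic transition costs only an error that is absorbed by the exponent $\theta<1$ together with the polynomial decay $D_n^{-\beta}$, and to keep the comparison of $f$-distances with $A$-distances honest using solely the H\"older control on $us$-leaves from Proposition~\ref{prop:su-Holder}.
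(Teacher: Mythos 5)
Your overall strategy mirrors the paper's: reduce to showing every periodic exponent in $\Lam$ meets $\lam^-$, feed $p$ from \cref{lem:remetric} and the target $q$ into \cref{lem:heteroclinic}, use \cref{prop:c-continuity} for a uniform lower bound on a transported centre gap, and balance the heteroclinic transition against $\theta\in(0,1)$. That is exactly the skeleton of the paper's proof, which itself follows Proposition~4.1 of \cite{gs20XXrigidity}. However, the way you close the argument has a genuine gap, for two related reasons.

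First, the step ``the $h$-images of $y_n$ and $z_n$ converge, so $z_n$ also shadows the orbit of $q$ in the centre'' is a non sequitur. From $d_{\Ac}(h(f^k y_n),h(f^k z_n))=e^{k\lam^c(A)}\delta_1\to0$ you learn only that the \emph{linear} images approach each other; since $h$ may collapse centre segments, $f^k(z_n)$ need not approach the orbit of $q$ at all, and so $f^k([y_n,z_n]^c)$ need not enter an $\eta$-tube about that orbit. (You flag this issue yourself in your last paragraph, but the argument as written relies on exactly the implication you concede is suspect.) The paper avoids this entirely: instead of trying to prove the transported segment shadows $q$, it runs a pointwise dichotomy (``either the segment already has length $>\eta_0$, or the one-step expansion rate is close to $\|Df|_{\Ec(f^{k-1}(q))}\|$'') so that no shadowing of the full segment is ever needed.

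Second, and more fundamentally, your bookkeeping uses a single centre segment $[y_n,z_n]^c$ and only produces \emph{lower} bounds on how fast it contracts (via \cref{lem:remetric} and the rate near $q$); there is nothing to compare those lower bounds against, so no inequality of the form $\lam^c(q)\le\lam^c(p)+c\gamma$ falls out. The mechanism the paper actually uses is a \emph{two-segment comparison}: the reference segment $[p,z]^c$ with $z\in\Fc(p)$ chosen in the centre-stable manifold of $p$ (possible since $\lam^c(p)<0$), contracting at rate $\le\lam^c(p)+\delta_0$, is carried by the $\Fs$-holonomy in $\Fcs(p)$ to $[x_n,w_n]^c$; after $m_nn_0$ iterates the point $f^{m_nn_0}(x_n)$ returns to within stable distance $1$ of $p$, so the $C^{1+}$ regularity of stable holonomy inside a centre-stable leaf (Theorem~B of \cite{PSW}) caps the ratio of the two centre lengths. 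Meanwhile the lower bound on $d_{\Fc}(f^{m_nn_0}(x_n),f^{m_nn_0}(w_n))$, obtained by splitting the orbit into a $q$-shadowing block of proportion $>\theta$ and a block controlled by \cref{lem:remetric}, makes that ratio blow up whenever $\lam^c(q)>\lam^c(p)$. It is precisely this ``return along the stable leaf and compare via $C^{1+}$ stable holonomy'' step that your sketch omits, and \cref{prop:su-Holder} cannot substitute for it: that proposition gives bi-H\"older control of $h$ only along $u$-, $s$- and $us$-leaves, not along centre leaves, so it provides no handle on $d_{\Fc}$ at all. Without the stable-holonomy upper bound, the inequality you are trying to ``force'' does not materialize.
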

\begin{proof}
	Lemma \ref{lem:per-exp} proved that $\lam^c(p)\leq 0$ for every $p\in \Per(f|_\Lam)$. Proposition \ref{prop:ergper} and \ref{prop:lamneg} proved that there exists a sequence of periodic points $q_n\in\Per(f|_{\Lam})$, such that $\lim_{n\rightarrow\infty}\lam^c(q_n)=\lam^-\leq\lam^c(A)<0$.
	
	Assume there exists two periodic points $r,q\in\Per(f|_{\Lam})$ such that $\lam^c(r)<\lam^c(q)\leq0$. Denote
	\begin{align}\label{delta0}
	     \delta_0=\frac{\theta}{4}\left(\lam^c(q)-\lam^c(r)\right)>0.
	\end{align} 
	From Lemma \ref{lem:remetric}, there exists a periodic point $p\in\Per(f|_{\Lam})$, such that
	\begin{align}\label{per-p}
	    \lam^c(p)\leq\lam^c(r), \qquad {\rm and} \qquad
	    \lam^c(p)<\log\|Df|_{E^c(x)}\|+\delta_0, 
	    \quad \forall x\in\Lam.
	\end{align}
	Denote $n_0$ be the minimal common period of $p$ and $q$.
	 
	Let $\eta_0>0$, such that for every $z_1,z_2\in\bbT^3$ satisfying $d(z_1,z_2)\leq 3\eta_0$, we have
	$$
	\left| \log\|Df|_{E^c(z_1)}\|-\log\|Df|_{E^c(z_2)}\|\right| <\delta_0.
	$$
	Since $p\in\Per(f|_{\Lam})$ and $\lam^c(p)<0$, there exists $z\in\Fc(p)\cap\Lam$, such that 
	$$
	h(z)\neq h(p), \quad {\rm and} \quad
	d_{\Fc}\big(f^k(p),f^k(z)\big)\leq\eta_0, \qquad \forall k\geq 0.
	$$
	We assume $x\in\Fc_+(p)$.
	
	We apply Lemma \ref{lem:heteroclinic} to $p$ and $q$, there exists $x_n\in\Fs(p)$, $y_n\in\Fu(x_n)$ with $y_n\in\Fc(q)$ such that
	$$
	\lim_{n\rightarrow\infty}d_{\Fc}(y_n,q)=0,  
	\quad {\it and} \quad
	d_{\Fu}(x_n,y_n)\leq\frac{C_2}{D_n^{\beta}},
	\quad {\it where} \quad
	D_n=d_{\Fs}(p,x_n).
	$$
	By taking subsequence, we can assume $y_{n+1}\in(q,y_n)^c$. Moreover, there exists $N_0>0$, such that for every $n>N_0$, $y_n$ satisfies
	$$
	d_{\Fc}\big(f^k(y_n),f^k(q)\big)\leq\eta_0,
	\qquad \forall k\geq 0.
	$$

	Denote $h^{us}_{p,y_n}:\Fc(p)\rightarrow\Fc(y_n)$ the holonomy map induced by $\Fus$ in $\Lam$ from $p$ to $y_n$, and $z_n=h^{us}_{p,y_n}(z)$. Proposition \ref{prop:c-continuity} shows that there exists $\delta_z>0$, such that
	$$
	d_{\Fc}(y_n,z_n)\geq\delta_z.
	$$

	\begin{figure}[htbp]
		\centering
		\includegraphics[width=10cm]{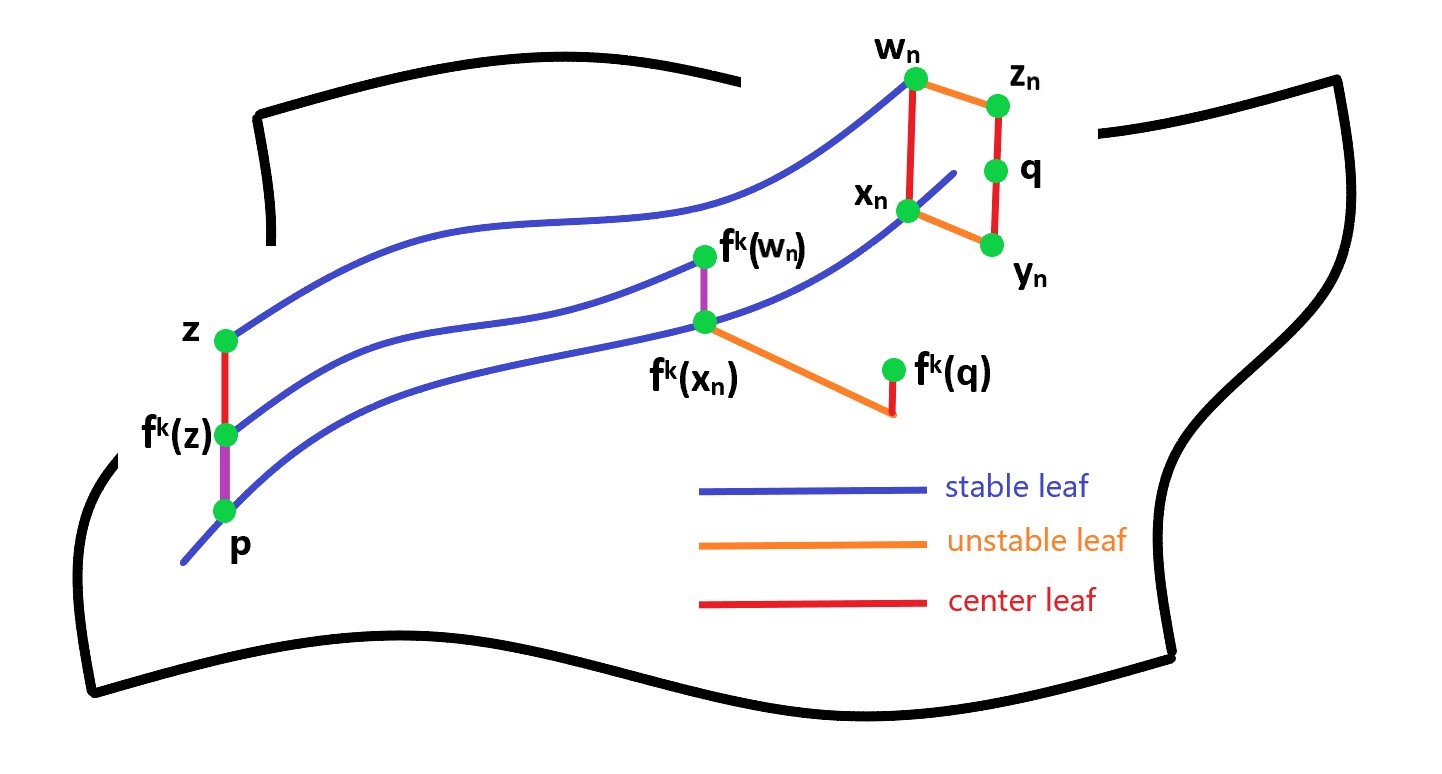}
		\caption{Holonomy maps of stable foliations}
	\end{figure}

	\begin{claim}
		If we denote $h^s_{p,x_n}:\Fc(p)\rightarrow\Fc(x_n)$ the holonomy map induced by $\Fs$ in $\Fcs(p)$ from $p$ to $x_n$, and $w_n=h^s_{p,x_n}(z)$, then there exists $N_1>0$, such that for every $n>N_1$, it satisfies
		$$
		d_{\Fc}(x_n,w_n)\geq\delta_z/2.
		$$
	\end{claim}
	
	\begin{proof}[Proof of the claim]
        Consider the holonomy map $h^u_{y_n,x_n}:\Fc(y_n)\rightarrow\Fc(x_n)$ induced by $\Fu$ in $\Fcu(q)$ for $y_n$ to $x_n$. We denote $w_n=h^u_{y_n,x_n}(x_n)$. From Theorem B of \cite{PSW}, $h^u_{y_n,x_n}$ is $C^{1+}$ continuous. Moreover, since $d_{\Fu}(x_n,y_n)\rightarrow0$ as $n\rightarrow\infty$, the derivative of $h^u_{y_n,x_n}$ converges to $1$ as $n\rightarrow\infty$. So  there exists $N_1>0$, such that for every $n>N_1$, it satisfies 
		$$
		d_{\Fc}(x_n,w_n)\geq\delta_z/2.
		$$
	\end{proof}

	For every $n$, we denote 
	\begin{itemize}
		\item $m_n$ be the smallest positive integer where $d_{\Fs}\big(p,f^{m_nn_0}(x_n)\big)\leq1$.
		\item $k_n\in[0,m_n]$ be the largest positive integer satisfying
		$$
		d_{\Fu}\big(f^{kn_0}(x_n),f^{kn_0}(y_n)\big)\leq\eta_0
		$$ 
		for every $0\leq k\leq k_n$.
	\end{itemize}
    \begin{claim}
    	There exists $N_2>0$, such that for every $n>N_2$, we have
    	$$
    	\frac{k_n}{m_n}>\theta.
    	$$
    \end{claim}
    \begin{proof}[Proof of the claim]
    	From the definition of $m_n$ and $k_n$, they satisfy
    	\begin{itemize}
    		\item $m_n$ satisfies $D_n\cdot \left(\sup_{x\in\mathbb{T}^3}\|Df|_{E^s(x)}\|\right)^{(m_n-1)n_0}> 1$, this implies
    		$$
    		m_n<-\frac{\log D_n}{n_0\cdot\log\left[ \sup_{x\in\mathbb{T}^3}\|Df|_{E^s(x)}\|\right]}+1.
    		$$
    		\item $k_n$ satisfies $(C_2/D_n^{\beta})\cdot\left(\inf_{x\in\mathbb{T}^3}m(Df|_{E^u(x)})\right)^{k_nn_0}<\eta_0$, this implies
    		$$
    		k_n> \frac{\beta\cdot\log D_n+\log\eta_0-\log C_2}{n_0\cdot\log\left[ \inf_{x\in\mathbb{T}^3}m(Df|_{E^u(x)})\right] }.
    		$$
    	\end{itemize}
    	Since $D_n\rightarrow+\infty$ as $n\rightarrow\infty$, we also have $m_n\rightarrow\infty$ as $n\rightarrow\infty$. Thus for 
    	$$
    	\theta=-\frac{1}{2}\cdot\frac{\beta\cdot\log\left( \sup_{x\in\mathbb{T}^3}\|Df|_{E^s(x)}\|\right) }{\log\left( \inf_{x\in\mathbb{T}^3}m(Df|_{E^u(x)})\right) },
    	$$
    	there exists $N_2>0$, such that if $n> N$, then
    	$k_n/m_n>\theta$.
    \end{proof}

	\begin{claim}
		For every $n>\max\{N_0,N_1,N_2\}$, we have
		$$
		d_{\Fc}\big(f^{m_nn_0}(z),p\big)\leq
		\exp\left[m_nn_0\cdot(\lam^c(p)+\delta_0)\right],
		$$
		and
		$$
		d_{\Fc}\big(f^{m_nn_0}(x_n),f^{m_nn_0}(w_n)\big)>
		\exp\left[m_nn_0\cdot\left(\lam^c(p)+2\delta_0\right) \right] 
		\cdot\min\left\lbrace \eta_0,\delta_z/2\right\rbrace.
		$$
	\end{claim}
    \begin{proof}[Proof of the claim]
    	Now we let $n>\max\{N_0,N_1,N_2\}$.
    	The length of $(m_nn_0)$-iteration of the segment $[p,z]^c$ satisfies
    	$$
    	d_{\Fc}\big(f^{m_nn_0}(z),p\big)\leq
    	\exp\left[m_nn_0\cdot(\lam^c(p)+\delta_0)\right].
    	$$
    	
    	On the other hand, since $d(x_n,q)\leq 2\eta_0$, for every $1\leq k\leq k_nn_0$, either 
    	$$
    	d_{\Fc}\big(f^{k-1}(x_n),f^{k-1}(w_n)\big)>\eta_0;
    	$$
    	or
    	$$
    	d_{\Fc}\big(f^k(x_n),f^k(w_n)\big)>
    	\|Df|_{E^c(f^{k-1}(q))}\|\cdot
    	\exp(-\delta_0)\cdot d_{\Fc}(f^{k-1}(x_n),f^{k-1}(w_n)).
    	$$
    	Recall that $d_{\Fc}(x_n,w_n)>\delta_z/2$, this implies that
    	$$
    	d_{\Fc}\big(f^{k_nn_0}(x_n),f^{k_nn_0}(w_n)\big)>
    	\exp\left[k_nn_0\cdot (\lam^c(q)-\delta_0) \right] 
    	\cdot\min\left\lbrace \eta_0,\delta_z/2\right\rbrace .
    	$$
    	
    	\vskip2mm
    	
    	If $k_n=m_n$, then the definition of $\delta_0$ (\ref{delta0}) and the fact $\lambda^c(p)\leq\lambda^c(r)$ (\ref{per-p}) imply
    	$$
    	d_{\Fc}\big(f^{m_nn_0}(x_n),f^{m_nn_0}(w_n)\big)>
    	\exp\left[m_nn_0\cdot\left(\lam^c(p)+2\delta_0\right) \right] 
    	\cdot\min\left\lbrace \eta_0,\delta_z/2\right\rbrace.
    	$$
    	We already proved the claim. 
    	
    	Otherwise, for every $k_nn_0<k\leq m_nn_0$, we have
        \begin{itemize}
        	\item either
        	$$
        	d_{\Fc}\big(f^{k-1}(x_n),f^{k-1}(w_n)\big)>\eta_0;
        	$$
        	\item or $d_{\Fc}\big(f^{k-1}(x_n),f^{k-1}(w_n)\big)
        	\leq\eta_0$, this implies
        	\begin{align*}
        	d_{\Fc}&\big(f^k(x_n),f^k(w_n)\big) \\
        	&>\|Df|_{E^c(f^{k-1}(x_n))}\|\cdot
        	\exp(-\delta_0)\cdot d_{\Fc}\big(f^{k-1}(x_n),f^{k-1}(w_n)\big)
        	\end{align*}
        	Due to the choice of periodic point $p$ in (\ref{per-p}), we have 
        	$$
        	\lam^c(p)<\log\|Df|_{E^c(f^{k-1}(x_n))}\|+\delta_0,
        	$$ 
        	thus 
        	$$
        	\qquad d_{\Fc}\big(f^k(x_n),f^k(w_n)\big)>
        	\exp\left(\lam^c(p)-2\delta_0\right) \cdot d_{\Fc}(f^{k-1}(x_n),f^{k-1}(w_n)).
        	$$
        	
        \end{itemize}
        
      Both cases imply
    	\begin{align*}
    	&d_{\Fc}\big(f^{m_nn_0}(x_n),f^{m_nn_0}(w_n)\big) \\
    	&>\exp\left[k_nn_0\cdot (\lam^c(q)-\delta_0)+ (m_n-k_n)n_0\cdot(\lam^c(p)-2\delta_0)\right] 
    	\cdot\min\left\lbrace \eta_0,\delta_z/2\right\rbrace \\
    	&>\exp\left[m_nn_0\cdot\left(\theta\lam^c(q)+(1-\theta)\lam^c(p)-2\delta_0\right) \right] 
    	\cdot\min\left\lbrace \eta_0,\delta_z/2\right\rbrace
    	\end{align*}
    	Since $\delta_0=\frac{\theta}{4}\left(\lam^c(q)-\lam^c(r)\right)$, we have
    	$$
    	d_{\Fc}\big(f^{m_nn_0}(x_n),f^{m_nn_0}(w_n)\big)>
    	\exp\left[m_nn_0\cdot\left(\lam^c(p)+2\delta_0\right) \right] 
    	\cdot\min\left\lbrace \eta_0,\delta_z/2\right\rbrace.
    	$$
    	This proves the claim.
    \end{proof}

    Since $m_n\rightarrow+\infty$ as $n\rightarrow\infty$, this claim implies
    $$
    \lim_{n\rightarrow\infty}
    \frac{d_{\Fc}\big(f^{m_nn_0}(x_n),f^{m_nn_0}(w_n)\big)}{d_{\Fc}\big(f^{m_nn_0}(z),p\big)}\longrightarrow+\infty
    $$
	as $n\rightarrow\infty$. 
	
    However, $d_{\Fs}(p,f^{m_nn_0}(x_n))\leq 1$, this contradicts that the holonomy maps stable foliation in a center-stable leaf is $C^{1+}$, see Theorem B of \cite{PSW}. This proves
	$$
	\lam^c(p)=\lam^c(q), \qquad \forall p,q\in\Per(f|_{\Lam}).
	$$
\end{proof}

\begin{cor} \label{cor:unifhyp}
	$\Lam$ is a uniformly hyperbolic attractor.
\end{cor}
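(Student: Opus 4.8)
The plan is to upgrade the information about periodic orbits in \cref{prop:samelyap} to \emph{uniform} contraction of the center bundle over all of $\Lam$, and then to read off hyperbolicity and the attractor property from this together with the $us$-saturation of $\Lam$.

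First, the splitting $E^s\oplus E^c\oplus E^u$ is dominated along $\Lam$, with $E^s$ uniformly contracted and $E^u$ uniformly expanded because $f$ is partially hyperbolic; so it suffices to show that $E^c$ is uniformly contracted over $\Lam$, since then $E^s\oplus E^c$ is uniformly contracted and $\Lam$ is a hyperbolic set with stable bundle $E^s\oplus E^c$ and unstable bundle $E^u$. As $E^c$ is one-dimensional and continuous, the standard criterion for uniform contraction of a continuous invariant subbundle (if every invariant measure has only negative Lyapunov exponents along the subbundle, then the subbundle is uniformly contracted) reduces matters to checking that $\lam^c(\mu)<0$ for every ergodic $f$-invariant measure $\mu$ supported on $\Lam$; equivalently, that $\Lam$ carries no ergodic measure of non-negative center exponent.

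To exclude such measures, recall that by \cref{prop:samelyap} together with \cref{lem:per-exp} every periodic orbit in $\Lam$ has center Lyapunov exponent exactly $\lam^-\le\lam^c(A)<0$. If $\mu$ were an ergodic measure on $\Lam$ with $\lam^c(\mu)>0$, then $\mu$ is hyperbolic, and Katok's closing lemma in Pesin theory produces a periodic point $q$ with $\lam^c(q)$ arbitrarily close to $\lam^c(\mu)$, hence positive; arguing as in the proof of \cref{prop:ergper} --- the Pesin unstable manifold of $q$ transversally meets the strong stable leaf $\Fs(x)\subset\Lam$ of a $\mu$-typical point $x$, and backward iterates of the intersection point, which lies in the compact invariant set $\Lam$, converge to $q$ --- we get $q\in\Lam$, contradicting \cref{lem:per-exp}. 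The borderline case $\lam^c(\mu)=0$ is excluded by a Pliss selecting-times argument: along a two-sided $\mu$-typical orbit there is a positive-density set of times at which the center contracts at rate close to $0$ while $E^s$ contracts and $E^u$ expands uniformly, and matching a forward-selected with a backward-selected time through a recurrence yields a periodic orbit, contained in $\Lam$ thanks to the $us$-saturation of $\Lam$, whose center exponent is arbitrarily close to $0$ and hence different from $\lam^-$, again contradicting \cref{prop:samelyap}. Thus $\lam^c(\mu)<0$ for every ergodic $\mu$ on $\Lam$, so $E^c$, and therefore $E^s\oplus E^c$, is uniformly contracted and $\Lam$ is a hyperbolic set.

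Finally, for the attractor property: $\Lam$ is $us$-saturated by \cref{prop:su-lamination}(1), so $\Fu(x)\subset\Lam$ for every $x\in\Lam$; since $\Lam$ is hyperbolic and $E^u$ is one-dimensional, $\Fu(x)$ coincides with the unstable manifold $\Wu(x)$ of the hyperbolic set at $x$, and hence $\Lam=\bigcup_{x\in\Lam}\Wu(x)$ is a union of full unstable manifolds. A hyperbolic set with this property is locally maximal and is a uniformly hyperbolic attractor: there is a compact neighborhood $U\supset\Lam$ with $f(U)\subset\ior U$ and $\bigcap_{n\ge0}f^n(U)=\Lam$. The main obstacle is the middle step of ruling out ergodic measures with non-negative center exponent on $\Lam$; the positive-exponent case is a routine application of Katok's closing lemma, while the zero-exponent case requires the selecting-times construction, whose delicate point is ensuring that the periodic orbit it produces actually lies in $\Lam$ --- which is precisely what the $us$-saturation of $\Lam$ provides.
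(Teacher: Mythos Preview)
Your overall framework --- reduce to showing every ergodic measure on $\Lam$ has $\lam^c(\mu) < 0$, then invoke the standard ``no non-negative exponents implies uniform contraction'' criterion for a continuous one-dimensional bundle --- is sound, and your treatment of the case $\lam^c(\mu) > 0$ (dualizing the argument of \cref{prop:ergper}: the Pesin $cu$-manifold of the Katok periodic point meets $\Fs(x)\subset\Lam$, and backward iterates stay in $\Lam$) as well as of the attractor property is correct. The paper, however, takes a different route: it first shows that $\Lam$ coincides with the homoclinic class $H(p)$ of any $p\in\Per(f|_\Lam)$, using the DA-specific structure (a periodic $p\in\Lam$ satisfies $p=p_+$ or $p=p_-$, which forces one half of $\Fc(p)$ into $W^s(p)$; together with global product structure this makes all periodic points in $\Lam$ pairwise homoclinically related, while a periodic $r\notin\Lam$ lies strictly inside its collapsed interval and cannot be homoclinically related to $r_\pm\in\Lam$). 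Once $\Lam=H(p)$, the paper simply invokes the main theorem of \cite{bgy2009hyperbolicity}.

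The genuine gap in your argument is the case $\lam^c(\mu)=0$. The Liao/Pliss selecting-times closing does produce a periodic orbit with center exponent close to $0$, but that shadowing uses the local $cs$- and $cu$-plaques at the selected times, not the strong leaves: the periodic point $q$ arises as an intersection of a local $\Fcs$-disk with a local $\Fcu$-disk. Mere $us$-saturation of $\Lam$ does \emph{not} place $q$ in $\Lam$, because the center coordinate of $q$ can differ from that of the $\mu$-typical point and there is no reason $\Fu(q)$ or $\Fs(q)$ should meet $\Lam$. Passing from ``all periodic exponents in $\Lam$ equal $\lam^-<0$'' to ``no zero-exponent measure on $\Lam$'' is exactly the content of \cite{bgy2009hyperbolicity}, and that proof genuinely uses the homoclinic-class structure to keep the Liao-shadowed periodic orbits inside the set. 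So to close your argument you would in effect have to establish that $\Lam$ is a homoclinic class --- which is precisely the paper's first step --- after which citing \cite{bgy2009hyperbolicity} is more economical than redoing the selecting-times construction by hand.
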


\begin{proof}
	For every $p\in\Per(f|_{\Lam})$, we have $p=p_+$ or $p=p_-$. If $p=p_+$, then $\Fc_+(p)$ is contained in the stable manifold of $p$. If $p=p_-$, then $\Fc_+(p)$ is contained in the stable manifold of $p$. From the global product structure of $\Fcs$ and $\Fu$, this implies every pair of periodic points $p,q\in\Lam$ are homoclinic related. Moreover, every periodic point $r\notin\Lam$ satisfies $r\neq r_+$ and $r\neq r_-$. This implies its stable manifold
	$$
	W^s(r)\subset\bigcup_{x\in(r_-,r_+)^c}\Fs(x).
	$$ 
	This implies
	$$
	W^s(r)\cap\Fus(r_-)=\emptyset, 
	\qquad {\rm and} \qquad
	W^s(r)\cap\Fus(r_+)=\emptyset.
	$$
	Thus $r$ is not homoclinic related to $r_+$ and $r_-$. For every  $p\in\Per(f|_{\Lam})$, its homoclinic class $H(p)\subseteq\Lam$.  Proposition \ref{prop:su-lamination} shows that $\Lam=\overline{\Per(f|_{\Lam})}$, this implies 
	$$
	\Lam=H(p), \qquad \forall p\in\Per(f|_{\Lam}).
	$$
	
    Proposition \ref{prop:samelyap} shows that $\lam^c(p)\leq\lam^c(A)<0$ for every $p\in\Per(f|_{\Lam})$. We apply the Main Theorem of \cite{bgy2009hyperbolicity}, which shows that $\Lam$ is a hyperbolic set. Moreover, the unstable manifold 
	$$
	W^u(x)=\Fu(x)\subset\Lam, 
	\qquad \forall x\in\Lam.
	$$
	This implies $\Lam$ is a hyperbolic attractor.
\end{proof}

The following proposition finishes the proof of Theorem \ref{thm:main}.

\begin{prop}\label{prop:whole-mfd}
    The hyperbolic attractor $\Lam$ is the whole of $\bbT^3$ and so $f$ is an Anosov diffeomorphism.
\end{prop}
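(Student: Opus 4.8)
The plan is to prove that the stable set $W^s(\Lam)$ is all of $\bbT^3$, and then use the attractor structure of $\Lam$ to exclude any point outside $\Lam$.

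\smallskip

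First I would record the fact — essentially contained in the proof of \cref{cor:unifhyp} — that for every $x \in \Lam$ the whole center leaf $\Fc(x)$ lies in $W^s(\Lam)$: the semiconjugacy carries $\Fc(x)$ into the linear center leaf $\Ac(h(x))$, $A$ contracts the linear center direction, and since $h$ is a bounded distance from the identity and center leaves are quasi-isometrically embedded one gets $d_{\Fc}(f^n x, f^n y) \to 0$, hence $d(f^n y,\Lam)\to 0$, for every $y \in \Fc(x)$. Next I would observe $h(\Lam) = \bbT^3$: $\Lam$ contains $\Fus(x)$ (indeed $\overline{\Fu(p)}$ for a periodic $p$, since $\Lam = H(p)$), and by \cref{prop:su-lamination}(4) the image of a $us$-leaf of $f$ is the corresponding linear $us$-leaf of $A$, which is dense in $\bbT^3$; so the compact set $h(\Lam)$ is dense, hence everything. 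Consequently, for any $y \in \bbT^3$ choose $x \in \Lam$ with $h(x) = h(y)$; by \cref{cor:interval} the fibre $h^{-1}(h(y))$ is either a point or a center arc, so $x$ and $y$ lie on a common center leaf, whence $y \in \Fc(x) \subseteq W^s(\Lam)$. Therefore $W^s(\Lam) = \bbT^3$, i.e.\ the forward orbit of every point of $\bbT^3$ converges to $\Lam$.

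\smallskip

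Now suppose, for contradiction, that $\Lam \ne \bbT^3$. The open invariant set $\bbT^3 \setminus \Lam$ can contain no minimal set, since a minimal set consists of recurrent points while every point of $\bbT^3$ has its $\omega$-limit set inside $\Lam$. Hence for any $y \notin \Lam$ the $\alpha$-limit set $\alpha(y)$ must meet $\Lam$ (otherwise it would be a nonempty compact invariant subset of $\bbT^3 \setminus \Lam$ and would contain a minimal set). Fix $z \in \alpha(y) \cap \Lam$ and a trapping neighbourhood $U$ of the attractor $\Lam$ with $f(\bar U) \subseteq U$ and $\bigcap_{n\ge 0} f^n(U) = \Lam$. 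The backward orbit of $y$ accumulates on $z \in U$, and since $f(U) \subseteq U$ it follows that $f^{-m}(y) \in U$ for all $m \ge 0$; invariance of $\alpha(y)$ then forces $\alpha(y) \subseteq \bigcap_{n \ge 0} f^n(\bar U) = \Lam$, so $d(f^{-m}(y),\Lam) \to 0$, i.e.\ $y \in W^u(\Lam)$. But $\Lam$ is a locally maximal hyperbolic set with $W^u(x) = \Fu(x) \subseteq \Lam$ for every $x \in \Lam$, so $W^u(\Lam) = \Lam$, contradicting $y \notin \Lam$. Hence $\Lam = \bbT^3$, and a hyperbolic set equal to the whole manifold is Anosov.

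\smallskip

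The soft dynamical steps are routine once $W^s(\Lam) = \bbT^3$ is available, so the main point to get right is exactly that the center-saturation of $\Lam$ is everything; this hinges on $h(\Lam) = \bbT^3$ together with the torus version of \cref{cor:interval} and the homeomorphism property of $h$ on $us$-leaves, so I would make sure those are firmly in hand first. I would also carefully verify the standard structural facts invoked — that a hyperbolic set containing all of its unstable manifolds admits an open trapping region, is locally maximal, and satisfies $W^u(\Lam)=\Lam$ — none hard, but all needed for the final contradiction.
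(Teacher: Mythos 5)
Your final contradiction argument (backward orbit enters the trapping region, hence $y\in\bigcap_{n\ge0}f^n(U)=\Lam$) is sound, and in fact once you have $f^{-m}(y)\in U$ for all $m\ge0$ you can conclude directly, without needing the characterization $W^u(\Lam)=\bigcup_{x\in\Lam}W^u(x)$. The gap is earlier, in the claim $W^s(\Lam)=\bbT^3$, and it is a real one.

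You argue that for $x\in\Lam$ and $y\in\Fc(x)$, since $A$ contracts $\Ac$ and $h$ is at bounded distance from the identity with quasi-isometric center leaves, one gets $d_{\Fc}(f^n x,f^n y)\to 0$. This inference is invalid: the hypotheses only give that $\|h(f^nx)-h(f^ny)\|\to 0$ (or $=0$ if $h(x)=h(y)$), and since $h$ is at bounded distance $K$ from the identity this yields $\|f^nx-f^ny\|\le 2K+o(1)$, hence by quasi-isometry $d_{\Fc}(f^nx,f^ny)\le Q(2K)+Q+o(1)$ — \emph{bounded}, not tending to zero. The distortion of $h$ along center leaves is exactly what one does not control. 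Worse, the conclusion itself is doubtful in the hypothetical regime $\Lam\ne\bbT^3$: if $v=h(x)$ is $A$-periodic of period $\pi$ and $h\inv(v)$ is a nontrivial arc $[x_-,x_+]^c$, this arc is $f^{\pi}$-invariant with $x_\pm$ attracting in the center direction, but the interior may contain $f^{\pi}$-fixed points (indeed \cref{cor:unifhyp} explicitly contemplates periodic $r\notin\Lam$ with $r\in(r_-,r_+)^c$), and such interior points need not lie in $W^s(\Lam)$. That is precisely why the proof of \cref{cor:unifhyp} claims only that the half-leaf $\Fc_+(p)$ exterior to the fiber lies in $W^s(p)$, not the whole leaf.

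The paper's proof avoids this issue entirely: rather than trying to pull every point into the basin, it picks a point $z$ in the interior of a carefully chosen fiber so that (i) $z$ stays a definite distance from $\Lam$ under backward iteration (using the trapping neighbourhood of the attractor), while (ii) the $h$-images of the balls $B_\ep(f^{-k}(z))$ land in pairwise disjoint unit balls in linear $us$-leaves (choosing $z'$ on an $A$-$us$-leaf that escapes to infinity under $A^{-1}$). Disjointness of those $h$-images forces the balls $B_\ep(f^{-k}(z))$ themselves to be pairwise disjoint, and since they all have volume bounded below this is impossible. So the paper uses the attractor structure and the semiconjugacy to produce a volume contradiction, not to show $W^s(\Lam)=\bbT^3$. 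To fix your argument you would need to show that the lengths of the fibers $h\inv(A^n v)$ tend to zero for \emph{every} $v$, which is not available here and fails in particular along $A$-periodic orbits.
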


\begin{proof}
	We have $\Lam=\bbT^3$ if $x=h\inv(h(x))$ for every $x\in\bbT^3$. Assume there exists $x\in\bbT^3$ such that $x\neq h\inv(h(x))$. Then for every $y\in\Aus(h(x))$, $h\inv(y)$ is a non-trivial center segment.
	
	\begin{claim}
		There exists $z'\in\Aus(h(x))$, such that
		the set
		$$
		B^{us}_{1}(z')=\left\lbrace y'\in\Aus(z'):d_{\Aus}(y',z')\leq 1\right\rbrace 
		$$
		satisfies
		$$
		A^{-k}\left(B^{us}_{1}(z')\right)\cap B^{us}_{1}(z')=\emptyset,
		\qquad \forall k>0.
		$$
	\end{claim}
    \begin{proof}[Proof of the claim]
    	There are two possibilities, either $\Aus(h(x))$ contains no periodic points of $A$, or $\Aus(h(x))$ contains a periodic point.
    	
    	If $\Aus(h(x))$ contains no periodic points of $A$, then
    	$$
    	A^{-k}(\Aus(h(x)))\cap\Aus(h(x))=\emptyset,
    	\qquad \forall k>0
    	$$ 
    	Since $B^{us}_{1}(h(x))\subset\Aus(h(x))$, we only need to choose $z'=h(x)$.
    	
    	If $\Aus(h(x))$ contains a periodic point $p$ with period $\pi$, then $A^{\pi}:\Aus(h(x))\rightarrow \Aus(h(x))$ is a linear Anosov action on the plane. So there exists $z'\in\As(p)\setminus\{p\}\subset\Aus(x)$ sufficiently far from $p$ in $\As(p)$, which satisfies
    	$$
    	A^{-k\pi}\left(B^{us}_{1}(z')\right)\cap B^{us}_{1}(z')=\emptyset,
    	\qquad \forall k>0.
    	$$  
    	This implies
    	$A^{-k}\left(B^{us}_{1}(z')\right)\cap B^{us}_{1}(z')=\emptyset$ for every $k>0$
    \end{proof}
    
    Let $z$ be the center point of the non-trivial segment $h^{-1}(z')$, and $\delta>0$ satisfying 
    $$
    B_{3\delta}(z)\cap\Lam=\emptyset.
    $$ 
    The semiconjugacy $h:\bbT^3\rightarrow\bbT^3$ is a continuous map. 
    There exists $\ep_0>0$, such that if $d(x_1,x_2)<\ep_0$ and $h(x_2)\in\cA^{us}(h(x_1))$, then $d_{us}(h(x_1),h(x_2))<1$. Here $d_{us}(\cdot,\cdot)$ is the distance in each leaf of $\cA^{us}$.
    
	Since $\Lam$ is a hyperbolic attractor, there exists a constant $0<\ep<\min\{\delta,\ep_0\}$, such that the $\ep$-neighborhood $B_{\ep}(\Lam)$ satisfies
	$$
	f^k\left(  B_{\ep}(\Lam) \right) \subset B_{\delta}(\Lam),
	\qquad \forall k>0.
	$$
	This implies
	$$
	f^{-k}\left( B_{\delta}(z) \right)\cap B_{\ep}(\Lam)=\emptyset,
	\qquad \forall k>0.
	$$
	Otherwise, we have $B_{\delta}(z)\cap B_{\delta}(\Lam)\neq\emptyset$, which contradicts to $B_{3\delta}(z)\cap\Lam=\emptyset$. 
	Thus for every $k>0$, we have
	$$
	B_{\ep}(f^{-k}(z))\cap\Lam=\emptyset, 
	\qquad {\rm and} \qquad
	h\left( B_{\ep}(f^{-k}(z)) \right)\subset \cA^{us}(A^{-k}(z')).
	$$

	Moveover, since $0<\ep<\ep_0$, for every $y\in B_{\ep}(f^{-k}(z))$, we have 
	$$
	d_{us}\left(h(y),A^{-k}(z')\right)<1.
	$$
    This implies
	$$
	h\left( B_{\ep}(f^{-k}(z)) \right)\subset B^{us}_1(A^{-k}(z')),
	\qquad \forall k>0.
	$$
	Since $A^{-k}\left(B^{us}_{1}(z')\right)\cap B^{us}_{1}(z')=\emptyset$ for every $k>0$, the semiconjugacy property implies the sequence of balls 
	$$
	\left\lbrace B_{\ep}(f^{-k}(z)): k>0 \right\rbrace 
	$$
	are mutually disjoint. This is absurd since the volume of each $B_{\ep}(f^{-k}(z))$ has a lower bound. Thus $h$ is injective everywhere and $\Lam=\bbT^3$.
\end{proof}

\bigskip

\noindent\textbf{Acknowledgements.}
This research was partially funded by the Australian Research Council.
Y. Shi is supported by NSFC 11701015, 11831001.



\bibliographystyle{alpha}
\bibliography{dynamics}

\end{document}